\UseRawInputEncoding
\documentclass[12pt,a4paper,reqno]{amsart}
\usepackage{cite}
\usepackage[margin=2.4cm]{geometry}
\setlength{\textheight}{10in}

\usepackage{graphics, amsmath,enumitem, comment}
\usepackage{amsthm}
\usepackage{amssymb}
\usepackage{bbm}
\usepackage{mathrsfs}
\RequirePackage{color}
\RequirePackage[breaklinks]{hyperref}

\RequirePackage[usenames,dvipsnames]{xcolor}
\RequirePackage{pgf, tikz}
\usepackage{epsfig}
\def\bc{\begin{center}}
\def\ec{\end{center}}
\def\be{\begin{equation}}
\def\ee{\end{equation}}

\def\P{\mathbb P}
\def\G{\mathcal G}

\def\Q{\mathcal Q}
\def\R{\mathbb R}

\def\B{\mathcal B}
\DeclareMathOperator{\diam}{diam}
\numberwithin{equation}{section}
\newtheorem{lem}{Lemma}[section]
\newtheorem{dfn}{Definition}[section]
\newtheorem{pro}{Proposition}[section]
\newtheorem{thm}{Theorem}[section]

\newtheorem{cor}{Corollary}[section]
\theoremstyle{remark}
\newtheorem{rem}{\bf Remark}[section]

\begin{document}

\title{Large intersection property for limsup sets in metric space}
\author[Zhang-nan Hu]{Zhang-nan Hu}
\address{Zhang-nan Hu, School of Mathematics, South China University of Technology, Guangzhou, 510641, China}
\email{hnlgdxhzn@163.com}
\author{Bing Li}
\address{Bing Li, School of Mathematics, South China University of Technology, Guangzhou, 510641, China}
\email{scbingli@scut.edu.cn}
\author[Linqi Yang]{Linqi Yang*}\thanks{* Corresponding author}
\address{Linqi Yang, School of Mathematics, South China University of Technology, Guangzhou, 510641, China}
\email{2981502134@qq.com}

\begin{abstract} 
 We show that limsup sets generated by a sequence of open sets in compact Ahlfors $s$-regular space $(X,\mathscr{B},\mu,\rho)$ belong to the classes of sets with large intersections with index $\lambda$, denoted by $\mathcal{G}^{\lambda}(X)$, under some conditions. In particular, this provides a lower bound on Hausdorff dimension of such sets. These results are applied to obtain that limsup random fractals with indices $\gamma_2$ and $\delta$ belong to $\mathcal{G}^{s-\delta-\gamma_2}(X)$ almost surely, and random covering sets with exponentially mixing property belong to $\mathcal{G}^{s_0}(X)$ almost surely, where $s_0$ equals to the corresponding Hausdorff dimension of covering sets almost surely. We also investigate the large intersection property of limsup sets generated by rectangles in metric space.
\end{abstract}

\keywords{limsup sets; large intersection property; metric space; limsup random fractals; random covering sets}

\maketitle

\section{Introduction}

Sets with large intersection were introduced by Falconer in \cite{fal94}. Given $s\in(0,d]$, he defined $\G^s(\R^d)$ to be the class of all $G_{\delta}$ sets $F$ in  $\R^d$ such that  
\[\dim_{\rm H}\bigcap_{n=1}^{\infty} f_n(F) \ge s\]
holds for all sequences of similarity transformations $\{f_n\}_{n\ge1}$, where $\dim_{\rm H}$ denotes the Hausdorff dimension. It is known that $\G^s(\R^d)$ is the maximal class of $G_{\delta}$ sets satisfying (i) Hausdorff
dimension larger than $s$, and (ii) closed under similarity transformations and countable intersections. The packing dimension of sets in $\G^s(\R^d)$ is $d$ from the fact that they are dense $G_{\delta}$ sets. 
In \cite{fal94}, Falconer also gave several equivalent definitions and established various properties of $\G^s(\R^d)$. 
In 2004, Bugeaud \cite{bu04} generalized the class for more general gauge functions. 
Later, Negreira and Sequeira \cite{ns} extended the class of sets in Euclidean space with large intersection property to metric space endowed with doubling measure.

There are many applications of large intersection property. Lots of mathematicians applied this property for estimating Hausdorff dimension from below in the study of  Diophantine approximations, and refer to \cite{bu04}, \cite{ding}, \cite{fal94}, \cite{ns} for more details. 
 In 2007, Durand \cite{du07} investigated the size and large intersection properties of limsup sets generated by homogeneous ubiquitous systems in $\R^d$. In 2010, Durand \cite{du10} showed that random covering sets $\limsup\limits_{n\to\infty}B(\xi_n,r_n)$ are sets with large intersection, where $\{\xi_n\}_{n\ge1}$ are independent and uniformly distributed random variables on the circle. 
 In 2019, Persson \cite{per19} proved that dynamical covering sets $\limsup\limits_{n\to\infty}B(T^nx,n^{-\alpha})$ has large intersection property for dynamical systems $(T,\mu)$ which have summable decay of correlations. 
 Ding \cite{ding} showed that under a full Hausdorff measure assumption, the limsup sets generated by rectangles with some conditions in compact metric space are sets with large intersection. 
 In 2021, Persson \cite{per21} considered various sequences of open sets with general shapes, proved the corresponding limsup sets  have  large intersection properties, and obtained the lower bound on the Hausdorff dimension.  
 Aubry and Jaffard \cite{aj} noticed that large intersection property also occurred in probability theory, such as the multifractal analysis of random wavelet series. See also  \cite{dd, du09, fp} for more study in fractals, dynamical systems and the multifractal analysis of other stochastic processes.

Limsup sets, the upper limits of sequences of sets,  play an important role in many areas, such as random covering problem, shrinking target problem, the study of Brownian motion and so on. 
Motivated by the study on sets with large intersections, we are interested in the large intersection properties  of limsup sets generated by open sets in metric spaces $(X,\mathscr{B},\mu,\rho)$ and those applications.

\begin{dfn}\label{ar}
A Borel measure $\mu$ on metric space $(X,\rho)$ is 
Ahlfors $s$-regular $(0< s < \infty)$ if there exists a constant $1\le C <\infty$ such that 
\begin{equation}\label{eqah}
C^{-1}r^s\le\mu\big(B(x,r)\big)\le Cr^s
\end{equation} 
holds for all $x\in X$ and $0<r\le\diam X$, where $\diam X$ is the diameter of $X$. Here $B(x,r)=\{y\in X\colon \rho(x,y)<r\}$ is an open ball with centre $x$ and radius $r$. A space $(X,\mathscr{B},\mu,\rho)$ is said to be Ahlfors $s$-regular if $\mu$ satisfies formula (\ref{eqah}).
\end{dfn}

In this paper, we consider a probability space $(X,\mathscr{B},\mu,\rho)$ where $\mu$ is Ahlfors $s$-regular ($0<s<\infty$). For $B=B(x,r)$, we adopt the convention that $cB=B(x,cr)$, 
$B^t=B(x,r^{t/s})$ and $B=\varnothing$ if $r=0$.
\begin{dfn}\label{ene}
For $0\le t\le s$, the $t$-potential at $y\in X$ of the measure $\mu$ is defined as
\[\phi_t(\mu,y)=\int_X \rho(x,y)^{-t}d\mu(x).\]
The $t$-energy of the measure $\mu$ is defined as
\[I_t(\mu)=\int_X\int_X \rho(x , y)^{-t}d\mu(x)d\mu(y).\]
\end{dfn}
Denote
\[ \phi_t(\mu,U,y)=\int_U\rho(x,y)^{-t}d\mu(x),\]
\[I_t(\mu, U)=\int_U\int_U \rho(x , y)^{-t}d\mu(x)d\mu(y)\]
for $U\in \mathscr{B}$. 

\begin{thm}\label{main}
Let $(X,\mathscr{B},\mu,\rho)$ be a compact Ahlfors $s$-regular space $(0<s<\infty)$, and 
 $\{B_n\}_{n\ge1}$ be a sequence of balls in $X$ with $\diam B_n$ decreasing to 0 as $n\to\infty$. For $n\ge1$, $E_n$ is an open subset of $B_n$ and let
\[ \lambda=\sup\Big\{t\ge0\colon \sup_{n\ge1}\frac{I_t(\mu,E_n)\mu(B_n)}{\mu(E_n)^2}<\infty\Big\}.\]
Then $\mu\Big(\limsup\limits_{n\to\infty}B_n\Big)=1$ implies that $ \limsup\limits_{n\to\infty}E_n\in\mathcal{G}^{\lambda}(X).$
\end{thm}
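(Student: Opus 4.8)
The plan is to reduce the conclusion to a local estimate on Riesz energies, to prove that estimate by an explicit Cantor construction inside each ball, and then to quote the description of the classes $\mathcal{G}^{t}(X)$ that precedes this statement (in the spirit of \cite{ns}). First, $\limsup_{n\to\infty}E_n=\bigcap_{N\ge1}\bigcup_{n\ge N}E_n$ is a countable intersection of open sets, hence a $G_\delta$ set, so only the ``large everywhere'' part of membership is at issue. It therefore suffices to prove: for every ball $B\subseteq X$ and every $t<\lambda$ there is a Borel probability measure $\nu=\nu_{B,t}$ supported on $\limsup_{n\to\infty}E_n\cap B$ with
\[
I_t(\nu)\le c_t\,(\diam B)^{-t},
\]
where $c_t$ is independent of $B$; since then $\dfrac{I_t(\nu)\mu(B)}{\nu(B)^2}\le c_t(\diam B)^{-t}\mu(B)\asymp c_t(\diam B)^{s-t}\le c_t(\diam X)^{s-t}$, this family of measures is exactly what the characterisation of $\mathcal{G}^{\lambda}(X)$ requires on every ball (the uniformity in $B$ being what turns ``positive $t$-content everywhere'' into the content condition in the definition). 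To build $\nu_{B,t}$ one fixes an auxiliary exponent $t'\in(t,\lambda)$, so that $A:=\sup_{n\ge1}I_{t'}(\mu,E_n)\mu(B_n)/\mu(E_n)^2<\infty$ is available.

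The geometric input is that $\mu(\limsup_nB_n)=1=\mu(X)$ forces, for \emph{every} $N$ and every open set $V$ of positive measure, $\mu\big(V\setminus\bigcup_{n\ge N}B_n\big)=0$; since moreover $\diam B_n\to0$ (whence $\mu(B_n)\asymp(\diam B_n)^s\to0$), suitable balls $B_n$ lie inside any prescribed open subset of $V$ once $n$ is large. Together with the $5r$-covering lemma in the Ahlfors $s$-regular space this lets one build a Cantor-type set $K\subseteq\limsup_nE_n\cap B$ level by level: level $0$ is $\{B\}$; given pairwise disjoint, pairwise well-separated balls at level $k$, each inside a level-$(k-1)$ open set $E_{n'}$ with $E_{n'}\ne\varnothing$, inside every such $E_n$ one selects finitely many pairwise disjoint and well-separated balls $B_m\subseteq E_n$ from the sequence, of comparable and arbitrarily small radius $r_{k+1}$, with $\sum_m\mu(B_m)\ge\theta\,\mu(E_n)$ for a fixed $\theta=\theta(s)>0$, and these together with their open subsets $E_m$ form level $k+1$; one also requires, when possible, that the chosen $B_m$ have $\mu(E_m)$ not too small relative to $\mu(B_m)$. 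Letting the radii $r_k\downarrow0$ fast enough makes the multiplicative losses incurred per step harmless. Then $K=\bigcap_k\bigcup(\text{level-}k\text{ balls})$ is a compact subset of $B$, every point of which lies in infinitely many $E_n$, and $\nu=\nu_{B,t}$ is the mass distribution on $K$ obtained by splitting, at each stage, the mass of $E_n$ among its children $E_m$ proportionally to $\mu(B_m)$ and spreading it within $E_m$ as $\mu|_{E_m}/\mu(E_m)$.

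To bound $I_t(\nu)$ one decomposes the double integral according to the last level $k$ at which the two points lie in a common cell $E_n^{(k)}$; for that level the points lie in distinct children of $E_n^{(k)}$, so their separation together with Ahlfors regularity bounds the corresponding partial energy by $c\,\theta^{-2}\,\nu(E_n^{(k)})^2\mu(E_n^{(k)})^{-2}\,I_t(\mu,E_n^{(k)})$, and here $I_t(\mu,E_n^{(k)})\le I_{t'}(\mu,E_n^{(k)})\le A\,\mu(E_n^{(k)})^2/\mu(B_n^{(k)})$ by the hypothesis (using $\diam X\le1$ for the first inequality, achieved by rescaling $\rho$). Summing over the level-$k$ cells and then over $k$, and using $\mu(B_n^{(k)})\asymp r_k^s$ together with the rapid decay of $r_k$ chosen so as to dominate the geometrically growing per-level constants, yields $I_t(\nu)\le c_t(\diam B)^{-t}$ with $c_t=c_t(s,t,t',C,A)$, in particular independent of $B$. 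Finally, letting $t\uparrow\lambda$, running $B$ over all balls, and invoking the characterisation of $\mathcal{G}^{\lambda}(X)$ (with, if the adopted definition demands the full content condition rather than a uniform positive lower bound, the standard self-improvement of the content) gives $\limsup_nE_n\in\mathcal{G}^{\lambda}(X)$.

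I expect the main obstacle to be exactly the energy bookkeeping of the last paragraph: one must set up the Cantor scheme — the rate of decay of the scales $r_k$, the comparability and separation of radii within a level, the normalisation of $\nu$, and the selection of children with $\mu(E_m)$ not too degenerate — so that neither the per-level Vitali constant $\theta^{-1}$ nor the a priori unbounded ratios $\mu(B_n)/\mu(E_n)$ accumulate across generations and so that the resulting series for $I_t(\nu)$ genuinely converges; this is the reason for introducing the intermediate exponent $t'\in(t,\lambda)$ and applying the hypothesis there. Securing a bound on $I_t(\nu)\mu(B)$ that is uniform over all balls $B$, rather than merely a Hausdorff-dimension lower bound for $\limsup_nE_n$, is what upgrades the conclusion to membership in $\mathcal{G}^{\lambda}(X)$.
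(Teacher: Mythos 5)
Your overall strategy (Frostman-type measures with controlled $t$-energy on Cantor subsets of $\limsup_n E_n$ inside every ball) is workable in spirit, but the decisive step --- the energy bookkeeping you yourself flag --- has a genuine gap as written. Once you apply the hypothesis in the form $I_t(\mu,E^{(k)})\le I_{t'}(\mu,E^{(k)})\le A\,\mu(E^{(k)})^2/\mu(B^{(k)})$ (discarding, via $\diam X\le 1$, any quantitative gain from $t'>t$), the level-$k$ contribution to $I_t(\nu)$ is of order $A\theta^{-2}\sum_{E}\nu(E)^2/\mu(B_E)$, summed over level-$k$ cells. This quantity does not decay however fast the scales $r_k$ decrease; it compounds the degeneracy ratios $\mu(B_n)/\mu(E_n)$, which the hypothesis does not bound. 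Concretely, take $X=[0,1]$, $\mu$ Lebesgue, and $E_n$ the ball concentric with $B_n$ of radius $r_n^{a}$ with $a>1$ (so $\lambda=1/a<s$): in your scheme, with mass split proportionally to $\mu(B_m)$ and a Vitali fraction $\theta$ captured per cell, the level-$k$ sum is comparable (up to powers of $\theta^{-1}$) to $A\prod_{j<k}r_j^{-(a-1)}$, which diverges, and making the $r_j$ decay faster makes it larger, not smaller. So ``rapid decay of $r_k$ dominating geometrically growing per-level constants'' is not available with the inequality you wrote: the growth is not geometric but dictated by the data $\mu(B_n)/\mu(E_n)$, and your fallback of choosing children with $\mu(E_m)$ not too small relative to $\mu(B_m)$ ``when possible'' is precisely not possible in the interesting regime $\lambda<s$. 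The construction can be repaired, but only by keeping a scale-dependent gain, e.g. $I_t(\mu,E^{(k)})\le(\diam E^{(k)})^{t'-t}I_{t'}(\mu,E^{(k)})\le C r_k^{t'-t}A\,\mu(E^{(k)})^2/\mu(B^{(k)})$, and then choosing the level-$k$ scale adaptively after the level-$(k-1)$ cells (hence the finite quantity $\max_E\nu(E)/\mu(E)$) are fixed, so that $r_k^{t'-t}$ beats it; that is the role the intermediate exponent $t'$ must play, and it is absent from your argument.

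Two further points. First, your reduction asserts that a probability measure on $\limsup_nE_n\cap B$ with $I_t(\nu)\le c_t(\diam B)^{-t}$, uniformly over balls $B$, is ``exactly what the characterisation of $\mathcal{G}^{\lambda}(X)$ requires''; no such energy characterisation is among those used here --- membership is defined via the net contents $\mathscr{M}_{\infty}^{t'}$ over generalized dyadic cubes (Definition 2.4, Theorem 2.2) --- so you still owe the (standard, but not stated) mass-distribution step converting the energy bound into $\mathscr{M}_{\infty}^{t'}(\limsup_nE_n\cap Q)\ge c\,\mu(Q)^{t'/s}$ uniformly in $Q$, and you should also ensure the chosen children lie compactly inside the parent cells so that points of the Cantor set lie in the open sets $E_n$ themselves and not merely in their closures. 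Second, note that the paper's own proof avoids the multilevel compounding entirely: it disjointifies blocks $\bigcup_{m=n}^{N_n}B_m$ by the $5r$-covering lemma, builds for each $n$ a single absolutely continuous measure $\mu_n$ on $\bigcup_{m\in\mathcal{I}_n}E_m$ uniformly comparable to $\mu$ on balls, bounds $I_t(\mu_n)$ using the hypothesis only once per index (cross terms controlled by disjointness and $I_t(\mu)<\infty$), and then passes to net contents via potential-normalized measures, so the ratios $\mu(B_m)/\mu(E_m)$ never multiply across generations.
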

\begin{cor}\label{main1}
Under the setting in Theorem \ref{main}, if $\mu\Big(\limsup\limits_{n\to\infty}B_n\Big)=1$, then we have $\dim_{\rm H}\Big(\limsup\limits_{n\to\infty}E_n\Big)\ge \lambda$ and $\dim_{\rm P}\Big(\limsup\limits_{n\to\infty}E_n\Big)=s$, where $\dim_{\rm P}$ denotes the packing dimension.
\end{cor}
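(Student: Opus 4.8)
The plan is to derive Corollary~\ref{main1} from Theorem~\ref{main} together with the structural properties of the class $\mathcal{G}^{\lambda}(X)$. Write $F=\limsup_{n\to\infty}E_{n}$. Under the hypotheses of the corollary, Theorem~\ref{main} gives $F\in\mathcal{G}^{\lambda}(X)$, so everything reduces to two general facts about the class: (a) every member of $\mathcal{G}^{\lambda}(X)$ has Hausdorff dimension at least $\lambda$; and (b) every member of $\mathcal{G}^{\lambda}(X)$ has packing dimension exactly $s$.

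Fact (a) is essentially part of how the class is set up, in parallel with \cite{fal94,ns}: a set in $\mathcal{G}^{\lambda}(X)$ is a $G_{\delta}$ set carrying positive (indeed full) $\lambda$-dimensional Hausdorff content on every open set, so in particular $\mathcal{H}^{\lambda}(F)\ge\mathcal{H}^{\lambda}_{\infty}(F)>0$ (Hausdorff content), whence $\dim_{\rm H}F\ge\lambda$. So the first assertion of the corollary is immediate once $F\in\mathcal{G}^{\lambda}(X)$ is known.

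For Fact (b), the upper bound $\dim_{\rm P}F\le s$ is automatic from Ahlfors $s$-regularity: covering $X$ by balls of radius $\varepsilon$ requires $\asymp\varepsilon^{-s}$ of them (each has $\mu$-measure $\le C\varepsilon^{s}$ while $\mu(X)=1$, and a maximal $\varepsilon$-separated set has at most $\lesssim\varepsilon^{-s}$ points), so $\overline{\dim}_{\rm B}X=s$ and hence $\dim_{\rm P}Y\le\overline{\dim}_{\rm B}X=s$ for every $Y\subseteq X$. For the lower bound I would use that members of $\mathcal{G}^{\lambda}(X)$ are \emph{dense} $G_{\delta}$ sets (the content condition on every open set prevents $F$ from avoiding any ball); since $X$ is compact, hence a Baire space, $F$ is comeagre. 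Given any decomposition $F=\bigcup_{i\ge1}F_{i}$, not all $\overline{F_{i}}$ can be nowhere dense, so some $\overline{F_{i_{0}}}$ contains a ball $B(x_{0},r_{0})$. Upper box dimension is unchanged under taking closures, and the same volume-counting as above gives $\overline{\dim}_{\rm B}B(x_{0},r_{0})=s$; hence $\overline{\dim}_{\rm B}F_{i_{0}}=\overline{\dim}_{\rm B}\,\overline{F_{i_{0}}}\ge s$. Thus $\sup_{i}\overline{\dim}_{\rm B}F_{i}\ge s$ for every decomposition, so by the definition of packing dimension $\dim_{\rm P}F\ge s$, and equality follows.

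The corollary itself therefore presents no real obstacle beyond quoting the right properties of $\mathcal{G}^{\lambda}(X)$; the substantive work lies in Theorem~\ref{main}, whose proof I expect to proceed by fixing a ball $B$ and an exponent $t<\lambda$, passing to the subfamily of those $B_{n}$ contained in $B$ (whose union still has full $\mu$-measure in $B$, since $\mu(\limsup_{n}B_{n})=1$ and $\diam B_{n}\to0$), converting the uniform bound $I_{t}(\mu,E_{n})\,\mu(B_{n})\lesssim\mu(E_{n})^{2}$ into a uniform lower bound of the shape $\mathcal{M}^{t}_{\infty}(E_{n})\gtrsim\mu(B_{n})$ via a Frostman/energy estimate, and then running a mass-transference-type covering argument to show $\limsup_{n}E_{n}$ has full $t$-content in $B$. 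The quasi-independence step in that last argument is where the difficulty sits, but it is not needed for the corollary.
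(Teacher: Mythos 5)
Your proposal is correct and follows essentially the route the paper intends (the paper never writes out a proof of this corollary): Theorem \ref{main} gives membership in $\mathcal{G}^{\lambda}(X)$, the Hausdorff bound is exactly Proposition \ref{pro}\,(4), and the packing-dimension claim is the dense-$G_{\delta}$/Baire plus box-counting argument the introduction alludes to. Your only (harmless) imprecision is asserting positive $\lambda$-content itself, whereas the definition only yields positive $t'$-content for every $t'<\lambda$, which still gives $\dim_{\rm H}\ge\lambda$.
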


When $\{E_n\}_{n\ge1}$ is a sequence of balls, an important tool in determining the Hausdorff dimension of $\limsup\limits_{n\to\infty}E_n$ is the Mass Transference Principle, which was established by Beresnevich and Velani \cite{bv06}.
\begin{thm}[Mass Transference Principle \cite{bv06}]\label{mtp}
Let $(X,\mathscr{B},\mu,\rho)$ be a locally compact Ahlfors $s$-regular space $(0<s<\infty)$,
Let $\{E_n\}_{n\ge1}$ be a sequence of balls in $X$ with $\diam E_n\to0$ as $n\to\infty$. 
Let $t>0$ and suppose that 
\[\mathcal{H}^s(\limsup_{n\to\infty}E_n^{t})=\mathcal{H}^s(X).\]
Then, 
\[\mathcal{H}^t(\limsup_{n\to\infty}E_n)=\mathcal{H}^t(X).\]
Here $\mathcal{H}^t (F)$ denotes the Hausdorff $t$ -measure of a set $F\subset X$. 
\end{thm}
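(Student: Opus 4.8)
The plan is to run the Cantor-set-plus-mass-distribution scheme of Beresnevich and Velani, since we need the sharp full-measure conclusion $\mathcal{H}^t(\limsup E_n)=\mathcal{H}^t(X)$ rather than merely a dimension lower bound; a potential-theoretic/energy argument (which is what Theorem \ref{main} encodes) would only locate the critical exponent and cannot certify fullness of $\mathcal{H}^t$. First I would record the two facts that make the hypothesis usable: Ahlfors $s$-regularity gives $\mathcal{H}^s\asymp\mu$ on balls, so the hypothesis $\mathcal{H}^s(\limsup E_n^t)=\mathcal{H}^s(X)$ is equivalent to $\mu(\limsup E_n^t)$ being full; and $\mathcal{H}^t_\infty(B)\asymp r^t$ for a ball $B$ of radius $r$. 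We may assume $0<t<s$: for $t>s$ regularity forces $\mathcal{H}^t(X)=0$ and the conclusion is vacuous, while $t=s$ gives $E_n^t=E_n$ and the conclusion is the hypothesis itself. Since $\mathcal{H}^t(\limsup E_n)\le\mathcal{H}^t(X)$ is automatic, all the content is the reverse bound, and I would localize: it suffices to prove that for \emph{every} ball $B\subset X$ one has $\mathcal{H}^t_\infty(\limsup E_n\cap B)\ge c\,r(B)^t$ with $c>0$ independent of $B$. A standard zero--full dichotomy for $\limsup$ sets (a $\limsup$ set whose $t$-content is uniformly bounded below in proportion to the content of every ball has full $\mathcal{H}^t$-measure) then upgrades this local bound to $\mathcal{H}^t(\limsup E_n)=\mathcal{H}^t(X)$.

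The heart is the local construction. Fix $B$. Exploiting that the blown-up balls $E_n^t=B(x_n,r_n^{t/s})$ form a $\mu$-full $\limsup$ set, I would extract inside $B$ a finite pairwise-disjoint subcollection of the $E_n$ whose blow-ups $E_n^t$ lie in $B$ and together capture a fixed proportion $\kappa$ of $\mu(B)$; this is a Vitali/$5r$-covering selection combined with the full-measure hypothesis (the ``KGB'' step). Iterating the selection inside each chosen ball, always with indices $n$ pushed to infinity as the depth grows, produces a nested family of finite unions of balls whose intersection $\mathbf K$ is a compact subset of $B$; since every point of $\mathbf K$ lies in infinitely many $E_n$, we obtain $\mathbf K\subseteq\limsup E_n\cap B$. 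On $\mathbf K$ I would place the natural mass distribution $\mu_{\mathbf K}$, assigning to each selected child $E_n$ a share of its parent's mass proportional to $r_n^t\asymp\mathcal{H}^t_\infty(E_n)$, normalized over its siblings, and normalizing the total mass to $\asymp r(B)^t$.

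The main obstacle, and exactly the place where the exponent drops from $s$ to $t$, is the uniform Hölder estimate $\mu_{\mathbf K}(A)\lesssim(\diam A)^t$ required to run the Mass Distribution Principle. The subtlety is that the selected children $E_n$ have $\mu$-measure $\asymp r_n^s$ but are \emph{spaced} according to their blow-ups $E_n^t$ of radius $r_n^{t/s}\gg r_n$; when a generic small ball $A$ is tested against the Cantor tree, the number of children of a given generation that $A$ can meet, combined with the normalization by $r_n^t$, conspires to yield exponent $t$ rather than $s$. Making this precise means separately handling the regimes $\diam A\ge r_n^{t/s}$ (where $A$ meets few disjoint blow-ups, hence few children) and $\diam A<r_n^{t/s}$ (where one descends into a single child and recurses), while tracking the product of the per-generation proportions $\kappa$. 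I expect this bookkeeping --- choosing the generation radii so the proportions multiply correctly and the blow-up spacing dominates --- to be the technical crux. Once $\mu_{\mathbf K}(A)\lesssim(\diam A)^t$ is established, the Mass Distribution Principle gives $\mathcal{H}^t_\infty(\mathbf K)\gtrsim\mu_{\mathbf K}(\mathbf K)\asymp r(B)^t$, which is the desired local bound and completes the proof.
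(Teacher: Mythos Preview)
Your outline is a faithful sketch of the original Beresnevich--Velani argument (the ``KGB'' selection lemma, the nested Cantor construction inside an arbitrary ball, the mass distribution weighted by $r_n^t$, and the H\"older estimate via the separation furnished by the blow-ups $E_n^t$), and the identification of the case analysis $\diam A\gtrless r_n^{t/s}$ as the crux is accurate.

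However, there is nothing to compare against in this paper: Theorem~\ref{mtp} is quoted from \cite{bv06} as background and is not proved here. The paper's own contribution is Theorem~\ref{main} and Corollary~\ref{mainn}, which use the energy/potential method (Lemmas~\ref{leme}--\ref{lemi}) to obtain large intersection and hence the dimension lower bound $\dim_{\rm H}(\limsup E_n)\ge t$; the authors explicitly remark that the full-measure conclusion $\mathcal H^t(\limsup E_n)=\mathcal H^t(X)$ is the Beresnevich--Velani result, and they invoke it rather than reprove it. So your proposal is not wrong, but it addresses a theorem the paper merely cites.
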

For an Ahlfors $s$-regular space $(X,\mathscr{B},\mu,\rho)$, Heinonen \cite[Section 8.7]{Hein01} proved that $C'^{-1}\mathcal{H}^s\le \mu\le C'\mathcal{H}^s$, where $C'\ge1$ is a constant.  
Then by Theorem \ref{mtp}, given $t\in(0,s]$, we can deduce that $\dim_{\rm H}\Big(\limsup\limits_{n\to\infty}E_n\Big)\ge t$,  if $\mu\Big(\limsup\limits_{n\to\infty}E_n^{t}\Big)=1$. 
Such estimation can be derived by Corollary \ref{main1} when $X$ is compact. In addition, as the following corollary shown, $\limsup\limits_{n\to\infty}E_n$ also has large intersection property.

\begin{cor}\label{mainn}
Let $(X,\mathscr{B},\mu,\rho)$ be a compact Ahlfors $s$-regular space $(0<s<\infty)$, and  $\{B_n\}_{n\ge1}$ be a sequence of balls with $\diam B_n$ decreasing to 0 as $n\to\infty$.  For $t\in(0,s]$, if $\mu\Big(\limsup\limits_{n\to\infty}B_n^t\Big)=1$, then $ \limsup\limits_{n\to\infty}B_n\in\mathcal{G}^{t}(X).$
\end{cor}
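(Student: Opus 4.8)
The plan is to realise $\limsup_n B_n$ as the limsup set of open \emph{subsets} of slightly larger balls, and then invoke Theorem \ref{main}. Write $B_n=B(x_n,r_n)$. A set of diameter $\delta$ has $\mu$-measure at most $2^sC\delta^s$ by (\ref{eqah}) (cover it by a ball of radius $2\delta$ centred at one of its points), so $C^{-1}r_n^s\le\mu(B_n)\le 2^sC(\diam B_n)^s$, which forces $r_n\to 0$. Hence for all large $n$ we have $0<r_n<1$, and since $t/s\le 1$ this gives $r_n\le r_n^{t/s}$, i.e. $B_n\subseteq B_n^t=:\widehat B_n$, with $\diam\widehat B_n\le 2r_n^{t/s}\to 0$. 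Discard the finitely many indices for which this inclusion fails (this alters neither $\limsup_n B_n$ nor $\limsup_n\widehat B_n$) and relabel the remaining pairs $(B_n,\widehat B_n)$ so that $\diam\widehat B_n$ is non-increasing; a permutation of the index set leaves set-theoretic upper limits unchanged and preserves the inclusions $B_n\subseteq\widehat B_n$. We may therefore apply Theorem \ref{main} to the sequence of balls $\{\widehat B_n\}$ with the open sets $E_n:=B_n\subseteq\widehat B_n$.

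It remains to check that the index $\lambda$ produced by Theorem \ref{main} for this data satisfies $\lambda\ge t$. Fix $u$ with $0\le u<s$. For $y\in B_n$ we have $B_n\subseteq B(y,2r_n)$, and decomposing $B(y,2r_n)$ into dyadic annuli and using (\ref{eqah}) gives
\[
\phi_u(\mu,B_n,y)\le\sum_{k\ge0}\big(2^{-k}r_n\big)^{-u}\,\mu\!\big(B(y,2^{1-k}r_n)\big)\le C_{u,s}\,r_n^{\,s-u},
\]
where $C_{u,s}$ depends only on $u$, $s$ and the constant in (\ref{eqah}). Integrating in $y$ over $B_n$ and using $\mu(B_n)\le Cr_n^s$ yields $I_u(\mu,B_n)\le C\,C_{u,s}\,r_n^{\,2s-u}$, while $\mu(\widehat B_n)\le C(r_n^{t/s})^s=Cr_n^{\,t}$ and $\mu(B_n)^2\ge C^{-2}r_n^{\,2s}$. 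Consequently
\[
\frac{I_u(\mu,B_n)\,\mu(\widehat B_n)}{\mu(B_n)^2}\le C^{4}C_{u,s}\,r_n^{\,t-u},
\]
which, since $0<r_n<1$, is at most $C^{4}C_{u,s}$ whenever $u\le t$. Thus the set defining $\lambda$ in Theorem \ref{main} contains $[0,t]$ when $t<s$ and $[0,s)$ when $t=s$; in either case $\lambda\ge t$.

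By hypothesis $\mu\big(\limsup_n\widehat B_n\big)=\mu\big(\limsup_n B_n^t\big)=1$, so Theorem \ref{main} gives $\limsup_n B_n=\limsup_n E_n\in\mathcal{G}^{\lambda}(X)$. Since the classes of sets with large intersection are nested (if $\lambda'\le\lambda$ then $\mathcal{G}^{\lambda}(X)\subseteq\mathcal{G}^{\lambda'}(X)$, immediate from the definition) and $\lambda\ge t$, we conclude $\limsup_n B_n\in\mathcal{G}^{t}(X)$, as required. The only genuinely substantive step is the uniform energy bound above; everything else is the bookkeeping needed to present the inflated balls in the precise form demanded by Theorem \ref{main}, the key point being that $\{B_n^t\}$ should play the role of the ambient balls and $\{B_n\}$ that of the sets $E_n$.
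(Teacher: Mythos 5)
Your proposal is correct and follows essentially the same route as the paper: both apply Theorem \ref{main} with the enlarged balls $B_n^t$ in the role of the ambient balls and the original balls $B_n$ as the open subsets $E_n$, and both verify the energy-ratio condition $I_u(\mu,B_n)\mu(B_n^t)/\mu(B_n)^2\lesssim r_n^{t-u}$ (the paper via Lemma \ref{leme}, you by re-deriving the same annulus estimate). The only cosmetic differences are that the paper shows $\lambda=t$ exactly rather than $\lambda\ge t$ plus nesting, and your extra bookkeeping (discarding finitely many indices, reordering) to meet the monotone-diameter hypothesis literally.
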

Applying Theorem \ref{main} and Corollary \ref{mainn}, we prove that limsup random fractals, random covering sets and limsup sets generated by rectangles have large intersection property (Theorems 4.1, 4.7 and 4.8).

The rest of this paper is organized as follows. In Section 2 we give a brief review of the class of sets with large intersection
properties. In Sections 3, we give the proofs of  Theorem \ref{main} and Corollary \ref{mainn} which are our main results. In the last section, there are some examples. We apply our results to the study of the large intersection properties of  limsup random fractals under some conditions and random covering sets $\limsup\limits_{n\to\infty}B(\xi_n,r_n)$, where the centers $\{\xi_n\}_{n\ge1}$ are uniformly distributed and exponentially mixing random variables, are sets with large intersection properties. We also show limsup sets generated by rectangles have large intersection property.

\section{Preliminaries}
In this section, we refer $(X,\rho)$ to general metric space, and $\mu$ is a Borel measure on $X$. Denote $B(x,r)=\{y\in X\colon \rho(x,y)<r\}$ and $\overline{B}(x,r)=\{y\in X\colon \rho(x,y)\le r\}$.
\subsection{Generalized dyadic cubes in metric spaces}
\begin{dfn}\label{fd}
A metric space $(X, \rho)$ has the finite doubling property if any closed ball $\overline{B}(x, 2r) \subset X$ may be covered by finitely many closed balls of radius $r$. 
Furthermore, such a space is doubling if there exists $N\in\mathbb{N}$ independent of $x$ and $r$ such that $\overline{B}(x, 2r)$ can be covered by at most $N$ balls of radius $r$.
\end{dfn}

Letting $(X, \rho)$ be a metric space with the finite doubling property,  K\"aenm\"aki, Rajala and Suomala \cite{Krs} showed that there exists a nesting family of “cubes” which are similar as the dyadic cubes of Euclidean spaces.

\begin{thm}[\cite{Krs}]\label{thm:nest}
Let $(X,\rho)$ be a metric space with the finite doubling property and let $0<b<\frac{1}{3}$ be a constant. Then there exists a collection 
$\{Q_{n,i}\colon n\in\mathbb{Z}, i\in\mathbb{N}_n\subset \mathbb{N}\}$ of Borel sets that have the following properties:
\begin{enumerate}
	\item $X=\bigcup_{i\in\mathbb{N}_n}Q_{n,i}$ for every $n\in\mathbb{Z}$.
		\item $Q_{n,i}\cap Q_{m,j}=\varnothing $ or $Q_{n,i}\subset Q_{m,j}$, where $n,m\in \mathbb{Z}$, $n\ge m$, 
		$i\in \mathbb{N}_n $ and $j\in \mathbb{N}_m$.
		\item For every $n\in \mathbb{Z}$ and $i\in\mathbb{N}_n$, there exists a point $x_{n,i}\in X$ such that 
		\begin{equation}\label{cube}
		B(x_{n,i},c_1b^n)\subset Q_{n,i}\subset \overline{B}(x_{n,i},c_1'b^n),
\end{equation}
where $c_1=\frac{1}{2}-\frac{b}{1-b}$, $c'_1=\frac{1}{1-b}$.
\item There exists a point $x_0\in X$ so that for every $n\in\mathbb{Z}$, there is an index $i\in\mathbb{N}_n$ with 
$B(x_0,c_1b^n)\subset Q_{n,i}$.
\item $\{x_{n,i}\colon i\in \mathbb{N}_n\}\subset \{x_{n+1,i}\colon i\in \mathbb{N}_{n+1}\} $ for all $n\in\mathbb{Z}$.
	\end{enumerate}
\end{thm}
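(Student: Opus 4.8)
The plan is to build the family $\{Q_{n,i}\}$ from a nested hierarchy of \emph{centers} and then to cut $X$ into Borel pieces along these centers, exactly as the inclusions (\ref{cube}) suggest. \emph{Step 1 (the centers).} Fix the constant $b\in(0,\tfrac13)$. For each level $n\in\mathbb Z$ I would choose a maximal $b^{n}$-separated set $A_n=\{x_{n,i}\colon i\in\mathbb N_n\}$, i.e. $\rho(x_{n,i},x_{n,j})\ge b^{n}$ for $i\ne j$ with no point addable. Maximality yields the two facts used throughout: the centers are $b^n$-separated, and the balls $\{B(x_{n,i},b^{n})\}_i$ cover $X$. The finite doubling property of Definition \ref{fd} makes every closed ball totally bounded, so $X=\bigcup_k\overline B(x_0,k)$ is separable and each $\mathbb N_n$ may be taken countable. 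I would build the $A_n$ so that $A_n\subseteq A_{n+1}$ and $x_0\in A_n$ for every $n$: anchoring $x_0$ at each level, one extends $A_n$ to a maximal $b^{n+1}$-separated superset to pass to finer levels and thins $A_{n+1}$ to a maximal $b^{n}$-separated subset to pass to coarser ones, diagonalising over the two-sided index set $\mathbb Z$. This already gives property (5) and supplies the distinguished point needed for (4).

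\emph{Step 2 (ancestry tree and cubes).} To each finer center $x_{n+1,j}$ I would attach as \emph{parent} a nearest center of $A_n$ (self-parent when $x_{n+1,j}\in A_n$), breaking ties by a fixed well-ordering of the indices; transitivity then defines an ancestry relation $\preceq$ across all levels. Since $A_k$ is $b^k$-covering, each parent step moves a point by $<b^{k}$, so any level-$m$ descendant of $x_{n,i}$ lies within $\sum_{k=n}^{m-1}b^{k}<\tfrac{b^{n}}{1-b}=c_1'b^{n}$ of $x_{n,i}$. I would define $Q_{n,i}$ to consist exactly of those $y\in X$ whose nearest centers at all sufficiently fine levels have $x_{n,i}$ as their level-$n$ ancestor; concretely this is realised as a suitably closed union of the descendant balls with overlaps resolved by the same fixed ordering, so that at each level the $Q_{n,i}$ form a Borel partition of $X$ refining the partition at every coarser level. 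The unique-parent structure gives the dichotomy in (2) and the covering in (1).

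\emph{Step 3 (ball inclusions and $x_0$).} The outer inclusion $Q_{n,i}\subseteq\overline B(x_{n,i},c_1'b^{n})$ is immediate from the descendant estimate above. For the inner one I would take $y$ with $\rho(y,x_{n,i})<c_1b^{n}$ and show, by a cumulative drift estimate, that the nearest center to $y$ at \emph{every} finer level has $x_{n,i}$ as its level-$n$ ancestor, whence $y\in Q_{n,i}$. At level $n$ the separation forces the nearest center to be $x_{n,i}$ once $c_1<\tfrac12$; passing to levels $n+1,n+2,\dots$ the admissible slack is eroded by the accumulated drift $\sum_{k\ge n+1}b^{k}=\tfrac{b}{1-b}b^{n}$, and the two requirements combine to the sharp threshold $c_1=\tfrac12-\tfrac{b}{1-b}$, giving $B(x_{n,i},c_1b^{n})\subseteq Q_{n,i}$, i.e. (\ref{cube}) and property (3). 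Applying this inner bound at the anchored point $x_0$ yields $B(x_0,c_1b^{n})\subseteq Q_{n,i}$ for the level-$n$ cube containing $x_0$, which is (4).

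\emph{Main obstacle.} The delicate point is not any single inclusion but turning the overlapping ``basins'' of the centers into a genuine Borel partition that refines consistently across the two-sided hierarchy while the nearest-center assignment drifts from level to level. The hypothesis $b<\tfrac13$ is exactly what rescues this: it is equivalent to $c_1=\tfrac12-\tfrac{b}{1-b}>0$, so the accumulated drift is small enough for the inner balls to survive and for the tie-breaking to produce nested, nonempty cubes with the stated constants.
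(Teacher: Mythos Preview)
The paper does not prove this theorem; it is quoted from \cite{Krs} and used as a black box, so there is no in-paper argument to compare your proposal against. Your outline is in fact essentially the construction of \cite{Krs}: nested maximal $b^{n}$-separated nets anchored at a fixed $x_0$, a nearest-neighbour parent map generating an ancestry tree, and cubes assembled from descendant cells, with $c_1'=\tfrac{1}{1-b}$ coming from the geometric drift bound $\sum_{k\ge n}b^{k}$ and $c_1=\tfrac12-\tfrac{b}{1-b}$ from combining the $b^{n}$-separation with that same drift.

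Two places where your sketch is lighter than the full argument deserve mention. First, the two-sided nesting $A_n\subset A_{n+1}$ over all $n\in\mathbb Z$ is not quite achieved by ``thinning $A_{n+1}$ to a maximal $b^{n}$-separated subset'': a maximal $b^{n}$-separated subset of $A_{n+1}$ is only $(b^{n}+b^{n+1})$-covering in $X$, not $b^{n}$-covering, so either the constants shift or one must arrange the coarse levels more carefully (in \cite{Krs} this is handled explicitly). Second, turning the basins into a genuine Borel \emph{partition} requires a concrete tie-breaking rule applied level by level, not a limiting description; in \cite{Krs} this is done with a fixed well-order on the index sets and a half-open splitting of the overlaps. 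Neither point is a conceptual obstacle, and your identification of the hypothesis $b<\tfrac13$ as exactly the condition $c_1>0$ is correct.
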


\begin{pro}\label{num}
Let $(X, \rho)$ be a metric space with an Ahlfors $s$-regular measure $\mu$.
\begin{enumerate}
	\item Suppose that $\mu$ is a probability measure, then for $n\in\mathbb{Z}$, we have $$C^{-1}{c'}_1^{-s}b^{-ns}\le\#\{Q_{n,i}\colon i\in\mathbb{N}_n\}\le Cc_1^{-s}b^{-ns}.$$
\item The metric space $(X,\rho)$ has the doubling property. 
\end{enumerate}
\end{pro}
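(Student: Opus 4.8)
\quad
For part (1), the plan is to exploit that the level-$n$ cubes form a Borel partition of $X$. Since $\mu$ is a probability measure, property (1) of Theorem \ref{thm:nest} gives $1=\mu(X)=\sum_{i\in\mathbb{N}_n}\mu(Q_{n,i})$, and I would estimate each summand from both sides via the inclusion \eqref{cube}. Using $Q_{n,i}\subset\overline{B}(x_{n,i},c_1'b^n)$ together with the upper Ahlfors bound yields $\mu(Q_{n,i})\le C(c_1')^sb^{ns}$, so summing and comparing with $1$ forces $\#\{Q_{n,i}\colon i\in\mathbb{N}_n\}\ge C^{-1}(c_1')^{-s}b^{-ns}$; symmetrically, $B(x_{n,i},c_1b^n)\subset Q_{n,i}$ and the lower Ahlfors bound give $\mu(Q_{n,i})\ge C^{-1}c_1^sb^{ns}$, and comparing with $1$ gives $\#\{Q_{n,i}\colon i\in\mathbb{N}_n\}\le Cc_1^{-s}b^{-ns}$. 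Before carrying this out I would record that the upper bound $\mu(\overline{B}(x,r))\le Cr^s$ holds for closed balls and for all $r>0$ (not just $r\le\diam X$), which follows from \eqref{eqah} by monotone limits together with $\mu(X)\le C(\diam X)^s$; the lower Ahlfors bound, by contrast, is only invoked at scales $c_1b^n\le\diam X$, which is precisely where the count estimate is nontrivial.

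For part (2), I would run the classical ``Ahlfors regular $\Rightarrow$ doubling'' packing argument, which uses neither the cube decomposition nor the probability normalization. Given $x\in X$ and $r>0$ with $r\le 2\diam X$ (the case $r>2\diam X$ being trivial, since then $\overline{B}(x,2r)=X=\overline{B}(x,r)$), I would take a maximal $r$-separated subset $\{y_1,\dots,y_k\}$ of $\overline{B}(x,2r)$. Maximality gives $\overline{B}(x,2r)\subset\bigcup_{j=1}^k\overline{B}(y_j,r)$, and the key estimate is that the pairwise disjoint balls $B(y_j,r/2)$ all lie in $B(x,5r/2)$, so that
\[
kC^{-1}(r/2)^s\le\sum_{j=1}^k\mu\big(B(y_j,r/2)\big)=\mu\Big(\bigcup_{j=1}^kB(y_j,r/2)\Big)\le\mu\big(B(x,5r/2)\big)\le C(5r/2)^s,
\]
whence $k\le C^2 5^s$. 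Taking $N=\max\{1,\lceil C^2 5^s\rceil\}$ then witnesses the doubling property with a constant independent of $x$ and $r$.

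I do not expect a genuine obstacle here: both parts are short. The only points I would be careful about are the extension of the Ahlfors upper bound to closed balls and to radii exceeding $\diam X$ used in part (1), and the fact that the count bound in part (1) is to be read as vacuous at the (large) scales where the inner ball in \eqref{cube} already exhausts $X$.
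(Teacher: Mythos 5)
Your part (1) is essentially the paper's own argument: the level-$n$ cubes are pairwise disjoint and cover $X$, the inclusions \eqref{cube} together with \eqref{eqah} give $\mu(X)\ge C^{-1}\#\mathbb{N}_n(c_1b^n)^s$ and $\mu(X)\le C\#\mathbb{N}_n(c_1'b^n)^s$, and the two count bounds follow from $\mu(X)=1$; your remark about extending the upper Ahlfors bound to closed balls and to radii exceeding $\diam X$ is a point of care the paper passes over silently. For part (2) you take a genuinely different route. The paper stays inside the cube machinery: it fixes the scale $n_0=\min\{n\ge 1\colon c_1'b^{n}<r\}$, covers $\overline{B}(x,2r)$ by the closed balls $\overline{B}(x_{n_0,i},r)$ attached to the cubes $Q_{n_0,i}$ that meet it, and bounds their number by packing the disjoint inner balls $B(x_{n_0,i},c_1b^{n_0})$ into $B(x,4r)$, obtaining $\#I\le C^{2}\bigl(4c_1'/(bc_1)\bigr)^{s}$. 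You instead run the classical maximal $r$-separated-set packing argument directly from Ahlfors regularity, obtaining the explicit constant $C^{2}5^{s}$. Both are correct: your version is more elementary, uses neither Theorem \ref{thm:nest} nor the probability normalization (so it makes transparent that doubling is a consequence of regularity alone), while the paper's version recycles the cube structure it has already built and expresses its constant through $b,c_1,c_1'$. The one step to keep explicit in a final write-up is the point you already flagged: your bound $\mu\bigl(B(x,5r/2)\bigr)\le C(5r/2)^{s}$ (like the paper's $\mu(B(x,4r))\le C4^{s}r^{s}$) uses the upper half of \eqref{eqah} at radii possibly larger than $\diam X$, so a one-line justification that $\mu(X)\le C(\diam X)^{s}$ (by continuity from below, or simply $\mu(X)=1\le C(\diam X)^s$ after rescaling in the paper's probability setting) should accompany it.
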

\begin{proof}
(1) From the construction of $\{Q_{n,i} \colon n \in \mathbb{Z}, i \in \mathbb{N}_n\subset \mathbb{N}\}$, we see that for any
$n \in \mathbb{Z},~ Q_{n,i} \cap  Q_{n, j} = \varnothing$ for $i \ne j \in \mathbb{N}_n$ . Since $\mu$ is Ahlfors $s$-regular, combining (1), (3)
in Theorem \ref{thm:nest} and the equalities (\ref{ar}), we have
\begin{equation*}
\mu(X) =\sum_{i \in \mathbb{N}_n}\mu(Q_{n,i})\ge \sum_{i \in \mathbb{N}_n}\mu(B(x_{n,i},c_1b^n))\ge C^{-1}\#\mathbb{N}_n(c_1b^n)^s,
\end{equation*}
and 
\begin{equation*}
\mu(X) =\sum_{i \in \mathbb{N}_n}\mu(Q_{n,i})\le \sum_{i \in \mathbb{N}_n}\mu(\overline{B}(x_{n,i},c'_1b^n))\le C\#\mathbb{N}_n(c'_1b^n)^s,
\end{equation*}
which implies  that 
\[C^{-1}{c'}_1^{-s}b^{-ns}\le\#\mathbb{N}_n\le Cc_1^{-s}b^{-ns}.\]

(2) Given $x\in X$ and $r>0$, let $n_0=\min\{n\ge1\colon c'_1b^n< r\}$. Let $$I=\{i\in \mathbb{N}_{n_0}\colon Q_{n_0,i}\cap \overline{B}(x,2r)\ne\varnothing\}.$$ Then $\{\overline{B}(x_{n_0,i},r)\}_{i\in I}$ is a cover of  $\overline{B}(x,2r)$. Note that $\bigcup_{i\in I}Q_{n_0,i}\subset B(x,4r)$ and $r\le c'_1b^{n_0-1}$, then by (\ref{cube}), we have
\begin{equation*}
\begin{split}
 C4^sr^s&\ge\mu(B(x,4r))\ge\mu\Big(\bigcup_{i\in I}Q_{n_0,i}\Big)\\
 &=\sum_{i\in I}\mu(Q_{n_0,i})\ge \sum_{i\in I}\mu(B(x_{n_0,i},c_1b^{n_0}))\\
 &\ge C^{-1}c_1^sb^{n_0s}\#I\ge C^{-1}(bc_1/c'_1)^sr^s\#I,
 \end{split}
 \end{equation*}
which follows that $\#I\le C^2\Big(\frac{4c'_1}{bc_1}\Big)^s$.
\end{proof}

Therefore for a compact Ahlfors $s$-regular space $(X,\mathscr{B},\mu,\rho)$, there exists  the family $\{Q_{n,i} \colon n \ge0, i \in \mathbb{N}_n\}$ satisfying the properties in Theorem \ref{thm:nest}, which are called ``generalized dyadic cubes".  For convenience, we write $\mathcal{Q}_0=\{X\}$ and $\mathcal{Q}_n=\{Q_{n,i}\colon i\in \mathbb{N}_n\}$ for $n\ge1$, and $\Q=\bigcup_{n\ge0}\Q_n$.

\subsection{Large intersection properties in metric spaces}

\begin{dfn}\label{ms}
Let $(X, \rho)$ be a metric space. A Borel measure $\mu$ on $X$ is said to be doubling if it is finite and positive in every ball
and there exists a constant $1\le c_2<\infty$ such that for all $x \in X$ and $r > 0$,
\[0<\mu\Big(\overline{B}(x, 2r)\Big) \le c_2 \mu\Big(\overline{B}(x, r)\Big)<\infty. \]

\end{dfn}
Let $(X,\rho)$ be a metric space endowed with doubling measure $\mu$. Denote $\tau=\dim_{\rm H}X$, and $\tau$ is finite, see  \cite[Section 13]{fr20}. In \cite{ns}, Negreira and Sequeira gave the definition of the classes of $G_{\delta}$ sets with large intersection property. Recall that a $G_{\delta}$ set is a countable intersection of open sets.
\begin{dfn}\label{lip}
 Let $0<t\le\tau$. Given $F\subset X$, define the net content
\[\mathscr{M}_{\infty}^t(F)=\inf\Bigl\{\sum_{i\ge1}\mu( Q_i)^{t/\tau}\colon F\subset \bigcup_{i\ge1} Q_i~{\rm where ~}Q_i\in \Q\Bigr\}.\]
 We denote $\G^t(X)$ the class of all $G_{\delta}$ sets $F \subset X$ such that for any $t'<t$, 
\[\mathscr{M}_{\infty}^{t'}(F\cap Q)=\mathscr{M}_{\infty}^{t'}(Q)\]
holds for all $Q\in\Q$. 
\end{dfn}

\begin{rem}
(1) Negreira and Sequeira \cite{ns} showed that the net contents given by  different dyadic decomposition are equivalent, and pointed out that the net content is defined by $\mu(Q)^{t/\tau}$ instead of $(\diam Q)^{t}$, since the function $Q\mapsto(\diam Q)^t$ is not sub-additive if $t>1$ which is important in the study of $\mathscr{M}_{\infty}^t$ and large intersection property.

(2) Notice that the class $\mathcal{G}^t(X)$ given in Definition \ref{lip} depends on $\mu$. Since Ahlfors $s$-regular measures on $X$ are equivalent, therefore the classes $\mathcal{G}^t(X)$ defined by different Ahlfors $s$-regular measures are same. In this paper we only consider such measures, hence we use the notation $\mathcal{G}^t(X)$ instead of $\mathcal{G}_{\mu}^t(X)$.
\end{rem}

There are several equivalent definitions of $\mathcal{G}^t(X)$, see \cite{ns}. In this paper, we will use the following equivalent definition.
\begin{thm}[\cite{ns}]\label{ed}
Let $(X, \rho)$ be a metric space enowed with doubling measure $\mu$, and  $\tau = \dim_{\rm H} X$. Take
$F\subset X$ a $G_{\delta}$ subset and $0 < t \le \tau$. Then the following statements are equivalent:
\begin{enumerate}
	\item For all generalized dyadic cubes $Q\in\Q$, we have
\[\mathscr{M}_{\infty}^{t'}(F\cap Q)=\mathscr{M}_{\infty}^{t'}(Q)\quad(\forall t'<t).\] 
    \item There exists a constant $0 < c \le1$ such that for all generalized dyadic cubes $Q\in\Q$, we have
\be\label{ed}
\mathscr{M}_{\infty}^{t'}(F\cap Q)\ge c\mathscr{M}_{\infty}^{t'}(Q) \quad(\forall t'<t).
\ee
\end{enumerate}
\end{thm}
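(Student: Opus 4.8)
The plan is to obtain (1)$\Rightarrow$(2) for free and to prove the substantial implication (2)$\Rightarrow$(1) by a self‑improvement (``bootstrap'') argument which upgrades the uniform constant $c$ to the sharp constant $1$. The implication (1)$\Rightarrow$(2) is immediate with $c=1$: any cover of a cube $Q$ covers $F\cap Q$, so $\mathscr{M}_{\infty}^{t'}(F\cap Q)\le\mathscr{M}_{\infty}^{t'}(Q)$ always, and (1) then forces equality. For the converse, fix $t'<t$; since ``$\le$'' is automatic, it is enough to show $\mathscr{M}_{\infty}^{t'}(F\cap Q)\ge\mathscr{M}_{\infty}^{t'}(Q)$ for every $Q\in\Q$. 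I would first record the identity $\mathscr{M}_{\infty}^{t'}(Q)=\mu(Q)^{t'/\tau}$: the cover $\{Q\}$ gives ``$\le$'', while for ``$\ge$'' one discards from an arbitrary cover the cubes disjoint from $Q$, keeps the maximal remaining cubes (all contained in $Q$ and pairwise disjoint, hence partitioning $Q$), and applies super‑additivity of $x\mapsto x^{t'/\tau}$, valid because $t'/\tau<1$ as $t'<t\le\tau$. These same reductions let me assume that any competing cover $\mathcal{C}$ of $F\cap Q$ consists of pairwise disjoint cubes contained in $Q$ with $\sum_{Q_i\in\mathcal{C}}\mu(Q_i)^{t'/\tau}<\infty$ (otherwise there is nothing to prove).

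The geometric core is a \emph{quantitative strict} super‑additivity across one generation. Using the doubling property of $\mu$ together with the two‑sided inclusion (\ref{cube}) of Theorem \ref{thm:nest}, I would show that if $R\in\Q$ has children $R_1,\dots,R_M\in\Q$ (the cubes of the next generation contained in $R$, which partition $R$), then $\mu(R_k)\ge\rho_0\,\mu(R)$ for all $k$ with an absolute $\rho_0\in(0,1)$, whence $M\le\rho_0^{-1}$. Since $t'/\tau<1$ \emph{strictly}, a short convexity computation then gives
\[
\sum_{k=1}^{M}\mu(R_k)^{t'/\tau}\ \ge\ (1+\delta)\,\mu(R)^{t'/\tau}
\]
for some $\delta=\delta(\rho_0,t'/\tau)>0$ not depending on $R$ (there are only finitely many relevant normalized configurations $(\mu(R_k)/\mu(R))_{k=1}^{M}$, and the worst case is one child of near‑full measure with the others of measure $\rho_0\mu(R)$). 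I would also note that hypothesis (2) forces $F$ to meet every nondegenerate cube, since otherwise $\mathscr{M}_{\infty}^{t'}(F\cap Q)=0<c\,\mathscr{M}_{\infty}^{t'}(Q)$.

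Now the bootstrap. Put $\eta=\inf\{\mathscr{M}_{\infty}^{t'}(F\cap Q)/\mu(Q)^{t'/\tau}:Q\in\Q\ \text{nondegenerate}\}$; by (2) we have $\eta\ge c>0$, and $\eta\le1$ trivially. Suppose $\eta<1$, fix $\varepsilon\in(0,c\delta)$, and pick a cube $Q$ and a cover $\mathcal{C}$ of $F\cap Q$ (disjoint, inside $Q$, finite sum) with $\sum_{Q_i\in\mathcal{C}}\mu(Q_i)^{t'/\tau}<(\eta+\varepsilon)\mu(Q)^{t'/\tau}<\mu(Q)^{t'/\tau}$; then $\mathcal{C}\ne\{Q\}$, and—descending through the necessarily finite chain of single‑child cubes coinciding with $Q$ as sets—I may assume $Q$ has $M\ge2$ children $R_1,\dots,R_M$. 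With $\mathcal{C}_k:=\{Q_i\in\mathcal{C}:Q_i\subseteq R_k\}$, the nesting property of the generalized dyadic cubes gives $\mathcal{C}=\bigsqcup_k\mathcal{C}_k$ and shows that each $\mathcal{C}_k$ is a disjoint cover of $F\cap R_k$ by cubes inside $R_k$, so $\sum_{Q_i\in\mathcal{C}_k}\mu(Q_i)^{t'/\tau}\ge\mathscr{M}_{\infty}^{t'}(F\cap R_k)\ge\eta\,\mu(R_k)^{t'/\tau}$. Summing over $k$ and using the quantitative super‑additivity,
\[
(\eta+\varepsilon)\,\mu(Q)^{t'/\tau}\ >\ \sum_{k}\sum_{Q_i\in\mathcal{C}_k}\mu(Q_i)^{t'/\tau}\ \ge\ \eta\sum_{k}\mu(R_k)^{t'/\tau}\ \ge\ \eta(1+\delta)\,\mu(Q)^{t'/\tau},
\]
so $\varepsilon>\eta\delta\ge c\delta$, a contradiction. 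Hence $\eta=1$, i.e.\ statement (1) holds at the exponent $t'$; since $t'<t$ was arbitrary, (1) holds.

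The step I expect to be the genuine obstacle is the uniform gain $\delta>0$: it says that one round of dyadic refinement is strictly—and controllably—costly for $\mathscr{M}_{\infty}^{t'}$, no matter which cube is refined, and it is precisely here that the doubling hypothesis enters (via the comparability of a cube with the inscribed and circumscribed balls of (\ref{cube})) and that the strict inequality $t'<t$ is used. A more bookkeeping‑type point, but one worth care, is justifying the reductions of an arbitrary cover of $F\cap Q$ to a pairwise disjoint one supported inside $Q$ with finite sum, since these legitimize the recursion over children; the $G_{\delta}$ hypothesis itself is not needed for this particular equivalence.
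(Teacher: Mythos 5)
The paper itself offers no proof of this statement—it is quoted verbatim from Negreira--Sequeira \cite{ns}—so there is no internal argument to compare against; judged on its own terms, your proof is correct in substance and takes a genuinely different route from the classical one. The argument behind Falconer's original equivalence (adapted to metric spaces in \cite{ns}) amplifies the constant $c$ by exploiting the exponent gap: the uniform inequality is iterated down the cube hierarchy and the per-generation gain comes from replacing $t$ by a strictly smaller $t'$. You instead fix $t'$ and run an infimum bootstrap at that single exponent, the gain coming from strict concavity of $x\mapsto x^{t'/\tau}$ combined with the uniform child-to-parent measure ratio $\mu(R_k)\ge\rho_0\,\mu(R)$, which indeed follows from (\ref{cube}) and iterated doubling as you indicate; this yields the formally stronger fact that the uniform inequality at a fixed exponent $t'<\tau$ already forces equality at that same $t'$, which in turn gives (2)$\Rightarrow$(1), at the price of the extra geometric input $\rho_0$ that the exponent-gap argument does not need. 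Two points to tighten in writing it up: the assertion that ``there are only finitely many relevant normalized configurations'' is not literally true (the ratios $\mu(R_k)/\mu(R)$ range over a continuum), so replace it by the explicit bound $\sum_k x_k^{\theta}\ge 1+\rho_0\bigl(2^{1-\theta}-1\bigr)$ with $\theta=t'/\tau$, obtained from the smallest child, which satisfies $\rho_0\le x_{\min}\le 1/2$ once $M\ge2$; and choose $\varepsilon<\min\{c\delta,\,1-\eta\}$ so the near-optimal cover genuinely excludes cubes equal to or containing $Q$, which is what forces every cover cube into one of the $M\ge 2$ cubes at the first splitting generation. Your handling of the single-child chains, of point cubes, and your observation that the $G_{\delta}$ hypothesis is not needed for this particular equivalence are all fine.
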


\begin{rem}
Note that, given $0 < t\le \tau $, $\mathscr{M}_{\infty}^t(Q)= \mu(Q)^{t/\tau}$ holds for all dyadic cubes $Q\in \Q$. Then (\ref{ed}) can be rewritten as 
\be\label{ned}
\mathscr{M}_{\infty}^{t'}(F\cap Q)\ge c\mu(Q)^{t'/\tau} \quad(\forall t'<t).
\ee
\end{rem}
The following are some properties of the class $\G^{t}(X)$.

\begin{pro}[\cite{ns}]\label{pro}
Let $(X, \rho)$ be a metric space endowed with doubling measure $\mu$, and $\tau = \dim_{\rm H} X$ and take $0<t\le \tau$.
\begin{enumerate}
\item If $0 < t_1\le t$, then $\G^{t}(X)\subset \G^{t_1}(X)$.
\item If $F \subset E \subset X$, where $E$, $F$ are $G_{\delta}$ sets, and $F \in \G^t(X)$, then $E\in \G^t(X)$.
\item If $(X, \rho)$ is complete, then $\G^t(X)$ is closed under countable intersections.
\item When $\mu$ is Ahlfors $\tau$-regular, then $F \in \G^{t}(X)$ implies that $\dim_{\rm H} F \ge t$.
\end{enumerate}
\end{pro}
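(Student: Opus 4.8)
For Proposition~\ref{pro}, parts (1), (2), (4) I would dispatch directly. For (1): if $0<t_1\le t$ then $\{t'\colon 0<t'<t_1\}\subset\{t'\colon 0<t'<t\}$, so by Definition~\ref{lip} membership in $\G^{t}(X)$ is formally stronger than membership in $\G^{t_1}(X)$. For (2): if $F\subset E$ are $G_\delta$ with $F\in\G^{t}(X)$, then for each $t'<t$ and $Q\in\Q$ monotonicity of $\mathscr{M}_\infty^{t'}$ under inclusion gives $\mathscr{M}_\infty^{t'}(Q)=\mathscr{M}_\infty^{t'}(F\cap Q)\le\mathscr{M}_\infty^{t'}(E\cap Q)\le\mathscr{M}_\infty^{t'}(Q)$, forcing equality, so $E\in\G^{t}(X)$. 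For (4): under Ahlfors $\tau$-regularity one has $\mu(Q)^{t'/\tau}\asymp(\diam Q)^{t'}$ on dyadic cubes, and by the doubling property (Proposition~\ref{num}) balls and cubes of comparable diameter cover one another with bounded multiplicity, so $\mathscr{M}_\infty^{t'}\asymp\mathcal{H}_\infty^{t'}$; choosing any cube $Q$ with $\mu(Q)>0$, the equality $\mathscr{M}_\infty^{t'}(F\cap Q)=\mu(Q)^{t'/\tau}>0$ for every $t'<t$ forces $\mathcal{H}^{t'}(F)>0$, hence $\dim_{\rm H}F\ge t'$ for all $t'<t$ and thus $\dim_{\rm H}F\ge t$.

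The substance of the statement is (3). Write each $F_k=\bigcap_{m\ge1}V_{k,m}$ with $V_{k,m}$ open and decreasing in $m$. By the equivalent characterisation of $\G^{t}(X)$ recalled above it is enough (the set is $G_\delta$) to produce a constant $c>0$, independent of the cube $Q_0$ and of $t'<t$, with $\mathscr{M}_\infty^{t'}\big(\bigcap_kF_k\cap Q_0\big)\ge c\,\mathscr{M}_\infty^{t'}(Q_0)$. I would build this by a Cantor-type construction, working with cubes and their inscribed balls so that the sets in play are open. Enumerate the pairs $(k,m)$ as $(k_1,m_1),(k_2,m_2),\dots$ so that each $k$ recurs with $m\to\infty$, and construct a decreasing sequence of finite unions of closures of disjoint dyadic sub-cubes of $Q_0$: at step $n$, for each current cube $P$ the set $F_{k_n}\cap P\subset V_{k_n,m_n}\cap P$ has full net content $\mu(P)^{t'/\tau}$ since $t'<t$ and $F_{k_n}\in\G^{t}(X)$, so using that $\mathscr{M}_\infty^{t'}$ is inner regular (being, like Hausdorff content, a Choquet capacity) I would pick a compact subset of $V_{k_n,m_n}\cap P$ of net content $\ge(1-\eta_n)\mu(P)^{t'/\tau}$, cover it by the finitely many dyadic cubes of a sufficiently fine generation meeting it — their closures lie in $V_{k_n,m_n}$ and their total $\mu(\cdot)^{t'/\tau}$ is $\ge(1-\eta_n)\mu(P)^{t'/\tau}$ — and, by first splitting $P$ into intermediate sub-cubes and doing this in each, also arrange that the chosen cubes are evenly distributed through $P$ with diameters $<2^{-n}$. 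Completeness of $X$ makes the nested intersection $K$ a nonempty compact set, and since every point of $K$ lies in every $V_{k,m}$ we get $K\subset\bigcap_kF_k\cap Q_0$.

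To read off the content of $K$ I would distribute a mass at each refinement in proportion to $\mu(\cdot)^{t'/\tau}$, starting from $\mu(Q_0)^{t'/\tau}$ at $Q_0$, obtaining a measure $\nu$ on $K$ of total mass $\mu(Q_0)^{t'/\tau}$. Sub-additivity of $r\mapsto r^{t'/\tau}$, valid precisely because $t'\le\tau$, keeps the assigned masses comparable to $\mu(\cdot)^{t'/\tau}$ up to the convergent product $\prod_n(1-\eta_n)^{-1}$, and the uniform spreading of the chosen cubes inside each parent yields a net-Frostman bound $\nu(R)\lesssim\mu(R)^{t'/\tau}$ for all dyadic $R$; the mass distribution principle then gives $\mathscr{M}_\infty^{t'}(K)\gtrsim\mu(Q_0)^{t'/\tau}=\mathscr{M}_\infty^{t'}(Q_0)$, and the equivalent characterisation upgrades this lower bound to the equality defining $\G^{t}(X)$, so $\bigcap_kF_k\in\G^{t}(X)$.

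The step I expect to be the main obstacle is exactly this mass-distribution/Frostman estimate: the cubes chosen at each refinement must sit inside the thin open set $V_{k_n,m_n}$ — which may have tiny measure while still carrying full $t'$-content — and one must select them spread uniformly enough inside the parent that $\nu$ does not concentrate at intermediate scales, with all the implied constants surviving the passage through infinitely many generations. This is where the definition of $\mathscr{M}_\infty^t$ through the \emph{sub-additive} gauge $Q\mapsto\mu(Q)^{t/\tau}$ (rather than $(\diam Q)^t$), the inner regularity of the net content, and the completeness of $X$ are genuinely used; the remaining bookkeeping, including the content–Hausdorff comparisons in (4), is routine.
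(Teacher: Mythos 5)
The paper does not prove this proposition at all: it is imported verbatim from Negreira and Sequeira \cite{ns} with a citation, so there is no internal argument to compare yours against. Judged on its own terms, your treatment of (1), (2) and (4) is correct and essentially complete: (1) and (2) are immediate from Definition \ref{lip} and monotonicity of $\mathscr{M}_{\infty}^{t'}$, and for (4) the comparison $\mathscr{M}_{\infty}^{t'}\asymp\mathcal{H}_{\infty}^{t'}$ via Ahlfors regularity and the bounded-overlap covering of Proposition \ref{num} is the standard route (for each fixed $t'$ the implied constants are harmless, which is all you need).

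For (3), your architecture — a Cantor-type nested construction threaded through the open sets $V_{k_n,m_n}$, a mass distribution with a net-Frostman bound, and the upgrade from a uniform lower bound to equality via Theorem \ref{ed} — is exactly the Falconer/Negreira--Sequeira strategy, so the route is right. But as a proof it rests on two load-bearing claims that you assert rather than establish, and they are precisely the content of the cited result. First, the inner regularity (capacitability) of $\mathscr{M}_{\infty}^{t'}$ on the Borel sets $F_{k_n}\cap P$: for net contents this is a genuine theorem, usually derived from an increasing-sets lemma ($\mathscr{M}_{\infty}^{t'}(\bigcup_i A_i)=\lim_i\mathscr{M}_{\infty}^{t'}(A_i)$ for increasing $A_i$) which itself needs proof in the metric-space setting. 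Second, the ``evenly distributed'' selection of sub-cubes that is supposed to give $\nu(R)\lesssim\mu(R)^{t'/\tau}$ at every intermediate dyadic scale, with constants independent of $t'$ and of the cube and surviving infinitely many generations — you correctly identify this as the main obstacle, but identifying it is not the same as overcoming it; the published proofs spend most of their effort exactly here (and Theorem \ref{ed} demands a single constant $c$ uniform over all $Q$ and all $t'<t$, so none of your implied constants may degenerate as $t'\uparrow t$). So: (1), (2), (4) done; (3) is a faithful blueprint of the standard proof with its two hardest lemmas left open.
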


\begin{rem}
Let $(X, \rho)$ be a metric space endowed with doubling measure $\mu$, and $\tau = \dim_{\rm H} X$. Take $0<t\le \tau$, then from the definition of $\G^{t}(X)$, we have
\[ \G^{t}(X)=\bigcap_{t'<t}\G^{t'}(X).\]
\end{rem}

\section{Proofs of Theorem \ref{main} and Corollary \ref{mainn}}
In 2019, Persson \cite{per19} used potentials and energies of measures to prove that some sets have large intersection property, which is  a new and useful method in the study of large intersection property. Later, Persson \cite{per21} applied this idea to show that the  limsup sets, generated by  sequences of open sets, belong to $\mathcal{G}^{\gamma}(\mathbb{T}^d)$ for some $\gamma>0$.
In this paper, we borrowed some ideas  from \cite{per21} to prove Theorem \ref{main}. Before that, we give some related lemmas.

 In this section, we always assume that $(X,\mathscr{B},\mu,\rho)$ is a probability space where the metric space $(X,\rho)$ is compact and $\mu$ is  an Ahlfors $s$-regular Borel measure  ($0<s<\infty$). Let $\Q=\bigcup_{k\ge0}\Q_k$ be the collection of generalized dyadic cubes in $(X, \rho)$ given in Section 2.1.
\begin{lem}\label{leme}
For $0\le t<s$ and $U\in \mathscr{B}$ with $\diam U>0$, we have
\begin{equation}\label{es}
(\diam U)^{-t}\mu(U)^2\le I_t(\mu, U)\le C_1(\diam U)^{s-t}\mu(U),
\end{equation}
where $C_1>0$ is an absolute constant.
\end{lem}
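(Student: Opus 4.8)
For the lower bound I would simply note that $\rho(x,y)\le\diam U$ whenever $x,y\in U$, so $\rho(x,y)^{-t}\ge(\diam U)^{-t}$ (with equality when $t=0$, and the left-hand side read as $+\infty$ if $x=y$ and $t>0$); integrating this over $U\times U$ against $d\mu(x)\,d\mu(y)$ gives at once $I_t(\mu,U)\ge(\diam U)^{-t}\mu(U)^2$.

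For the upper bound the plan is to first estimate the truncated potential $\phi_t(\mu,U,y)$ uniformly in $y$ and then integrate over $y\in U$, using $I_t(\mu,U)=\int_U\phi_t(\mu,U,y)\,d\mu(y)$. So fix $y\in U$ and put $R=\diam U$, so that $U\subset\overline{B}(y,R)$; since $\mu(\overline{B}(y,r))\to0$ as $r\to0$ by \eqref{eqah}, the singleton $\{y\}$ is $\mu$-null. I would decompose $U\setminus\{y\}$ into the dyadic annuli $U_k=\{x\in U:2^{-k-1}R<\rho(x,y)\le2^{-k}R\}$, $k\ge0$. On $U_k$ the integrand is at most $(2^{-k-1}R)^{-t}$, while $U_k\subset\overline{B}(y,2^{-k}R)$ together with Ahlfors $s$-regularity gives $\mu(U_k)\le C'(2^{-k}R)^s$; summing,
\[\phi_t(\mu,U,y)\le\sum_{k\ge0}(2^{-k-1}R)^{-t}\,C'(2^{-k}R)^s=C'2^t\Big(\sum_{k\ge0}2^{-k(s-t)}\Big)R^{s-t}=:C_1R^{s-t},\]
where the series converges \emph{precisely because} $t<s$. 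Integrating this estimate in $y$ over $U$ then yields $I_t(\mu,U)\le C_1(\diam U)^{s-t}\mu(U)$. (Equivalently, one may bound $\phi_t(\mu,U,y)$ by the layer-cake integral $C'\int_0^\infty\min\{R^s,\lambda^{-s/t}\}\,d\lambda$ and split it at $\lambda=R^{-t}$; the tail is finite exactly when $t<s$.)

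Both steps are elementary and I do not expect a real obstacle; the only hypothesis that is genuinely used is $t<s$, which is precisely what makes the sum (or integral) above converge. The one place I would be slightly careful is the passage from the open-ball estimate \eqref{eqah} to the closed-ball estimate $\mu(\overline{B}(y,r))\le C'r^s$ used above: for $r<\diam X$ this is immediate from \eqref{eqah} by letting the radius decrease, and a routine compactness argument (using $\mu(X)=1$) disposes of the extremal radius $r=\diam X$, which can occur only when $\diam U=\diam X$. As a result $C'$, and hence $C_1$, depends only on $C$, $s$ and $t$ (and not on $U$).
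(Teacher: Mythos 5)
Your argument is essentially the paper's: the lower bound is the same one-line estimate from $\rho(x,y)\le\diam U$, and the upper bound uses the identical device of fixing $y\in U$, decomposing $\overline{B}(y,\diam U)$ into dyadic annuli, invoking Ahlfors regularity on each annulus, and summing the geometric series that converges precisely because $t<s$ (the paper bounds each annulus by $C l_j^s-C^{-1}l_{j+1}^s$ where you use simply $C'(2^{-k}R)^s$, an immaterial difference), after which one integrates in $y$. The only point I would push back on is your parenthetical treatment of the extremal radius: the claim that a ``routine compactness argument using $\mu(X)=1$'' disposes of the case $r=\diam X$ is not correct in general, because Ahlfors regularity of balls of radius at most $\diam X$ does not force $\mu(X)\le C'(\diam X)^s$. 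For instance, take $n$ copies of the interval $[0,1/n]$ with Lebesgue measure, any two points in different copies declared to be at distance exactly $1/n$: this is a compact Ahlfors $1$-regular probability space with constant $C=2$ and $\diam X=1/n$, and there the asserted upper bound on $I_t(\mu,U)$ even fails for $U=X$ once $n$ is large, since most pairs sit at distance exactly $\diam X$. So the inequality with a constant depending only on $C,s,t$ is really a statement for $\diam U<\diam X$ (which is all that is ever used: in the applications $U=E_n\subset B_n$ with $\diam B_n\to0$); the paper itself silently applies the upper regularity bound to the closed ball of radius exactly $\diam U$ and thus elides the same degenerate case, so this does not distinguish your proof from theirs, but you should either restrict to $\diam U<\diam X$ or drop the claim that the extremal case follows.
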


\begin{proof}
Write $l=\diam U$, $l_j=2^{-j}l$. For $t<s$ and $y\in U$, we have
\begin{equation*}
\begin{split}
 \phi_t(\mu,U,y)&=\int_{U} \rho(x,y)^{-t}d\mu(x) \le\int_{\overline{B}(y,l)} \rho(x,y)^{-t}d\mu(x) = \sum_{j\ge0}\int_{\overline{B}(y,l_j)\setminus \overline{B}(y,l_{j+1})}\rho(x,y)^{-t}d\mu(x)\\
&\le  \sum_{j\ge0}l_{j+1}^{-t}\mu\Big(\overline{B}(y, l_j)\setminus \overline{B}(y, l_{j+1})\Big)= \sum_{j\ge0}l_{j+1}^{-t}\Big(\mu(\overline{B}(y, l_j))-\mu( \overline{B}(y, l_{j+1}))\Big)\\
&\le \sum_{j\ge0}l_{j+1}^{-t}\Big(Cl_j^s-C^{-1}l_{j+1}^s\Big)= l^{s-t}\sum_{j\ge0}2^{(1+j)(t-s)} (C2^s-C^{-1})\\
&= C_1(\diam U)^{s-t},
\end{split}
\end{equation*}
where $C_1>0$ is a constant. Then $I_t(\mu, U)=\int_U  \phi_t(\mu,U,y)d\mu(y)\le C_1(\diam U)^{s-t} \mu(U)$. 
 Also
\[I_t(\mu, U)=\int_U\int_U \rho(x , y)^{-t}d\mu(x)d\mu(y)\ge \int_U\int_U (\diam U)^{-t}d\mu(x)d\mu(y)=(\diam U)^{-t}\mu(U)^2.\]

\end{proof}

\begin{rem} For $0\le t<s$, since $(X, \rho)$ is compact, we have $I_t(\mu)\le C_1\max\{1,(\diam X)^s\}$ $<\infty$.
\end{rem}

 \begin{lem}\label{lemiii}
 Let $\{\mu_n\}_{n\ge1}$ be a sequence of Borel measures on $X$ which are absolutely continuous with respect to $\mu$. Suppose there exists a constant $M>0$ such that for any  ball $B\subset X$,
\begin{equation}\label{lem1}
  M^{-1}\le \liminf_{n\to\infty}\frac{\mu_n(B)}{\mu(B)}\le \limsup_{n\to\infty}\frac{\mu_n(B)}{\mu(B)}\le M,
  \end{equation}
and
$\sup_{n\ge1}I_{\gamma}(\mu_n)<M$ for some $\gamma>0$. Then for $t<\gamma$, there exists a constant $M_1>0$ such that
\[M_1^{-1}\le\liminf_{n\to\infty}\frac{I_t(\mu_n, Q)}{I_t(\mu,Q)} \le \limsup_{n\to\infty}\frac{I_t(\mu_n, Q)}{I_t(\mu,Q)}\le M_1\]
holds for all $Q\in\mathcal{Q}$.
 \end{lem}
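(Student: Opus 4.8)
Here is how I would approach the proof of Lemma \ref{lemiii}.

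The plan is to reduce both inequalities to a single scale-invariant estimate: for each fixed cube $Q\in\mathcal Q$ with $R:=\diam Q>0$,
\[
I_t(\mu_n,Q)\ \asymp\ R^{2s-t}\qquad\text{for all sufficiently large }n,
\]
with the implied constants depending only on $s,t,M$ and the Ahlfors constant $C$, not on $Q$. For $\mu$ itself the two-sided bound $I_t(\mu,Q)\asymp R^{2s-t}$ holds for every $n$ and is immediate from Lemma~\ref{leme} together with $\mu(Q)\asymp R^s$ — the latter because $B(x_Q,c_1b^k)\subset Q\subset\overline B(x_Q,c_1'b^k)$ and, by \eqref{eqah}, a ball of an Ahlfors $s$-regular space has diameter comparable to its radius. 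The lower bound for $\mu_n$ costs nothing: $I_t(\mu_n,Q)\ge R^{-t}\mu_n(Q)^2$ by the left-hand inequality of Lemma~\ref{leme} (whose proof uses no regularity), and $\mu_n(Q)\ge\mu_n\big(B(x_Q,c_1b^k)\big)\ge\tfrac1{2M}\mu\big(B(x_Q,c_1b^k)\big)\gtrsim R^s$ once $n$ is large, by \eqref{lem1}. So the whole content is the upper bound $I_t(\mu_n,Q)\lesssim R^{2s-t}$ for large $n$.

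To get it I would fix an auxiliary scale $\delta_0=R2^{-J}$, with $J\in\mathbb N$ to be chosen only at the end and allowed to depend on $Q$, and split for each $y\in Q$
\[
\phi_t(\mu_n,Q,y)=\int_{Q\setminus\overline B(y,\delta_0)}\rho(x,y)^{-t}\,d\mu_n(x)+\int_{Q\cap\overline B(y,\delta_0)}\rho(x,y)^{-t}\,d\mu_n(x).
\]
On the fine part $\rho(x,y)\le\delta_0$, so since $\gamma-t>0$ one has $\rho(x,y)^{-t}\le\delta_0^{\gamma-t}\rho(x,y)^{-\gamma}$, hence the fine part is $\le\delta_0^{\gamma-t}\phi_\gamma(\mu_n,y)$; integrating over $Q$ against $\mu_n$ and using the uniform hypothesis $\sup_nI_\gamma(\mu_n)<M$ bounds its total contribution to $I_t(\mu_n,Q)$ by $M\delta_0^{\gamma-t}$. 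For the coarse part I would use $Q\setminus\overline B(y,\delta_0)\subset\bigcup_{j=0}^{J-1}\big(\overline B(y,R2^{-j})\setminus\overline B(y,R2^{-j-1})\big)$, on whose $j$-th shell $\rho(x,y)^{-t}\le(R2^{-j-1})^{-t}$, so the coarse part is $\le\sum_{j<J}(R2^{-j-1})^{-t}\mu_n\big(\overline B(y,R2^{-j})\big)$. Covering $Q$ by finitely many balls $B(z_1,\delta_0),\dots,B(z_N,\delta_0)$ (possible by Proposition~\ref{num}(2)), for $y\in B(z_m,\delta_0)$ we have $\overline B(y,R2^{-j})\subset B(z_m,2R2^{-j})$; applying \eqref{lem1} to the \emph{finitely many} balls $B(z_m,2R2^{-j})$ and then Ahlfors regularity shows that for $n\ge N_1(Q,J)$ one has $\mu_n\big(\overline B(y,R2^{-j})\big)\le 2M\mu\big(B(z_m,2R2^{-j})\big)\lesssim(R2^{-j})^s$, uniformly in $y\in Q$. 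As $s>t$, summing the geometric series in $j$ bounds the coarse part by $C_2R^{s-t}$ with $C_2$ independent of $Q$ and $J$; integrating against $\mu_n$ and using $\mu_n(Q)\lesssim R^s$ (again \eqref{lem1}, $n$ large) bounds its contribution by $\lesssim R^{2s-t}$.

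Adding the two contributions gives, for $n\ge N_1(Q,J)$, $I_t(\mu_n,Q)\le C_3R^{2s-t}+M(R2^{-J})^{\gamma-t}$ with $C_3$ independent of $J$ and $Q$; since $\gamma-t>0$ I would at this point choose $J=J(Q)$ large enough that $M(R2^{-J})^{\gamma-t}\le R^{2s-t}$, obtaining $I_t(\mu_n,Q)\le(C_3+1)R^{2s-t}\asymp I_t(\mu,Q)$ for all $n\ge N_1(Q)$, hence $\limsup_n I_t(\mu_n,Q)/I_t(\mu,Q)$ is bounded by a constant $M_1$ independent of $Q$; together with the lower bound this is the lemma. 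The step I expect to be the main obstacle is exactly the control of that last term, and it is what forces the auxiliary scale $\delta_0$: the hypothesis $\sup_nI_\gamma(\mu_n)<M$ is a \emph{global} energy bound, insensitive to the size of $Q$, so spending it at scale $R$ (taking $\delta_0=R$) would only give $I_t(\mu_n,Q)\lesssim R^{\gamma-t}$, too weak when $\gamma<2s$; driving the cutoff down to a scale that shrinks fast enough relative to $R$ recovers the sharp exponent, at the harmless cost that $J$, the number $N$ of covering balls, and the threshold $N_1$ all depend on $Q$ (the statement only asks for bounds on $\liminf$ and $\limsup$ at each fixed $Q$). Finally, the finitely many cubes $Q$ with $\diam Q$ comparable to $\diam X$ — where an intermediate radius can exceed $\diam X$ — are handled directly, since then $I_t(\mu_n,Q)\le I_t(\mu_n)\le(\diam X)^{\gamma-t}M$ while $I_t(\mu,Q)$ and $\liminf_nI_t(\mu_n,Q)$ are positive and finite.
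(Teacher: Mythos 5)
Your proof is correct, and it does deliver what the statement actually asks for: a single constant $M_1$ valid for every $Q\in\mathcal{Q}$, since the $Q$-dependent choices ($J(Q)$, the finite $\delta_0$-cover, the threshold $N_1$) only affect how large $n$ must be, which is harmless for $\liminf$/$\limsup$ at a fixed $Q$. The route, however, differs from the paper's in the essential upper-bound step. Both arguments kill the near-diagonal part the same way, by cutting at a small scale and spending the hypothesis $\sup_n I_\gamma(\mu_n)<M$ (the paper cuts at $\alpha^{-1/\gamma}$ and runs a layer-cake computation giving the bound $\frac{M\gamma}{\gamma-t}\alpha^{t/\gamma-1}$; you pay $M\delta_0^{\gamma-t}$ by a pointwise kernel comparison). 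For the remaining part the paper never estimates the size of $I_t(\mu,Q)$: it bounds the kernel by $\min\{\rho(x,y)^{-t},\alpha^{t/\gamma}\}$ and compares $\iint f\,d\mu_n\,d\mu_n$ with $\iint f\,d\mu\,d\mu\le I_t(\mu,Q)$ by approximating $f$ with sums over products of generalized dyadic cubes (dominated convergence) and transferring $\mu_n$ to $\mu$ via \eqref{lem1} on the enclosing balls, obtaining $\limsup_n I_t(\mu_n,Q)\le C^4M^2C_2^2\, I_t(\mu,Q)$ directly. You instead prove the absolute two-sided estimate $I_t(\mu_n,Q)\asymp(\diam Q)^{2s-t}\asymp I_t(\mu,Q)$ by a shell decomposition between the scales $\delta_0$ and $\diam Q$, invoking \eqref{lem1} only on finitely many balls $B(z_m,2R2^{-j})$ (plus one ball enclosing $Q$) and then using Ahlfors regularity \eqref{eqah}. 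Your version leans harder on $s$-regularity (which is available here, and which the lower bound needs anyway, in both proofs, via Lemma \ref{leme} and $\mu_n(Q)\gtrsim(\diam Q)^s$), but it buys a cleaner handling of the quantifier ``for $n$ large'': you only ever need \eqref{lem1} for finitely many balls per cube, whereas the paper's dominated-convergence comparison needs $\mu_n(\overline{B}_{k,i})\lesssim\mu(\overline{B}_{k,i})$ along the whole family of cubes as $k\to\infty$ for a single large $n$, which requires extra care beyond a literal reading of \eqref{lem1}. One point you should make explicit: summing your geometric series uses $t<s$, which is not among the stated hypotheses; it is automatic because $t<\gamma$ and a nonzero measure on an Ahlfors $s$-regular space has infinite $\gamma$-energy when $\gamma>s$, so $\sup_n I_\gamma(\mu_n)<M$ together with the lower bound in \eqref{lem1} forces $\gamma\le s$ — add a sentence to that effect (or simply note that in all applications in the paper $\gamma\le s$).
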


\begin{rem}
	In Lemma \ref{lemiii}, the condition $\sup_{n\ge1}I_{\gamma}(\mu_n)<M$ can be weaken to $\sup_{n\ge N_0}I_{\gamma}(\mu_n)$ $<M$ for some $N_0\in\mathbb{N}$.
\end{rem}

\begin{proof}
 Fix any $Q\in\mathcal{Q}$. For any $\alpha>0$, given $y\in Q$,  let $Q_{\alpha}(y)=\{x\in Q\colon \rho(x,y)<{\alpha}^{-1/\gamma}\}$. Then 
    \begin{equation}\label{qm}
\phi_t(\mu_n, Q,y)=\int_{Q_{\alpha}(y)}\rho(x,y)^{-t} d\mu_n(x)+\int_{Q\setminus Q_{\alpha}(y)}\rho(x,y)^{-t} d\mu_n(x).
  \end{equation}
Since $I_{\gamma}(\mu_n)<M$, we have
  $$M>\iint \rho(x , y)^{-\gamma} d\mu_n(x)d\mu_n(y) \ge \int_Q\Big(\int_{Q_{\alpha}(y)} \rho(x , y)^{-\gamma} d\mu_n(x)\Big)d\mu_n(y)> \alpha\int_Q\mu_n(Q_{\alpha}(y))d\mu_n(y),$$
  and it follows that
     \begin{equation}\label{qm1}
    \begin{split}
&\int_Qd\mu_n(y)\int_{Q_{\alpha}(y)}\rho(x,y)^{-t} d\mu_n(x)=\int_Qd\mu_n(y)\int_{Q_{\alpha}(y)}d\mu_n(x) \int_0^{\rho(x,y)^{-t}}du   \\
    &=\int_Qd\mu_n(y)\int_{Q_{\alpha}(y)} d\mu_n(x) \int_0^{\alpha^{t/\gamma}}du +\int_Qd\mu_n(y)\int_{Q_{\alpha}(y)}d\mu_n(x)\int_{\alpha^{t/\gamma}}^{\rho(x,y)^{-t}}du\\
    &=\alpha^{t/\gamma}\int_Q\mu_n(Q_{\alpha}(y))d\mu_n(y)+\int_{{\alpha}^{t/\gamma}}^{\infty}du\int_Q d\mu_n(y)\int_{Q_{u^{\gamma/t}}(y)}d\mu_n(x) \\
&\le M{\alpha}^{t/\gamma-1} +M\int_{{\alpha}^{t/\gamma}}^{\infty}u^{-\gamma/t}du\\
&= M{\alpha}^{t/\gamma-1}  +\frac{Mt}{(\gamma-t)}{\alpha}^{t/\gamma-1}=\frac{M\gamma}{(\gamma-t)}{\alpha}^{t/\gamma-1},
    \end{split}
  \end{equation} 
where we apply Fubini's theorem to derive the third equality. For $\epsilon>0$, for $\alpha$ large enough, $\int_Qd\mu_n(y)\int_{Q_{\alpha}(y)}\rho(x,y)^{-t} d\mu_n(x) <\epsilon$.
Observe that
	    \[\int_Q d\mu_n(y) \int_{Q\setminus Q_{\alpha}(y)}\rho(x,y)^{-t}d\mu_n(x)\le \int_Q\int_Q\min\{\rho(x,y)^{-t},\alpha^{t/\gamma}\}d\mu_n(x)d\mu_n(y).\]
     Denote $f(x,y)=\min\{\rho(x,y)^{-t},\alpha^{t/\gamma}\}$. Note that $f(x,y)$ is continuous on $Q\times Q$. Let $\{Q_{k,i}\in\mathcal{Q}_k: 1\le i\le N(k)\}$ be a partition of $Q$ by Borel sets satisfying $\diam Q_{k,i}\to 0$ as $k\to\infty$, and take $(x_i,y_j)\in Q_{k,i}\times Q_{k,j}$, then for all $(x,y)\in Q\times Q$ we have
     \[\sum_{i,j}^{N(k)}f(x_i,y_j)\chi_{\{Q_{k,i}\times Q_{k,j}\}}(x,y)\to f(x,y),\ as\ k\to\infty.\]
        Since $f(x,y)\le \alpha^{t/\gamma}$, using Dominated Convergence Theorem, we obtain
        \begin{equation*}
    	\begin{split}
    		 \int_Q\int_Qf(x,y)d\mu_n(x)d\mu_n(y)&=\lim_{k\to\infty} \int_Q\int_Q\sum_{i,j}^{N(k)}f(x_i,y_j)\chi_{\{Q_{k,i}\times Q_{k,j}\}}(x,y)d\mu_n(x)d\mu_n(y)\\
    		 &=\lim_{k\to\infty}\sum_{i,j}^{N(k)}f(x_i,y_j)\mu_n(Q_{k,i})\mu_n(Q_{k,j})\\
    		 &\le\lim_{k\to\infty}\sum_{i,j}^{N(k)}f(x_i,y_j)\mu_n(\overline{B}_{k,i})\mu_n(\overline{B}_{k,j}),
        \end{split}
        \end{equation*}
       where $\overline{B}_{k,i}=\overline{B}(x_{k,i},c_1'b^k)$. Recall that $B(x_{k,i},c_1b^k)\subset Q_{k,i}\subset\overline{B}_{k,i}$, there exists a constant $C_2>0$ such that $\mu(\overline{B}_{k,i})\le C_2\mu(Q_{k,i})$. We derive from (\ref{lem1}) that
       \[\limsup_{n\to\infty}\frac{\mu_n(\overline{B})}{\mu(\overline{B})}\le C^2M.\]
       Then for any $\theta>0$, for $n$ large enough, we have
       \begin{equation*}
     	\begin{split}
    		\lim_{k\to\infty}\sum_{i,j}^{N(k)}f(x_i,y_j)\mu_n(\overline{B}_{k,i})\mu_n(\overline{B}_{k,j}) &\le(C^2M+\theta)^2\lim_{k\to\infty}\sum_{i,j}^{N(k)}f(x_i,y_j)\mu(\overline{B}_{k,i})\mu(\overline{B}_{k,j})\\
    		 &\le(C^2M+\theta)^2C_2^2\lim_{k\to\infty}\sum_{i,j}^{N(k)}f(x_i,y_j)\mu(Q_{k,i})\mu(Q_{k,j})\\
    		 &=(C^2M+\theta)^2C_2^2\int_Q\int_Qf(x,y)d\mu(x)d\mu(y)\\
    		 &\le(C^2M+\theta)^2C_2^2\int_Q\int_Q\rho(x,y)^{-t}d\mu(x)d\mu(y).
    	\end{split}
       \end{equation*}
       Since $\epsilon>0$ is arbitrary, we prove that
       \[\limsup\limits_{n\to\infty}\frac{I_t(\mu_n, Q)}{I_t(\mu,Q)}\le C^4M^2C_2^2.\]  

 Notice that $I_t(\mu_n,Q)\ge (\diam Q)^{-t}\mu_n(Q)^2$ and $\mu_n(Q)\ge C^{-1}2^{-s}(c_1/c_1')^s(\diam Q)^s$, then combining these with Lemma \ref{leme},
  there exists an absolute constant $C_3>0$ such that 
$$\frac{I_t(\mu_n, Q)}{I_t(\mu, Q)}\ge C_3.$$
Let $M_1=\max\{C_3^{-1},C^4M^2C_2^2\}$, then we proved this lemma.
  \end{proof}

\begin{lem}\label{lemi}
Under the same conditions as Lemma \ref{lemiii},  let $\{A_n\}_{n\ge1}$ be a sequence of open sets in $X$ satisfying $\mu_n(X\setminus A_n) = 0$, $n\ge1$. Then  $\limsup\limits_{n\to\infty} A_n \in \mathcal{G}^{\gamma}(X)$.
 \end{lem}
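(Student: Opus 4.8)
The plan is to verify, through the equivalent description $(\ref{ned})$, that $F:=\limsup_{n\to\infty}A_n$ lies in $\mathcal{G}^{\gamma}(X)$. Since each $U_N:=\bigcup_{n\ge N}A_n$ is open and $F=\bigcap_{N\ge1}U_N$, the set $F$ is $G_\delta$. As $\dim_{\rm H}X=s$ for an Ahlfors $s$-regular space and $\mathscr{M}_\infty^{t'}(Q)=\mu(Q)^{t'/s}$, it then suffices to produce a single constant $c\in(0,1]$, independent of the cube and of the exponent, with
\[\mathscr{M}_{\infty}^{t'}(F\cap Q)\ \ge\ c\,\mu(Q)^{t'/s}\qquad\text{for all }Q\in\Q\text{ and all }t'<\gamma.\]
We may assume $\gamma\le s$, since otherwise $\mathcal{G}^{\gamma}(X)$ is not defined; then every such $t'$ satisfies $t'<s$, so Lemma~\ref{leme} applies.

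Fix $Q\in\Q_k$, with centre $x_Q$ from $(\ref{cube})$, and fix $t'<\gamma$. The heart of the argument is to build an auxiliary measure concentrated on $F\cap Q$ with mass comparable to $\mu(Q)$ and controlled $t'$-energy. Let $B:=\overline{B}(x_{Q},\tfrac{c_1}{2}b^{k})$; by $(\ref{cube})$ this is a compact subset of $Q$ with $\mu(B)\asymp\mu(Q)$. From $(\ref{lem1})$, for all large $n$ the masses $\mu_n(B)$ are bounded above and bounded below by a fixed multiple of $\mu(Q)$, so weak-$*$ compactness on the compact space $B$ gives a subsequence along which $\mu_n|_B$ converges weak-$*$ to a measure $\nu$. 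I would then record three facts: $(i)$ $\nu(B)=\lim_n\mu_n(B)\ge c_4\,\mu(Q)$ for a constant $c_4>0$; $(ii)$ since $\rho(\cdot,\cdot)^{-t'}$ is lower semicontinuous, the $t'$-energy is lower semicontinuous for weak-$*$ convergence, so by Lemma~\ref{lemiii}, Lemma~\ref{leme} and $\diam Q\asymp\mu(Q)^{1/s}$,
\[I_{t'}(\nu)\ \le\ \liminf_n I_{t'}(\mu_n,B)\ \le\ \liminf_n I_{t'}(\mu_n,Q)\ \le\ M_1\,I_{t'}(\mu,Q)\ \le\ C_6\,\mu(Q)^{(2s-t')/s};\]
and $(iii)$ for each $N$, every $\mu_n|_B$ with $n\ge N$ is concentrated on $A_n\subset U_N$, and $B\setminus U_N$ is closed, so the Portmanteau inequality gives $\nu(B\setminus U_N)=0$; letting $N\to\infty$ shows $\nu$ is concentrated on $B\cap F\subset Q\cap F$.

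The last ingredient is a Frostman truncation followed by the mass distribution principle. Put $G:=\{\,y\in B:\phi_{t'}(\nu,y)\le 2I_{t'}(\nu)/\nu(B)\,\}$, so $\nu(G)\ge\nu(B)/2$ by Chebyshev, and the standard potential estimate gives $\nu|_G\big(B(x,r)\big)\le 2^{t'+1}\big(I_{t'}(\nu)/\nu(B)\big)r^{t'}$ for every ball; with $(\ref{cube})$ and Ahlfors regularity this upgrades to $\nu|_G(Q')\le C'''\big(I_{t'}(\nu)/\nu(B)\big)\mu(Q')^{t'/s}$ for every $Q'\in\Q$, with $C'''$ depending only on $C,b$. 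Since $\nu|_G$ is still concentrated on $Q\cap F$, the mass distribution principle for $\mathscr{M}_\infty^{t'}$ then gives
\[\mathscr{M}_{\infty}^{t'}(F\cap Q)\ \ge\ \frac{\nu(B)\,\nu|_G(F\cap Q)}{C'''\,I_{t'}(\nu)}\ =\ \frac{\nu(B)\,\nu(G)}{C'''\,I_{t'}(\nu)}\ \ge\ \frac{\nu(B)^2}{2\,C'''\,I_{t'}(\nu)}\ \ge\ \frac{c_4^2}{2\,C'''\,C_6}\,\mu(Q)^{t'/s},\]
using $2-(2s-t')/s=t'/s$. Taking $c:=\min\{1,\ c_4^2/(2C'''C_6)\}$, which depends only on $C,b,M,M_1$, finishes the proof.

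The step I expect to be the real obstacle is the bookkeeping that keeps all constants uniform in $Q$ and in $t'$: one must check that the constant $M_1$ in Lemma~\ref{lemiii} can be taken independent of $t'\in[0,\gamma)$ (which it is, since the powers of $\diam Q$ cancel in its proof), that replacing $Q$ by the strictly interior closed ball $B$ keeps the weak-$*$ limit inside $Q\cap F$ without spoiling $\nu(B)\asymp\mu(Q)$, and that the energy bounds combine to exactly the exponent $t'/s$. By contrast, the weak-$*$ lower semicontinuity of the $t'$-energy and the mass distribution principle for $\mathscr{M}_\infty^{t'}$ are standard and cause no difficulty.
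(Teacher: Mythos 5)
Your reduction to the criterion (\ref{ned}), the choice of a ball $B\subset Q$ with $\mu(B)\asymp\mu(Q)$, and the energy bookkeeping are all fine, but the construction of the measure $\nu$ has a fatal flaw at step $(iii)$, and it is precisely the central difficulty of the lemma, not the constant-tracking you flagged. The Portmanteau inequality for a closed set $C$ reads $\limsup_n\mu_n(C)\le\nu(C)$; applied to $C=B\setminus U_N$ it only yields $0\le\nu(B\setminus U_N)$, which is vacuous. The direction you need, $\nu(C)\le\liminf_n\mu_n(C)$ for closed $C$, is false in general: mass of measures concentrated on an open set can escape to its boundary in the weak-$*$ limit (think of $\delta_{1/n}\to\delta_0$ with $A_n=(0,1)$). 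Worse, under the very hypotheses of Lemma \ref{lemiii} the situation is hopeless, not just unproved: condition (\ref{lem1}) together with Portmanteau applied to open balls gives $\nu(B')\le M\,\mu(B')$ for every open ball $B'$, hence (by a standard covering argument, $\mu$ being doubling) $\nu\le CM\,\mu$ as measures. Since in the intended applications $\mu\bigl(\limsup_n A_n\bigr)=0$ is the typical case (that is exactly when the lemma has content), one gets $\nu(F\cap Q)\le CM\,\mu(F)=0$, so no Frostman truncation or mass distribution principle run on $\nu$ can ever produce a positive lower bound for $\mathscr{M}_{\infty}^{t'}(F\cap Q)$. In short: a weak-$*$ limit of measures supported on the individual sets $A_n$ does not see the limsup set, and trying to build a measure directly on $F\cap Q$ essentially amounts to reproving a mass transference principle by a Cantor-type construction, which is a much heavier task than what the lemma requires.

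The paper's proof sidesteps this entirely by never constructing a measure on $F$. For each fixed large $n$ it defines $\nu_n$ on $Q$ with density $\phi_t^{-1}(\mu_n,Q,\cdot)$ with respect to $\mu_n$, and uses Jensen's inequality twice: once, together with Lemma \ref{lemiii} and Lemma \ref{leme}, to get the uniform lower bound $\nu_n(Q)\ge c_3\mu(Q)^{t/s}$, and once to get the uniform upper bound $\nu_n(J)\le c_4\mu(J)^{t/s}$ for every subcube $J\subset Q$. Since $\mu_n$ (hence $\nu_n$) is concentrated on $A_n$, any cover of $Q\cap A_n$ by cubes carries mass at least $\nu_n(Q)$, which yields $\mathscr{M}_{\infty}^{t}(Q\cap A_n)\ge c_3c_4^{-1}\mu(Q)^{t/s}$ for each large $n$, hence the same bound for every tail union $\bigcup_{n\ge m}A_n$; these open sets therefore lie in $\mathcal{G}^{\gamma}(X)$, and the limsup set is handled by the closure of $\mathcal{G}^{\gamma}(X)$ under countable intersections (Proposition \ref{pro}(3)). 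That final appeal to the intersection-stability of the class is the step your argument is missing and cannot avoid.
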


 \begin{proof}
 Let $Q\in\mathcal{Q}$ and $t<\gamma$. 
 By (\ref{lem1}), $\mu_n(Q)>0$ for $n$ large enough. Define 
  \[\nu_n(F)=\int_{F\cap Q}\phi_t^{-1}(\mu_n,Q,y)d\mu_n(y)\]
for any $F\in\mathscr{B}$. Since $\mu_n(X\setminus A_n)=0$, $\nu_n(Q)=\nu_n(Q\cap A_n)$. Given $\epsilon>0$, combining Jensen's inequality \cite{jens} and Lemma \ref{lemiii}, for $n$ large enough,  we have
\begin{equation}\label{lower}
\begin{split}
\nu_n(F)&=\mu_n(F\cap Q)\int_{F\cap Q}\Big( \int_Q \rho(x,y)^{-t} d\mu_n(x)\Big)^{-1}\frac{1}{\mu_n(F\cap Q)}d\mu_n(y)\\
&\ge \mu_n(F\cap Q)\Big(\int_{F\cap Q} \Big(\int_Q \rho(x,y)^{-t} d\mu_n(x)\Big) \frac{1}{\mu_n(F\cap Q)} d\mu_n(y)\Big)^{-1}\\
&\ge (\mu_n(F\cap Q))^2\Big(\int_{Q} \int_Q \rho(x,y)^{-t}d\mu_n(y) d\mu_n(x)\Big)^{-1}\\
& >(\mu_n(F\cap Q))^2(M_1+\epsilon)^{-1}\Big(I_t(\mu,Q)\Big)^{-1}.
\end{split}
\end{equation}
Hence  $\nu_n$ is a nonzero measure and absolutely continuous with respect to $\mu_n$.  By Lemma \ref{leme} and (\ref{lower}), for $n$ large enough, we obtain
\be\label{nu1}
\nu_n(Q)\ge (M_1+\epsilon)^{-1}\frac{(\mu_n(Q))^2}{I_t(\mu,Q)}
\ge C_1^{-1}(M_1+\epsilon)^{-1}\frac{(\mu_n(Q))^2}{(\diam Q)^{s-t}\mu(Q)}= c_3\mu(Q)^{t/s},
\ee
where $c_3>0$ independ of $Q$ and $n$. By Jensen's inequality,  
\[\phi_t^{-1}(\mu_n,Q,y) =\frac{1}{\mu_n(Q)}\Big(\int_Q \frac{\rho(x,y)^{-t}}{\mu_n(Q)}d\mu_n(x)\Big)^{-1}\le \frac{1}{(\mu_n(Q))^2}\int_Q \rho(x,y)^{t}d\mu_n(x) 
.\]

Then for any $J\in\mathcal{Q}$ and $J\subset Q$, 
we get that
\be\label{nu2}
\begin{split}
\nu_n(J)&=\int_{J} \phi_t^{-1}(\mu_n,Q,y) d\mu_n(y)\le\int_{J} \phi_t^{-1}(\mu_n,J,y) d\mu_n(y)\\
&\le \frac{1}{(\mu_n(J))^2}\int_{J} \int_J \rho(x,y)^{t}d\mu_n(x) d\mu_n(y)\\
&\le (\diam J)^t\le c_4\mu(J)^{t/s}
\end{split}
\ee
for $n$ large enough and $c_4>0$ is an absolute constant.

  Let $\{Q_k\}_{k\ge1}$ be a cover of $Q\cap A_n$, where $Q_k\in\mathcal{Q}$. Without loss of generality,  we assume $Q_k\cap Q_j=\varnothing$ for $k\ne j$ and $Q_k\subset Q$. For $n$ large enough, we derive from (\ref{nu1}) and (\ref{nu2}) that
  \[ \sum_{k}\mu( Q_k)^{t/s}\ge c_4^{-1}\sum_k\nu_n(Q_k)\ge c_4^{-1}\nu_n(Q\cap A_n)=c_4^{-1}\nu_n(Q)\ge c_3c_4^{-1}\mu(Q)^{t/s}.\]
 It follows that 
 \[ \liminf_{n\to\infty}\mathscr{M}_{\infty}^t(Q\cap A_n)\ge c_3c_4^{-1}\mu(Q)^{t/s}\] 
 for $n$ large enough. Therefore, for any $m\ge1$ and generalized dyadic cube $Q$,
\[\mathscr{M}_{\infty}^t\Big(\bigcup_{n\ge m}A_n\cap Q\Big)\ge \sup_{n\ge m}\mathscr{M}_{\infty}^t(A_n\cap Q)\ge c_3c_4^{-1} \mu(Q)^{t/s} .\quad(\forall t<\gamma)\]
So $\bigcup_{n\ge m}A_n\in\mathcal{G}^{\gamma}(X)$ by (\ref{ned}). Thus from (3) in Proposition \ref{pro}, we prove that
$$\limsup_{n\to\infty}A_n=\bigcap_{m\ge 1}\bigcup_{n\ge m}A_n\in\mathcal{G}^{\gamma}(X).$$
 
  \end{proof}

Recall the 5r-covering theorem in metric space.
\begin{lem}[\cite{ma}]\label{vitali}
Let $(X, \rho)$ be a separable metric space and $\mathcal{A}$ be a family either of closed balls or open balls such that
\[\sup\{\diam B\colon B\in\mathcal{A}\}<\infty.\]
Then there is a finite or countable sequence $\{B_i\}_{i\in I}$ of pairwise disjoint balls such that
\[\bigcup_{B\in\mathcal{A}}B\subset \bigcup_{i\in I}5B_i.\]
\end{lem}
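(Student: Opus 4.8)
The plan is to prove the $5r$-covering lemma by the standard greedy selection argument organized by scale, a method that uses no measure and is therefore valid in an arbitrary separable metric space. Write $R=\sup\{\diam B\colon B\in\mathcal{A}\}$, finite by hypothesis, and set aside the trivial case $\mathcal{A}=\varnothing$. Because the supremum need not be attained, I will not try to extract a single largest ball; instead I sort the balls of positive diameter into \emph{generations}
\[\mathcal{A}_n=\Bigl\{B\in\mathcal{A}\colon \tfrac{R}{2^{n}}<\diam B\le\tfrac{R}{2^{n-1}}\Bigr\}\qquad(n\ge1),\]
so that if $B\in\mathcal{A}_n$, $B'\in\mathcal{A}_m$ and $m\le n$, then $\diam B\le 2\diam B'$. (Any singleton balls of zero diameter are contained in every ball meeting them and are disposed of trivially.)

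Next I construct the disjoint subfamily generation by generation. By Zorn's lemma---the union of a chain of pairwise disjoint subfamilies is again pairwise disjoint---choose a maximal pairwise disjoint $\mathcal{F}_1\subset\mathcal{A}_1$; having chosen $\mathcal{F}_1,\dots,\mathcal{F}_{n-1}$, let $\mathcal{F}_n\subset\mathcal{A}_n$ be a maximal pairwise disjoint subfamily among those balls of $\mathcal{A}_n$ meeting no ball already placed in $\mathcal{F}_1\cup\dots\cup\mathcal{F}_{n-1}$. Set $\mathcal{F}=\bigcup_{n\ge1}\mathcal{F}_n$, which is pairwise disjoint by construction. Since every ball contains its centre, a pairwise disjoint family of balls picks out distinct points of a countable dense subset of $X$, so $\mathcal{F}$ is finite or countable and may be enumerated as $\{B_i\}_{i\in I}$.

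The heart of the proof is the covering step. Fix $B\in\mathcal{A}$, say $B\in\mathcal{A}_n$. By maximality of $\mathcal{F}_n$, either $B\in\mathcal{F}_n$, in which case $B\subset 5B$ trivially, or $B$ meets some $B'=B(x',r')\in\mathcal{F}_1\cup\dots\cup\mathcal{F}_n$ with $B'\in\mathcal{A}_m$, $m\le n$. In the latter case $\diam B\le 2\diam B'\le 4r'$, using that the diameter of any ball is at most twice its radius. Choosing $z\in B\cap B'$, for every $y\in B$ both $y$ and $z$ lie in $B$, so
\[\rho(y,x')\le\rho(y,z)+\rho(z,x')\le\diam B+r'\le 5r',\]
whence $y\in 5B'$ and therefore $B\subset 5B'$. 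Taking the union over all $B\in\mathcal{A}$ gives $\bigcup_{B\in\mathcal{A}}B\subset\bigcup_{i\in I}5B_i$, as required.

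The step I expect to be the main obstacle, conceptually rather than computationally, is the organization into generations: since $R$ need not be attained one cannot simply ``pick the largest remaining ball'', and the dyadic grouping---together with the observation that an earlier-selected ball meeting $B$ has diameter at least $\tfrac12\diam B$---is exactly what makes the greedy scheme run while keeping the dilation factor equal to $5$. Carrying the estimate with the diameter $\diam B$ rather than with the radius of $B$ is what lets the single computation $\rho(y,x')\le\diam B+r'\le 5r'$ cover the open-ball and closed-ball cases uniformly, and separability enters only to guarantee that the resulting disjoint family is countable.
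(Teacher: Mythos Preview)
The paper does not supply a proof of this lemma at all; it is quoted verbatim from Mattila's \emph{Geometry of Sets and Measures in Euclidean Spaces} and used as a black box. Your argument is the standard proof (essentially the one in Mattila and in Heinonen's \emph{Lectures on Analysis on Metric Spaces}) and it is correct. One cosmetic point: in the open-ball case the displayed chain $\rho(y,x')\le\diam B+r'\le 5r'$ only yields $y\in\overline{B}(x',5r')$ as written; the conclusion $y\in 5B'=B(x',5r')$ is nevertheless valid because $z\in B'$ gives the strict inequality $\rho(z,x')<r'$, so one of the two $\le$'s is in fact strict. You might record that explicitly.
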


Now we prove Theorem \ref{main}.

\noindent\textbf{Proof of Theorem \ref{main}}: 
Since $\mu\Big(\limsup\limits_{n\to\infty}B_n\Big)=1$, for $n\ge1$, there is some $N_n$ such that 
$$\mu\Big(\bigcup_{m=n}^{N_n}B_m\Big)>1-\frac{1}{2n}.$$
Denote $A_n=\bigcup_{m=n}^{N_n}B_m$. By Lemma \ref{vitali}, there is $\mathcal{I}_n\subset \{n, n+1,\dots N_n\}$ such that 
  \[B_i\cap B_j=\varnothing, {\rm ~for~}i\ne j\in\mathcal{I}_n\quad{\rm and}\quad A_n\subset\bigcup_{m\in\mathcal{I}_n}5B_m.\]
(i) Construct measures
 
 Let $A'_n=\bigcup_{m\in\mathcal{I}_n}B_m$. Since 
 \[\mu(A'_n)=\sum_{m\in\mathcal{I}_n}\mu(B_m)\ge C^{-2}5^{-s}\sum_{m\in\mathcal{I}_n}\mu(5B_m)>C^{-2}5^{-s}(1-\frac{1}{2n})>0,\]
and $\mu(A'_n)<\infty$, we define $$\eta_n(F)=\frac{1}{\mu(A'_n)}\mu(F\cap A'_n)$$ for any  $F\in\mathscr{B}$. Then $\eta_n$ is a nonzero probability measure supported on $A'_n$ satisfying $\eta_n(A'_n)=\eta_n(X)=1$.
  
 Let $F_n=\bigcup_{m\in\mathcal{I}_n}E_m$. Define
 \[\mu_n(F)=\sum_{m\in\mathcal{I}_n}\eta_n(B_m)\frac{\mu(E_m\cap F)}{\mu(E_m)}\]
 for $F\in\mathscr{B}$. Then $\mu_n$ is a nonzero probability measure supported on $F_n$ satisfying $\mu_n(F_n)=\mu_n(X)=1$.
 
Let $B\subset X$ be a ball. Firstly we will show the following inequalities

\be\label{nu}
C^{-4}5^{-s}\le \liminf_{n\to\infty}\frac{\eta_n(B)}{\mu(B)}\le \limsup_{n\to\infty}\frac{\eta_n(B)}{\mu(B)}\le C^45^s.
\ee
Denote $B=B(x_B,r_B)$. Since $r_n$ decreases to 0 as $n\to\infty$, for any $\theta>0$, there is some $N>0$ such that for $n\ge N$, $m\in  \mathcal{I}_n$, we have $r_m<r_B\theta/2$. Note that 
$$\bigcup_{m\in \mathcal{I}_n\atop B\cap B_m\ne\varnothing}B_m\subset (1+\theta)B,$$
hence 
\[
\eta_n(B)=\frac{1}{\mu(A'_n)}\mu(B\cap A'_n)\le\frac{1}{\mu(A'_n)}\mu\Big( \bigcup_{m\in \mathcal{I}_n\atop B\cap B_m\ne\varnothing}B_m\Big)\le 
\frac{\mu((1+\theta)B)}{\mu(A'_n)}<\frac{C^2(1+\theta)^s\mu(B)}{C^{-2}5^{-s}(1-1/2n)},
\]
 which implies 
$$\limsup_{n\to\infty}\frac{\eta_n(B)}{\mu(B)}\le C^45^s.$$
 Let $\mathcal{I}_n(B)=\{m\in \mathcal{I}_n\colon B_m\subset B\}$, then we have
\begin{equation*}
\frac{\eta_n(B)}{\mu(B)}\ge  \frac{1}{\mu(A'_n)\mu(B)}\mu\Big( \bigcup_{m\in \mathcal{I}_n(B)}B_m\cap A'_n\Big)\ge 
\frac{1}{\mu(B)}\sum_{m\in \mathcal{I}_n(B)}\mu( B_m).
\end{equation*}
We claim that 
 \be\label{cla}
 \liminf\limits_{n\to\infty}\sum_{m\in\mathcal{I}_n(B)}\mu(B_m)\ge C^{-4}5^{-s}\mu(B).
 \ee
  Then
\begin{equation*}
\liminf_{n\to\infty}\frac{\eta_n(B)}{\mu(B)}\ge\liminf_{n\to\infty}\frac{1}{\mu(B)}\sum_{m\in \mathcal{I}_n(B)}\mu( B_m)\ge 
C^{-4}5^{-s}.
\end{equation*}
Now we prove the claim. For any $0<\epsilon<1/2$, take $n$ large enough such that $\mu(B)>C^22^{s-1}/n$, and 
 $r_m<\epsilon r_B/10,~m\in \mathcal{I}_n$. Then if $m\in \mathcal{I}_n\setminus \mathcal{I}_n(B)$, that is, $B^c\cap B_m\ne\varnothing$, we have $5B_m\cap (1-\epsilon)B=\varnothing$. Therefore
\begin{equation*}
\begin{split}
 \mu\Big((1-\epsilon)B\cap A_n\Big)&\le \mu\Big(\bigcup_{m\in \mathcal{I}_n(B)}5B_m\Big)+\mu\Big(\bigcup_{m\in \mathcal{I}_n\setminus \mathcal{I}_n(B)}5B_m\cap (1-\epsilon)B\Big)\\
 &=\mu\Big(\bigcup_{m\in \mathcal{I}_n(B)}5B_m\Big)\le \sum_{m\in \mathcal{I}_n(B)}\mu(5B_m)\le C^25^s\mu\Big(\bigcup_{m\in \mathcal{I}_n(B)}B_m\Big).
 \end{split}
 \end{equation*}
We also note that
\begin{equation*}
 \mu\Big((1-\epsilon)B\cap A_n\Big)\ge  \mu\Big((1-\epsilon)B\Big)-\frac{1}{2n}\ge C^{-2}(1-\epsilon)^s\mu(B)-\frac{1}{2n}.
 \end{equation*}
Combining the inequalities above, we prove the claim.

Secondly we will prove that
\begin{equation*}\label{mu}
 C^{-8}5^{-2s}\le \liminf_{n\to\infty}\frac{\mu_n(B)}{\mu(B)}\le \limsup_{n\to\infty}\frac{\mu_n(B)}{\mu(B)}\le C^65^s
 \end{equation*}
hold for any ball $B$. Given any $\theta>0$, Since  $\bigcup_{m\in\mathcal{I}_n\atop B_m\cap B\ne\varnothing}B_m\subset (1+\theta)B$ holds for $n$ large enough, we obtain
 \begin{equation*}
\begin{split}
\mu_n(B)&=\sum_{m\in\mathcal{I}_n}\eta_n(B_m)\frac{\mu(E_m\cap B)}{\mu(E_m)}= \sum_{m\in\mathcal{I}_n\atop B_m\cap B\ne\varnothing}\eta_n(B_m)\frac{\mu(E_m\cap B)}{\mu(E_m)}\\
&\le\sum_{m\in\mathcal{I}_n\atop B_m\cap B\ne\varnothing}\eta_n(B_m)\le \eta_n((1+\theta)B)\le (C^45^s+\theta)(1+\theta)^{s}C^2\mu(B).
\end{split}
\end{equation*}
The last inequality follows from (\ref{nu}). So $\limsup\limits_{n\to\infty}\frac{\mu_n(B)}{\mu(B)}\le C^65^s$.

We derive from (\ref{nu}) and (\ref{cla}) that 
\begin{equation*}
\begin{split}
\liminf_{n\to\infty}\frac{\mu_n(B)}{\mu(B)}&=\liminf_{n\to\infty}\frac{1}{\mu(B)}\sum_{m\in\mathcal{I}_n}\eta_n(B_m)\frac{\mu(E_m\cap B)}{\mu(E_m)}\\
&\ge \liminf_{n\to\infty}\frac{1}{\mu(B)}\sum_{m\in\mathcal{I}_n(B)}\eta_n(B_m)\\
&\ge \liminf_{n\to\infty}\frac{C^{-4}5^{-s}}{\mu(B)}\sum_{m\in\mathcal{I}_n(B)}\mu(B_m)\\
&\ge C^{-8}5^{-2s}.
\end{split}
\end{equation*}
(ii) Show that $\sup_{n\ge N}I_t(\mu_n)<\infty$ for some $N\ge 1$
 
Note that 
 \begin{equation*}
\begin{split}
I_t(\mu_n)&=\int_X\int_X \rho(x , y)^{-t}d\mu_n(x)d\mu_n(y)=\int_{F_n}\int_{F_n} \rho(x , y)^{-t}d\mu_n(x)d\mu_n(y)\\
&=\sum_{m\in\mathcal{I}_n}\sum_{i\in\mathcal{I}_n}\int_{E_m}\int_{E_i} \rho(x , y)^{-t}d\mu_n(x)d\mu_n(y).
\end{split}
\end{equation*}
When $i=m$, since $\mu_n|_{E_m}=\frac{\eta_n(B_m)}{\mu(E_m)}\mu|_{E_m}$, we have
 \begin{equation*}
\begin{split}
& \int_{E_m}\int_{E_m} \rho(x , y)^{-t}d\mu_n(x)d\mu_n(y)=\Big(\frac{\eta_n(B_m)}{\mu(E_m)}\Big)^2\int_{E_m}\int_{E_m} \rho(x , y)^{-t}d\mu(x)d\mu(y)\\
&=\Big(\frac{\eta_n(B_m)}{\mu(E_m)}\Big)^2 I_t(\mu,E_m)\le (C^45^s+\theta)^2\Big(\frac{\mu(B_m)}{\mu(E_m)}\Big)^2 I_t(\mu,E_m)
\end{split}
\end{equation*} 
 for $n$ large enough. Due to $t<\lambda$, there is some absolute constant $M_3>0$ such that 
\[\sup_{n\ge1}\frac{I_t(\mu, E_n)\mu(B_n)}{\mu(E_n)^2}<M_3. \]
Hence 
 \begin{equation}\label{same}
\int_{E_m}\int_{E_m} \rho(x , y)^{-t}d\mu_n(x)d\mu_n(y)\le (C^45^s+\theta)^2\Big(\frac{\mu(B_m)}{\mu(E_m)}\Big)^2 I_t(\mu,E_m)
\le (C^45^s+\theta)^2M_3\mu(B_m).
\end{equation}

For $i\ne m$,
we suppose that there is a ball $cB_m ~(0<c<1)$ such that $E_m\subset cB_m$ for any $m\ge1$. Otherwise, we consider $c^{-1}B_m$ instead of $B_m$ in the previous proof. For $x\in E_m,~y\in E_i$, we get
$$\rho(x,y)\ge \rho(x_i,x_m)-c(r_m+r_i)\ge (1-c)\rho(x_i,x_m).$$ 
Then
\begin{equation*}
\begin{split}
& \int_{E_m}\int_{E_i} \rho(x , y)^{-t}d\mu_n(x)d\mu_n(y)\le \int_{E_m}\int_{E_i} ((1-c)\rho(x_i,x_m))^{-t}d\mu_n(x)d\mu_n(y)\\
&=((1-c)\rho(x_i,x_m))^{-t}\mu_n(E_m)\mu_n(E_i)=((1-c)\rho(x_i,x_m))^{-t}\eta_n(B_m)\eta_n(B_i)\\
&\le (1-c)^{-t}(C^45^s+\theta)^2\rho(x_i,x_m)^{-t} \mu(B_m)\mu(B_i)\\
&\le (1-c)^{-t}(C^45^s+\theta)^22^t\int_{B_m}\int_{B_i} \rho(x,y)^{-t}d\mu(x)d\mu(y).
\end{split}
\end{equation*}  
The last inequality follows from $\rho(x,y)\le \rho(x_i,x_m)+r_i+r_m\le 2\rho(x_i,x_m)$ for $x\in B_m,~y\in B_i$.

Therefore by Lemma \ref{leme} and the fact that $\{B_m\}_{m\in\mathcal{I}_n}$ are disjoint balls, for $n$ large enough, we obtain
\begin{equation*}
\begin{split}
I_t(\mu_n)& \le (C^45^s+\theta)^2\sum_{m\in\mathcal{I}_n}\Big(\frac{\mu(B_m)}{\mu(E_m)}\Big)^2I_t(\mu,E_m)\\
&+\frac{(C^45^s+\theta)^22^t}{(1-c)^{t}} \sum_{m\in\mathcal{I}_n}\sum_{i\in\mathcal{I}_n\atop i\ne m}\int_{B_m}\int_{B_i}\rho(x,y)^{-t}d\mu(x)d\mu(y)\\
&\le (2C^45^s)^2M_3\sum_{m\in\mathcal{I}_n}\mu(B_m)+(1-c)^{-t}(2C^45^s)^22^tI_t(\mu)\\
&\le (2C^45^s)^2M_3+(1-c)^{-t}(2C^45^s)^22^tC_1\max\{1,(\diam X)^{s}\}<\infty.
\end{split}
\end{equation*}  
By Lemma \ref{lemi}, $\limsup\limits_{n\to\infty} F_n\in \mathcal{G}^t(X)$ for all $t<\lambda$, hence $\limsup\limits_{n\to\infty} F_n\in \mathcal{G}^{\lambda}(X)$ which implies that $\limsup\limits_{n\to\infty} E_n\in \mathcal{G}^{\lambda}(X)$.  $\hfill\square$

\noindent\textbf{Proof of Corollary \ref{mainn}}: 
Without loss of generality, we assume that $\diam B_n<1$. Given $t\in(0,s]$, write $F_n=B_n^t$, then $\mu\Big(\limsup\limits_{n\to\infty}F_n\Big)=1$.  For $\beta\ge0$, by Lemma \ref{leme},  we have
$$C^{-1}2^{-\beta}r_n^{t-\beta}\le\frac{I_{\beta}(\mu,B_n)\mu(F_n)}{\mu(B_n)^2}\le C^2C_12^{s-\beta}r_n^{t-\beta}
.$$
Notice that if $\beta\le t$, we have $\sup_{n\ge1} \frac{I_{\beta}(\mu,B_n)\mu(F_n)}{\mu(B_n)^2}<\infty$, and if $\beta>t$, then $\limsup\limits_{n\to\infty} \frac{I_{\beta}(\mu,B_n)\mu(F_n)}{\mu(B_n)^2}=\infty$, since $r_n$ decreases to 0 as $n\to\infty$. Hence 
\[t=\sup\Big\{\beta\ge0\colon \sup_{n\ge1} \frac{I_{\beta}(\mu,B_n)\mu(F_n)}{\mu(B_n)^2}<\infty\Big\},\]
and we applying Theorem \ref{main} to have $\limsup\limits_{n\to\infty}B_n\in\mathcal{G}^t(X)$.      $\hfill\square$

\section{Applications}
Let $(X,\rho)$ be a compact metric space endowed with an Ahlfors $s$-regular probability measure $\mu$. In this section we investigate the large intersection property of  limsup random fractals, random covering sets, and limsup sets generated by rectangles.
\subsection{Limsup random fractals}
\ 
\newline
\\
\indent
Given $0<b<1/3$, there is a family of generalized dyadic cubes of $X$ defined in Subsection 2.1. In this subsection, we consider a model of limsup random fractals which is constructed by using generalized dyadic cubes $\Q=\bigcup_{n\ge0}\Q_n$.

 For $n\ge1$, let $\Big\{Z_n(Q),Q\in \mathcal{Q}_n\Big\}$ be a sequence of random variables, each taking values in $\{0,1\}$. 
Let
\[A(n)=\bigcup_{Q\in \mathcal{Q}_n,\atop Z_n(Q)=1}{Q^o},
\]
where $Q^o$ is the interior of $Q$. The random set 
\[A=\limsup_{n\to\infty}A(n)\]
is called a limsup random fractal associated to $\big\{Z_n(Q),Q\in \mathcal{Q}_n,n\ge 1\big\}$. 

For $n\ge1$, and $Q\in \mathcal{Q}_n$, denote the probability $P_n(Q):=\mathbb{P}(Z_n(Q)=1)$, and 
\begin{align}
 \gamma_1\colon&=-\limsup_{n\to\infty}\frac{\max_{Q\in\mathcal{Q}_n}\log_{b^{-1}}P_n(Q) }{n}, \label{22}\\
 \gamma_2\colon&=-\limsup_{n\to\infty}\frac{\min_{Q\in\mathcal{Q}_n}\log_{b^{-1}}P_n(Q) }{n}. \label{23}
\end{align}
We refer to $\gamma_1$ and $\gamma_2$ as the indices of the limsup random fratcal $A$.

We assume the following condition, which is similar as the condition (H2) in \cite{HLX}.

${\bf Correlation~ Condition}$:\, Suppose that there is a constant $\delta\ge0$ such that for all $\epsilon>0$, 
\begin{equation}\label{ast}
	\limsup_{n\to\infty}\frac{1}{n}\log_{b^{-1}}f(n,\epsilon)\le\delta,
\end{equation}
where 
\[f(n,\epsilon)=\max_{Q\in \mathcal{Q}_n}\#\{Q'\in \mathcal{Q}_n\colon{\rm Cov}(Z_n(Q),Z_n(Q'))\ge\epsilon P_n(Q)P_n(Q')\}.\]

\begin{thm}\label{limsup}
Let $(X,\mathscr{B},\mu,\rho)$ be a compact Ahlfors $s$-regular space $(0<s<\infty)$.   Let $A=\limsup\limits_{n\to\infty}A(n)$ be a limsup random set with indices $\gamma_1$, $\gamma_2$ and satisfy the Correlation Condition. If $\gamma_2+\delta<s$, then $A\in \G^{s-\gamma_2-\delta}(X)$ a.s.
\end{thm}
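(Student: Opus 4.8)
The plan is to deduce Theorem \ref{limsup} from Theorem \ref{main} by packaging the random sets $A(n)$ into a collection of sequences of balls and open subsets to which the deterministic machinery applies, with the probabilistic input reduced to showing that a suitable limsup of cubes has full measure almost surely. First I would fix $\epsilon>0$ small and $\gamma_2<\gamma_2'$, $\delta<\delta'$ close to $\gamma_2,\delta$. For each $n$, the cubes $Q\in\mathcal{Q}_n$ satisfy $\mu(Q)\asymp b^{ns}$ by Proposition \ref{num}, and by the definition \eqref{23} of $\gamma_2$ we have $P_n(Q)\ge b^{n\gamma_2'}$ for all $n$ large. I would estimate, via a second-moment (Paley--Zygmund type) argument, the probability that a given cube $Q'\in\mathcal{Q}_m$ (at a coarser scale $m<n$) contains a cube $Q\in\mathcal{Q}_n$ with $Z_n(Q)=1$; writing $N_m=\#\{Q\in\mathcal{Q}_n:Q\subset Q'\}\asymp b^{-(n-m)s}$ and $S=\sum Z_n(Q)$ over those $Q$, one has $\mathbb{E}S\ge N_m b^{n\gamma_2'}$ and, using the Correlation Condition to bound $\mathrm{Var}\,S$, $\mathrm{Var}\,S \le \mathbb{E}S + f(n,\epsilon)\,\mathbb{E}S + \epsilon(\mathbb{E}S)^2$ roughly, so that $\mathbb{P}(S>0)\ge (\mathbb{E}S)^2/\mathbb{E}(S^2)$ is bounded below by a constant once $b^{n(s-\gamma_2'-\delta')}\to\infty$, i.e. precisely when $s-\gamma_2-\delta>0$.

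Next, the standard device for limsup random fractals (as in \cite{HLX}) is to use a zero--one law / Borel--Cantelli-with-independence-across-scales argument: choosing a sparse subsequence $n_k$ of scales and exploiting that distinct scales can be coupled or that the events ``$Q'\cap A(n_k)\ne\varnothing$'' for fixed $Q'$ happen infinitely often with probability one, one concludes that a.s., for every generalized dyadic cube $Q'$, $Q'\cap A(n)\ne\varnothing$ for infinitely many $n$. Since $\mathcal{Q}$ is countable, the almost-sure statement holds simultaneously for all $Q'\in\mathcal{Q}$. From this I would extract, a.s., a sequence of cubes $Q_{(j)}\in\mathcal{Q}$ with $\diam Q_{(j)}\to 0$, each $Q_{(j)}$ of the form of a cube hit by some $A(n)$, such that every cube in $\mathcal{Q}$ contains infinitely many of the $Q_{(j)}$; consequently $\mu\big(\limsup_j Q_{(j)}\big)=1$ a.s., because the complement would be contained in a cube disjoint from all but finitely many $Q_{(j)}$.

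Now I would apply Theorem \ref{main} with $B_n := $ a ball comparable to $Q_{(n)}$ (using \eqref{cube}, $B(x_{Q_{(n)}},c_1 b^{k})\subset Q_{(n)}\subset \overline B(x_{Q_{(n)}},c_1'b^k)$, so take $B_n=\overline B(x_{Q_{(n)}}, c_1'b^k)$, or rather an open ball slightly larger) and $E_n := $ the interior of the actual cube of $\mathcal{Q}_n$ with $Z_n=1$ that sits inside $B_n$. Then $\mu(E_n)\asymp\mu(B_n)\asymp b^{ns}$ for the relevant $n$, and $I_t(\mu,E_n)\le C_1(\diam E_n)^{s-t}\mu(E_n)\asymp b^{n(2s-t)}$ by Lemma \ref{leme}, whence $I_t(\mu,E_n)\mu(B_n)/\mu(E_n)^2 \lesssim b^{n(s-t)}$, which is bounded in $n$ for every $t\le s$; more care is needed to see that the critical index is exactly $s-\gamma_2-\delta$ rather than $s$. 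This is where the indices re-enter: the set of cubes we are entitled to use is thinned out — only a $b^{n\gamma_2'}$-fraction (modulo correlations costing $b^{n\delta'}$) of cubes at scale $n$ can be guaranteed to be ``active,'' so after the $5r$-covering/disjointification inside Theorem \ref{main}'s proof the effective ratio $\mu(E_n)/\mu(B_n)$ one can control degrades to $b^{n(\gamma_2'+\delta')}$, giving $I_t(\mu,E_n)\mu(B_n)/\mu(E_n)^2\lesssim b^{n(s-t-\gamma_2'-\delta')}$ and hence $\lambda = s-\gamma_2-\delta$ after letting $\epsilon\to 0$, $\gamma_2'\downarrow\gamma_2$, $\delta'\downarrow\delta$.

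The main obstacle I anticipate is this last bookkeeping: correctly identifying the balls $B_n$ and open sets $E_n$ so that the ratio in the definition of $\lambda$ in Theorem \ref{main} reflects the loss $\gamma_2+\delta$, rather than either proving a weaker bound ($\lambda = 0$ or some non-sharp value) or needing information Theorem \ref{main} does not provide. Concretely one must arrange that $E_n$ is genuinely an open subset of a ball $B_n$ with $\mu(B_n)/\mu(E_n)\asymp b^{-n(\gamma_2+\delta+o(1))}$ while still $\mu(\limsup B_n)=1$ — i.e. the ``large'' balls $B_n$ must tile enough of $X$ while the ``active'' pieces $E_n$ inside them are correspondingly sparse. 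Handling the dependence across scales in the full-measure claim (the zero--one law) cleanly, without an independence assumption, is the other delicate point, and I would address it by the same subsequence-and-conditioning argument used in the limsup random fractal literature \cite{HLX, fp}.
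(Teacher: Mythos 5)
There is a genuine gap, and it sits exactly at the point you flag as delicate. First, the deduction ``every cube in $\mathcal{Q}$ contains infinitely many of the $Q_{(j)}$, consequently $\mu\big(\limsup_j Q_{(j)}\big)=1$ a.s.'' is false: a point outside the limsup lies in only finitely many $Q_{(j)}$, but this gives no cube around it disjoint from all but finitely many $Q_{(j)}$, and a limsup set can meet every cube (be dense) while being $\mu$-null. Indeed, if the $Q_{(j)}$ are taken at the natural scale of the active cubes, their limsup is essentially $A$ itself, which typically has $\dim_{\rm H}A\le s-\gamma_1<s$ (Corollary \ref{eslim}), hence $\mu$-measure zero. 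Second, with your choice $B_n\asymp Q_{(n)}$ and $E_n$ the active cube inside, Lemma \ref{leme} gives $I_t(\mu,E_n)\mu(B_n)/\mu(E_n)^2\lesssim (\diam E_n)^{s-t}$, bounded for every $t\le s$, so Theorem \ref{main} would (if the full-measure hypothesis held) return the exponent $s$, not $s-\gamma_2-\delta$; and the proposed fix, that the $5r$-covering/disjointification inside the proof of Theorem \ref{main} ``thins'' the ratio by $b^{n(\gamma_2'+\delta')}$, is not a mechanism that proof contains — it takes the pairs $E_n\subset B_n$ as given and never discards sets according to probabilities. So the step where $\gamma_2+\delta$ actually enters is missing. (A smaller slip: since $\gamma_2$ is defined by a $\limsup$ in \eqref{23}, $\min_Q P_n(Q)\ge b^{n(\gamma_2+\theta)}$ holds only along a subsequence, not for all large $n$; this is harmless but must be tracked.)

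The correct bookkeeping forces the probabilistic work to be done at the \emph{enlarged} scale, which is how the paper proceeds: for $t<s-\gamma_2-\delta$ one considers, around each active cube $Q\in\mathcal{Q}_n$, the enlarged ball $B_Q^{t}=B(x_Q,(c_1b^n)^{t/s})$; for a fixed $x$, the number of scale-$n$ cubes inside a ball of radius $(c_1b^n)^{t/s}$ is $\gtrsim b^{n(t-s)}$ (Lemma \ref{larger}), which beats $\min_QP_n(Q)\ge b^{n(\gamma_2+\theta)}$ and the correlation count $f(n,\epsilon)\le b^{-n(\delta+\theta)}$ precisely when $t<s-\gamma_2-\delta$, so Chebyshev gives $\limsup_n\mathbb{P}\big(J_{n,x}\subset B^{t}(n)\big)=1$ (Lemma \ref{lemt}). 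Note that no zero--one law, coupling, or independence across scales is available under the hypotheses — and none is needed: $\mathbb{P}\big(\bigcup_{n\ge k}\{J_{n,x}\subset B^t(n)\}\big)\ge\limsup_n\mathbb{P}\big(J_{n,x}\subset B^t(n)\big)=1$ for every $k$, and Fubini then converts this pointwise statement into $\mu\big(\limsup_n B^{t}(n)\big)=1$ a.s. At that point Corollary \ref{mainn} (applied to the balls $B_Q\subset Q$, whose $t$-enlargements are the $B_Q^t$) yields $\limsup_n B(n)\in\mathcal{G}^{t}(X)$ a.s., and monotonicity (Proposition \ref{pro}) gives $A\in\mathcal{G}^{s-\gamma_2-\delta}(X)$ a.s. Your outline has the right ingredients (second moment plus the Correlation Condition, then Theorem \ref{main}), but it applies them at the natural scale, where both the full-measure claim and the exponent computation break down; repairing it amounts to replacing your $B_n$ by the enlarged balls, which is the paper's proof.
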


\begin{cor}\label{eslim}
Under the setting in Theorem \ref{limsup}, we have
   \[\max\{0,s-\gamma_2-\delta\}\le \dim_{\rm H}A\le \max\{0,s-\gamma_1\}\quad a.s.\]
\end{cor}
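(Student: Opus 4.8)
\medskip

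The plan is to prove the two inequalities independently; in both directions the heavy lifting has already been done, the lower bound being essentially a restatement of Theorem~\ref{limsup}, and the upper bound a short first--moment covering estimate that uses only the index $\gamma_1$ (neither the Correlation Condition, nor $\gamma_2$, $\delta$).

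For the lower bound, if $s-\gamma_2-\delta\le 0$ there is nothing to show, so assume $\gamma_2+\delta<s$. Theorem~\ref{limsup} then gives $A\in\mathcal{G}^{s-\gamma_2-\delta}(X)$ a.s. Since $(X,\rho)$ is compact and Ahlfors $s$-regular, $\tau:=\dim_{\rm H}X=s$, so $\mu$ is Ahlfors $\tau$-regular and Proposition~\ref{pro}(4) applies, giving $\dim_{\rm H}F\ge s-\gamma_2-\delta$ for every $F\in\mathcal{G}^{s-\gamma_2-\delta}(X)$. Hence $\dim_{\rm H}A\ge s-\gamma_2-\delta$ a.s., and together with the trivial bound $\dim_{\rm H}A\ge 0$ this yields $\dim_{\rm H}A\ge\max\{0,s-\gamma_2-\delta\}$ a.s.

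For the upper bound, fix $\epsilon>0$ and take any $u>\max\{0,s-\gamma_1\}+\epsilon$; I will show $\mathcal{H}^{u}(A)=0$ a.s., which (intersecting over a countable sequence $u\downarrow\max\{0,s-\gamma_1\}$) gives $\dim_{\rm H}A\le\max\{0,s-\gamma_1\}$ a.s. For each $m\ge1$ the family $\{Q\in\mathcal{Q}_n:\ n\ge m,\ Z_n(Q)=1\}$ covers $A=\limsup_n A(n)$ and consists of sets of diameter at most $2c_1'b^{n}\le 2c_1'b^{m}$ by~\eqref{cube}, so $\mathcal{H}^{u}_{2c_1'b^{m}}(A)\le\sum_{n\ge m}\sum_{Q\in\mathcal{Q}_n:\,Z_n(Q)=1}(\diam Q)^{u}=:X_m$. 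By~\eqref{22}, $\max_{Q\in\mathcal{Q}_n}P_n(Q)\le b^{n(\gamma_1-\epsilon)}$ for all large $n$, and by Proposition~\ref{num}(1), $\#\mathcal{Q}_n\le Cc_1^{-s}b^{-ns}$; hence for $m$ large
\[
\mathbb{E}[X_m]=\sum_{n\ge m}\sum_{Q\in\mathcal{Q}_n}P_n(Q)(\diam Q)^{u}\le C'\sum_{n\ge m}b^{n(u+\gamma_1-\epsilon-s)},
\]
which tends to $0$ as $m\to\infty$ because $u+\gamma_1-\epsilon-s>0$. Since $\mathcal{H}^{u}(A)=\lim_{m\to\infty}\mathcal{H}^{u}_{2c_1'b^{m}}(A)\le\liminf_{m\to\infty}X_m$, Fatou's lemma gives $\mathbb{E}[\mathcal{H}^{u}(A)]\le\liminf_m\mathbb{E}[X_m]=0$, so $\mathcal{H}^{u}(A)=0$ a.s.

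No genuine obstacle arises in the corollary itself: both parts are short, and the only points to watch are verifying $\dim_{\rm H}X=s$ so that Proposition~\ref{pro}(4) is applied with the correct index, and reading off from the $\limsup$ in~\eqref{22} that $\max_QP_n(Q)\le b^{n(\gamma_1-\epsilon)}$ for \emph{all} large $n$ (not merely along a subsequence), which is exactly what makes the geometric tail sum summable. The substantive work sits entirely in Theorem~\ref{limsup} --- which reduces, via Corollary~\ref{mainn}, to an a.s. covering statement for suitably dilated balls placed inside the sets $A(n)$ --- and that has been assumed here.
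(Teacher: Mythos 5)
Your proposal is correct and follows essentially the same route as the paper: the lower bound comes from Theorem \ref{limsup} together with Proposition \ref{pro}(4) (using $\dim_{\rm H}X=s$), and the upper bound is the same first-moment covering estimate over the cubes with $Z_n(Q)=1$, combining the count $\#\mathcal{Q}_n\le Cc_1^{-s}b^{-ns}$ with the eventual bound $\max_Q P_n(Q)\le b^{n(\gamma_1-\epsilon)}$ from \eqref{22}. Your use of Fatou's lemma to conclude $\mathcal{H}^u(A)=0$ a.s.\ is a slightly cleaner finish than the paper's bound on $\mathbb{E}(\mathcal{H}^t_{\delta}(A))$, and your single parameter $u>\max\{0,s-\gamma_1\}+\epsilon$ absorbs the paper's two cases $s\ge\gamma_1$ and $\gamma_1>s$, but these are cosmetic differences.
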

\begin{rem}
We can also use the hitting probability of limsup random fractals to get the estimation on Hausdorff dimension in Corollary \ref{eslim}.  For
instance, let $A$ be a limsup random fratcal in $\mathbb{T}$ with $\gamma_1=\gamma_2<1$, $\delta=0$, and if $P_n(Q)$ is independent of $Q$, Khoshnevisan, Peres and Xiao \cite{kpx} showed that $\dim_{\rm H}A=1-\gamma_1$ a.s.  For more related research, see \cite{hcl, HLX}.
\end{rem}
Before proving  Theorem \ref{limsup}, we give some lemmas. For $n\ge1$ and $Q\in\Q_n$, recalling  (\ref{cube}), there is some $x_Q\in Q$ such that 
\[B(x_Q,c_1b^n)\subset Q\subset \overline{B}(x_Q,c'_1b^n),\]
and here $c_1=\frac{1}{2}-\frac{b}{1-b}$ and $c'_1=\frac{1}{1-b}$.
\begin{lem}\label{larger}
	Let $C$ and $c_1'$ be the constants in (\ref{eqah}) and (\ref{cube}) respectively. Let $B=B(x_B,r)$ be a ball with $r>0$ and
	\[\Q_n(B)=\{Q\in\Q_n\colon Q\subset B\}.\] 
	Then there exists some $N\ge1$ such that  for all $n\ge N$,  we have
	 $$\#\Q_n(B)\ge C^{-2}(r{c'}_1^{-1}b^{-n}-2)^s.$$
\end{lem}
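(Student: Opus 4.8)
The plan is to count the generalized dyadic cubes $Q \in \Q_n$ that are entirely contained in the ball $B = B(x_B, r)$ by comparing measures, exactly in the spirit of the proof of Proposition \ref{num}(1). First I would observe that if $Q \in \Q_n$ satisfies $\rho(x_Q, x_B) + c_1' b^n < r$, then $Q \subset \overline{B}(x_Q, c_1' b^n) \subset B$, so in particular every cube whose center lies in $B(x_B, r - c_1' b^n)$ contributes to $\Q_n(B)$, provided $n$ is large enough that $c_1' b^n < r$ (this is where the threshold $N$ comes from, chosen so that $c_1' b^N < r$; since $b < 1/3$ we have $b^n \to 0$). A cleaner route: the cubes $\{Q : Q \subset B(x_B, r - c_1' b^n)\}$ cover $B(x_B, r - c_1' b^n)$ minus the cubes that stick out, but since each $Q \in \Q_n$ lies in $\overline{B}(x_Q, c_1' b^n)$, the union of all $Q \in \Q_n$ meeting $B(x_B, r - 2c_1' b^n)$ contains that smaller ball and is contained in $B$. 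So I would set $r' = r - 2c_1' b^n$ (assume $n \ge N$ large enough that $r' > 0$) and let $\mathcal{S} = \{Q \in \Q_n : Q \cap B(x_B, r') \ne \varnothing\}$; then $B(x_B, r') \subset \bigcup_{Q \in \mathcal{S}} Q \subset B(x_B, r' + 2c_1' b^n) = B$, so $\mathcal{S} \subset \Q_n(B)$.

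Next I would estimate $\#\mathcal{S}$ from below by the measure comparison: since the cubes in $\Q_n$ are pairwise disjoint and cover $X$, and each $Q \in \Q_n$ satisfies $\mu(Q) \le \mu(\overline{B}(x_Q, c_1' b^n)) \le C (c_1' b^n)^s$ by Ahlfors regularity and (\ref{cube}), we get
\[
C^{-1} (r')^s \le \mu\big(B(x_B, r')\big) \le \sum_{Q \in \mathcal{S}} \mu(Q) \le \#\mathcal{S} \cdot C (c_1' b^n)^s,
\]
hence $\#\mathcal{S} \ge C^{-2} (c_1')^{-s} b^{-ns} (r')^s = C^{-2} \big(r (c_1')^{-1} b^{-n} - 2\big)^s$, which is the claimed bound. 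Since $\mathcal{S} \subset \Q_n(B)$, this finishes the proof. The choice of $N$ is simply any integer with $2 c_1' b^N < r$, so that $r' = r - 2 c_1' b^n > 0$ for all $n \ge N$.

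The only mild subtlety — and the one place to be careful rather than a genuine obstacle — is the containment $\bigcup_{Q \in \mathcal{S}} Q \subset B$: one needs that a cube meeting $B(x_B, r')$ has its center within $r' + c_1' b^n$ of $x_B$ (since $Q \ni$ some point of $B(x_B,r')$ and $Q \subset \overline{B}(x_Q, c_1' b^n)$, the triangle inequality gives $\rho(x_Q, x_B) < r' + c_1' b^n$), and then $Q \subset \overline{B}(x_Q, c_1' b^n) \subset B(x_B, r' + 2c_1' b^n) = B(x_B, r) = B$. Everything else is a direct transcription of the packing/counting argument already used for Proposition \ref{num}, so no new ideas are required.
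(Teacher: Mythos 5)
Your proposal is correct and follows essentially the same route as the paper: the paper argues the contrapositive (any cube not contained in $B$ must miss $B(x_B,r-2c_1'b^n)$, so that smaller ball is covered by cubes of $\Q_n(B)$ alone), while you argue directly that every cube meeting $B(x_B,r-2c_1'b^n)$ lies in $B$; both then conclude with the identical measure comparison $C^{-1}(r-2c_1'b^n)^s\le\#\Q_n(B)\,C(c_1'b^n)^s$ and the same choice of $N$ with $2c_1'b^N<r$.
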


\begin{proof}
	For $Q\in\Q_n(B)$, there exists $x_Q\in Q$ such that
	\[Q\subset \overline{B}(x_Q,c_1'b^n).\]
 For $r>0$, there exists some $N\ge1$ such that  for all $n\ge N$,  we have $r>2c_1'b^n$. Then for $Q\in\Q_n$, observe that if $Q\notin\Q_n(B)$, then $Q\cap B(x_B,r-2c_1'b^n)=\varnothing$, and we deduce that
	\[B(x_B,r-2c_1'b^n)\subset\bigcup_{Q\in\Q_n(B)}Q\cup \bigcup_{Q\notin\Q_n(B)\atop Q\cap B\ne\varnothing}Q=\bigcup_{Q\in\Q_n(B)}Q.\]
	Hence
	\begin{equation*}
	\begin{split}
	C^{-1}(r-2c_1'b^n)^s&\le \mu\big(B(x_B,r-2c_1'b^n)\big)\le\mu\Big(\bigcup_{Q\in\Q_n(B)}Q\Big)\le\mu\Big(\bigcup_{Q\in\Q_n(B)}\overline{B}(x_Q,c_1'b^n)\Big)\\
&\le\#\Q_n(B)C(c_1'b^n)^s,
\end{split}
\end{equation*}
which derives that 
    \[\#\Q_n(B)\ge\frac{(r-2c_1'b^n)^s}{C^2(c_1'b^n)^s}\quad(\forall n\ge N).\]
\end{proof}

Recall the notation $B^t(x,r)=B(x,r^{t/s})$.
 \begin{lem}\label{lemt}
	Suppose that $\gamma_2+\delta<s$. For $t>0$ with $t<s-\gamma_2-\delta$, let 
	$$B^{t}(n)=\bigcup_{Q\in\mathcal{Q}_n\atop Z_n(Q)=1}B_Q^{t},$$
	where $B_Q=B(x_Q,c_1b^n)\subset Q$. Given $x\in X$, let $J_{n,x}$ be the unique dyadic cube in ${\Q_n}$ containing $x$.
	Then for $k\ge1$, we have
	\[\P\Big(\bigcup_{n\ge k}\{J_{n,x}\subset B^{t}(n)\}\Big)=1.\]
\end{lem}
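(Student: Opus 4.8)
The plan is to fix $x\in X$ and show that $\P\big(J_{n,x}\subset B^{t}(n)\big)\to1$ as $n\to\infty$. This is already enough: for every $k\ge1$ and every $m\ge k$ the event $\{J_{m,x}\subset B^t(m)\}$ is contained in $\bigcup_{n\ge k}\{J_{n,x}\subset B^t(n)\}$, so
\[\P\Big(\bigcup_{n\ge k}\{J_{n,x}\subset B^t(n)\}\Big)\ \ge\ \sup_{n\ge k}\P\big(J_{n,x}\subset B^t(n)\big)\ =\ 1,\]
and in particular no control of correlations between different scales is needed.

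First I would carry out a geometric reduction. Write $y=x_{J_{n,x}}$ for the centre attached to the cube $J_{n,x}$ and set $r_n=(c_1b^n)^{t/s}$, so that $B_Q^{t}=B(x_Q,r_n)$ for every $Q\in\Q_n$. Because $t<s$ one has $r_n/b^n\to\infty$, hence $c_1'b^n<r_n/2$ for all large $n$; combining $J_{n,x}\subset\overline B(y,c_1'b^n)$ (property (3) of Theorem \ref{thm:nest}) with the triangle inequality shows that if some $Q$ in
\[\mathcal D_n:=\{Q\in\Q_n\colon x_Q\in B(y,r_n/2)\}\]
satisfies $Z_n(Q)=1$, then $J_{n,x}\subset B_Q^{t}\subset B^{t}(n)$. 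So it suffices to prove $\P(\exists\,Q\in\mathcal D_n\colon Z_n(Q)=1)\to1$. An Ahlfors-regularity count entirely parallel to Lemma \ref{larger} (the balls $B(x_Q,c_1b^n)\subset Q$ with $Q\in\mathcal D_n$ are pairwise disjoint, while every cube of $\Q_n$ meeting $B(y,r_n/4)$ lies in $\mathcal D_n$) gives an absolute constant $c>0$ with $\#\mathcal D_n\ge c\,b^{-n(s-t)}$ for $n$ large.

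The heart of the argument is a second-moment estimate. Put $S_n=\#\{Q\in\mathcal D_n\colon Z_n(Q)=1\}$ and fix $\epsilon>0$ with $2\epsilon<s-t-\gamma_2-\delta$. By the definition of $\gamma_2$ in \eqref{23}, $P_n(Q)\ge b^{(\gamma_2+\epsilon)n}$ for every $Q\in\Q_n$ once $n$ is large, so $\mathbb E S_n\ge c\,b^{-n(s-t-\gamma_2-\epsilon)}\to\infty$. For the second moment, $\mathbb E S_n^{2}=\mathbb E S_n+\sum_{Q\ne Q'}\P(Z_n(Q)=1,Z_n(Q')=1)$ with $Q,Q'$ ranging over $\mathcal D_n$, and I split the double sum according to whether $\mathrm{Cov}(Z_n(Q),Z_n(Q'))<\epsilon P_n(Q)P_n(Q')$ or not. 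The first group contributes at most $(1+\epsilon)\big(\sum_{Q}P_n(Q)\big)^2=(1+\epsilon)(\mathbb E S_n)^2$; for the second, using $\P(Z_n(Q)=1,Z_n(Q')=1)\le\sqrt{P_n(Q)P_n(Q')}\le\tfrac12(P_n(Q)+P_n(Q'))$ together with the symmetry of the correlation relation and the bound $f(n,\epsilon)$ on the number of partners of each cube, one gets a contribution at most $f(n,\epsilon)\,\mathbb E S_n$. Hence $\mathbb E S_n^{2}\le(1+f(n,\epsilon))\,\mathbb E S_n+(1+\epsilon)(\mathbb E S_n)^2$, and the Paley--Zygmund (Cauchy--Schwarz) inequality yields
\[\P(S_n>0)\ \ge\ \frac{(\mathbb E S_n)^2}{\mathbb E S_n^{2}}\ \ge\ \Big(\frac{1+f(n,\epsilon)}{\mathbb E S_n}+1+\epsilon\Big)^{-1}.\]
By the Correlation Condition \eqref{ast}, $f(n,\epsilon)\le b^{-(\delta+\epsilon)n}$ for $n$ large, so $f(n,\epsilon)/\mathbb E S_n\le c^{-1}\,b^{\,n(s-t-\gamma_2-\delta-2\epsilon)}\to0$, and therefore $\liminf_{n\to\infty}\P(S_n>0)\ge(1+\epsilon)^{-1}$. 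Since the Correlation Condition holds for every $\epsilon>0$, letting $\epsilon\downarrow0$ gives $\P(S_n>0)\to1$, hence $\P(J_{n,x}\subset B^t(n))\to1$, which completes the proof.

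The main obstacle is the balancing in the last display: the number of correlated cube pairs, governed by $\delta$, must be negligible compared with $\mathbb E S_n$, which in turn is controlled from below by $\gamma_2$ and the cardinality $b^{-n(s-t)}$ of $\mathcal D_n$; this is exactly where the hypothesis $t<s-\gamma_2-\delta$ enters, and one has to check that the error exponents $\gamma_2+\epsilon$ and $\delta+\epsilon$ coming from the two $\limsup$-type definitions can be absorbed with a single small $\epsilon$. The only other delicate point is the geometric step, where $t<s$ is needed so that the dilated balls $B_Q^{t}$ are genuinely larger than the level-$n$ cubes.
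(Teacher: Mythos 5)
Your overall strategy is the same as the paper's: count the cubes $Q\in\Q_n$ near $x$ whose enlarged balls $B_Q^t$ contain $J_{n,x}$ (your $\mathcal D_n$ plays the role of the paper's $\mathcal B_n(x)$, with the same Ahlfors-regular count $\asymp b^{-n(s-t)}$), and run a second-moment argument on $S_n$ using the Correlation Condition; whether one phrases the last step as Chebyshev for $\P(S_n=0)$ or Paley--Zygmund for $\P(S_n>0)$ is immaterial, and your symmetric treatment of the ``bad'' pairs via $\P(Z_n(Q)=Z_n(Q')=1)\le\frac12(P_n(Q)+P_n(Q'))$ is a correct variant of the paper's bound $\mathrm{Cov}\le P_n(Q)$.

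There is, however, one genuine error: the step ``by the definition of $\gamma_2$ in (\ref{23}), $P_n(Q)\ge b^{(\gamma_2+\epsilon)n}$ for every $Q\in\Q_n$ once $n$ is large'' is not what (\ref{23}) gives. Writing $\min_{Q\in\Q_n}P_n(Q)=b^{e_n}$, the definition $\gamma_2=-\limsup_n\frac1n\min_Q\log_{b^{-1}}P_n(Q)$ says $\gamma_2=\liminf_n e_n/n$, so the inequality $\min_Q P_n(Q)\ge b^{n(\gamma_2+\epsilon)}$ is guaranteed only for \emph{infinitely many} $n$ (a $\liminf$ gives an eventual bound in the other direction only); nothing prevents $\min_Q P_n(Q)$ from being super-exponentially small along another subsequence. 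Consequently your claim that $\mathbb E S_n\to\infty$ and hence $\P(J_{n,x}\subset B^t(n))\to 1$ along the full sequence is unjustified. The damage is localized: your opening reduction only uses $\P\big(\bigcup_{n\ge k}\{J_{n,x}\subset B^t(n)\}\big)\ge\sup_{n\ge k}\P(J_{n,x}\subset B^t(n))$, so it suffices to run your estimate along the subsequence $\mathcal N$ of those $n$ for which $\min_Q P_n(Q)\ge b^{n(\gamma_2+\epsilon)}$ (the bounds on $f(n,\epsilon)$ and $\#\mathcal D_n$ are eventual, hence valid on $\mathcal N$), obtaining $\limsup_n\P(J_{n,x}\subset B^t(n))=1$; this is exactly the subsequence device the paper uses, and it yields the lemma. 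With that correction (state the conclusion as a $\limsup$, not a limit), your proof is complete and essentially identical to the paper's.
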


\begin{proof}
	Given $x\in X$, there is a $x'\in J_{n,x}$ such that
	\[B(x',c_1b^n)\subset J_{n,x}.\]
    Let
	\[\mathcal{B}_n(x)=\big\{J\in\mathcal{Q}_n\colon J\subset B(x',(c_1b^n)^{t/s})\big\}.\]
	By Lemma \ref{larger}, for $n$ large enough, we have 
	\be\label{bn}
	\#\mathcal{B}_n(x)\ge \frac{1}{C^2}\Big(\frac{c_1^{t/s}}{c_1'}b^{n(t/s-1)}-2\Big)^s.
	\ee
	Define $M_n(x)=\sum_{J\in\B_n(x)}Z_n(J)$, and note that
	\[\big\{ J_{n,x}\subset B^{t}(n) \big\}\supset\big\{Z_n(J)=1{\rm ~for ~some~}J\in \B_n(x)\big\} =\big\{M_n(x)>0\big\}.\]
	 From Chebyshev's inequality, we obtain
	\begin{equation*}
		\P(M_n(x)=0)\le \frac{{\rm  Var}(M_n(x))}{(\mathbb{E}(M_n(x)))^2}.
	\end{equation*}
	Since ${\rm Cov}(Z_n(J),Z_n(J'))\le\mathbb{E}(Z_n(J)Z_n(J'))\le P_n(J)$, we have
	\begin{equation}
		\begin{split}
			{\rm Var}(M_n(x))&=\sum_{J\in\B_n(x)}\sum_{J'\in\B_n(x)}{\rm Cov}(Z_n(J),Z_n(J'))\\
			&=\sum_{J\in\B_n(x)}\Big(\sum_{J'\in\G_n(J)} {\rm Cov}(Z_n(J),Z_n(J'))+\sum_{J'\in\mathcal{T}_n(J)}{\rm Cov}(Z_n(J),Z_n(J'))\Big)\\
			&\le \sum_{J\in\B_n(x)}\Big(\sum_{J'\in\G_n(J)} \epsilon P_n(J)P_n(J')+\sum_{J'\in\mathcal{T}_n(J)}P_n(J)\Big)\\
			&\le \epsilon\Big(\sum_{J\in\B_n(x)}P_n(J)\Big)\Big(\sum_{J'\in\G_n(J)}P_n(J')\Big)+\max_{J\in\B_n(x)}\#\mathcal{T}_n(J)\Big(\sum_{J\in\B_n(x)}P_n(J)\Big),
		\end{split}
	\end{equation}
	where
    \begin{equation*}
    \begin{split}
     \G_n(J)&=\{J'\in\B_n(x)\colon {\rm Cov}(Z_n(J),Z_n(J'))<\epsilon P_n(J)P_n(J')\},\\
    \mathcal{T}_n(J)&=\B_n(x)\setminus\G_n(J). 
    \end{split}
    \end{equation*}
     Recalling the notation of the Correlation Condition, we obtain
    \[{\rm Var}(M_n(x))\le \epsilon\Big(\sum_{J\in\B_n(x)}P_n(J)\Big)^2+f(n,\epsilon)\Big(\sum_{J\in\B_n(x)}P_n(J)\Big).\]
    Note that $\mathbb{E}(M_n(x))=\sum_{J\in\B_n(x)}P_n(J)\ge \#\B_n(x)\big(\min_{Q\in\mathcal{Q}_n}P_n(Q)\big)$, we derive that
\begin{equation}
   \begin{split}
	\P(M_n(x)=0)&\le \frac{\epsilon(\mathbb{E}(M_n(x)))^2+f(n,\epsilon)(\mathbb{E}(M_n(x)))}{(\mathbb{E}(M_n(x)))^2}\\
	&=\epsilon+\frac{f(n,\epsilon)}{\mathbb{E}(M_n(x))}\\
	&\leq \epsilon+\frac{f(n,\epsilon)}{\#\B_n(x)\big(\min_{Q\in\mathcal{Q}_n}P_n(Q)\big)}.
   \end{split}
\end{equation}
    From (\ref{ast}) and (\ref{bn}), for $\theta>0$ with $2\theta<s-\delta-\gamma_2-t$, we get
    \[f(n,\epsilon)\le b^{-n(\delta+\theta)}\quad \text{and}\quad \#\mathcal{B}_n(x)\ge c_5b^{n(t-s)}\]
    for $n$ large enough, where $c_5>0$ is an absolute constant, and from (\ref{23}), we have
    \[\min_{Q\in\mathcal{Q}_n}P_n(Q)\ge b^{n(\gamma_2+\theta)}\]
    for infinitely many $n$, denoted by $\mathcal{N}$.
    Then
    \[\limsup_{ n\in \mathcal{N}\atop n\to\infty}\frac{f(n,\epsilon)}{\#\B_{n}(x)\min_{Q\in\mathcal{Q}_{n}}P_{n}(Q)}\le \limsup_{ n\in \mathcal{N}\atop n\to\infty}c_5^{-1}b^{-n(2\theta+\delta+\gamma_2-s+t)}=0,\]
    which implies $\limsup\limits_{ n\in \mathcal{N}\atop n\to\infty}\P(M_n(x)=0)=0.$
    Therefore
    \[\limsup_{ n\in \mathcal{N}\atop n\to\infty}\P(M_n(x)>0)=1.\]
    So we show that
    \[\limsup_{n\to\infty}\P\big( J_{n,x}\subset B^{t}(n) \big)=1.\]
    Hence for $k\ge 1$,
    \[\mathbb{P}\Big(\bigcup_{n= k}^{\infty}\{J_{n,x}\subset B^{t}(n)\}\Big)\ge \limsup_{n\to\infty}\P\big(J_{n,x}\subset B^{t}(n)\big)= 1.\]
\end{proof}

\noindent\textbf{Proof of Theorem \ref{limsup}}: 
Let $x\in X$ and $t>0$ with $t<s-\gamma_2-\delta$.
By Lemma \ref{lemt}, we have $\P\Big(\bigcup_{n= k}^{\infty}\{J_{n,x}\subset B^{t}(n)\}\Big)=1$ for any $k\ge1$. Note that
\[\Big\{x\in\bigcup_{n=k}^{\infty}B^{t}(n)\Big\}\supset \big\{\exists~ n\ge k {\rm~such ~that~} J_{n,x}\subset B^{t}(n)\big\}.\]
This yields that $\P\big\{x\in\bigcup_{n=k}^{\infty}B^{t}(n)\big\}=1$. Write the set $F_{k}=\bigcup_{n=k}^{\infty}B^{t}(n)$, then 
\[\int d\mu(x)\int\chi_{\{x\in F_k\}}d\P(\omega) =1.   \]
Using Fubini's Theorem, we obtain
\[\int d\P(\omega)\int\chi_{F_k}(x)d\mu(x) =1,   \]
implying that $\mu(F_k)=1$ a.s. for any $k\ge1$. Hence $\mu\Big(\limsup\limits_{n\to\infty}B^{t}(n)\Big)=1$ a.s.

By Corollary \ref{mainn}, for $t>0$ with $t<s-\gamma_2-\delta$, we have $\limsup\limits_{n\to\infty}B(n)\in \mathcal{G}^{t}(X)$ a.s., which implies $\limsup\limits_{n\to\infty}B(n)\in \mathcal{G}^{s-\gamma_2-\delta}(X)$ a.s. Hence by Proposition \ref{pro}, $\limsup\limits_{n\to\infty}A(n)\in \mathcal{G}^{s-\gamma_2-\delta}(X)$ a.s.    $\hfill\square$

\noindent\textbf{Proof of Corollary \ref{eslim}}:
If $s\ge\gamma_1$, let $t>s-\gamma_1$. For $\delta>0$, there exists $k_0\ge1$ such that $2c_1'b^{k_0}\le \delta<2c_1'b^{k_0-1}$. Since
 $$A\subset \bigcup_{n= k}^{\infty}\bigcup_{Q\in\Q_n\atop Z_n(Q)=1}Q^o$$
  for all $k\ge1$, we have 
\[\mathcal{H}^t_{\delta}(A)\le \sum_{n\ge k_0}\sum_{Q\in\Q_n}(\diam Q)^tZ_n(Q).\]
Then
\[ \mathbb{E}(\mathcal{H}^t_{\delta}(A))\le \sum_{n\ge k_0}\sum_{Q\in\Q_n}(\diam Q)^tP_n(Q).  \]
Considering $\theta>0$ with $\gamma_1>\theta>s-t$, by (\ref{22}), for $n$ large enough, 
\[\max_{Q\in\Q_n}P_n(Q)\le b^{n\theta},\]
which implies  that 
\be\label{hau}
\begin{split}
 \mathbb{E}(\mathcal{H}^t_{\delta}(A))&\le \sum_{n\ge k_0}\#\Q_n(2c'_1b^n)^tb^{n\theta}\\
 &\le Cc_1^{-s}(2c'_1)^t\sum_{n\ge k_0}b^{n(t-s+\theta)}<\infty, 
 \end{split}
 \ee
giving $\dim_{\rm H} A \le  s-\gamma_1$ a.s.  If $\gamma_1>s$, taking $t>0$ and $\theta=s$ in (\ref{hau}), we have $ \mathbb{E}(\mathcal{H}^t_{\delta}(A))\le Cc_1^{-s}(2c'_1)^t\sum_{n\ge k_0}b^{nt}<\infty$ a.s.,  which implies $\dim_{\rm H} A \le 0$ a.s.

When $s>\gamma_2+\delta$, from Theorem \ref{limsup}, we have $A\in\mathcal{G}^{s-\gamma_2-\delta}(X)$ a.s. Then we obtain from Proposition \ref{pro} (4) that $\dim_{\rm H} A \ge  s-\gamma_2-\delta$ a.s.   $\hfill\square$

\subsection{Random covering sets}
\ 
\newline
\\
\indent
Durand \cite{du10} showed that random covering sets in $\mathbb{T}$, defined as limsup sets of a sequence of balls whose centers are independent and uniformly distributed  random variables, have large intersection property.  In this subsection, we consider random covering sets in $(X,\mathscr{B},\mu,\rho)$ with weak dependence condition.
 Let $\{\xi_n\}_{n\ge1}$ be a stationary process on a probability space $(\Omega,\mathcal{B},\mathbb{P})$ having $\mu$ as probability law.
\begin{dfn}\label{emp}
We say that $\{\xi_n\}_{n\ge1}$ is exponentially mixing if for any $n\ge1$, there exist two constants $c>0$ and $0<\gamma<1$ such that  
\[\big|\mathbb{P}(\xi_1\in A|D)-\mathbb{P}(\xi_1\in A)\big|\le c\gamma^{n}\]
holds for any ball $A\subset X$ and $D\in\mathcal{B}_{n+1}$, where $\mathcal{B}_{n+1}$ is the sub-$\sigma$-field generated by $\{\xi_{n+i}\}_{i\ge1}$.
\end{dfn}

Let $\{r_n\}_{n\ge1}$ be a sequence of positive real numbers decreasing to zero. 
The $random$ $covering$ $set$ is defined as
\[E\colon=\limsup_{n\to\infty}B(\xi_n,r_n)=\{y\in X\colon y\in B(\xi_n,r_n) ~\text{for~ infinitely ~many}~n\ge1\}.\]
The set $E$ consists of the points which are covered by $\{B(\xi_n,r_n)\}_{n\ge1}$ infinitely often.

\begin{thm}[\cite{HL}]\label{hl}
	Let $\{\xi_n\}_{n\ge1}$ be exponentially mixing and the probability law $\mu$ be Ahlfors $s$-regular. Then we have
	\[\mu\bigl(E\bigr)=
	\begin{cases}
	0&  \text{if $\sum_{n=1}^{\infty}r_n^s<\infty$}\\
	1&  \text{if $\sum_{n=1}^{\infty}r_n^s=\infty$}
	\end{cases} \quad a.s.\]
\end{thm}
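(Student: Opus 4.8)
The plan is to compute the expected measure $\mathbb{E}[\mu(E)]$ by Fubini's theorem and then upgrade a zero--one behaviour of the events $\{y\in E\}$ into an almost sure statement about the random quantity $\mu(E)$. The map $(y,\omega)\mapsto\chi_{\{y\in E(\omega)\}}$ is jointly measurable, being assembled from the continuous metric $\rho$, the measurable maps $\xi_n$, and countably many set operations, so
\[\mathbb{E}\big[\mu(E)\big]=\int_X\mathbb{P}(y\in E)\,d\mu(y),\qquad \mathbb{P}(y\in E)=\mathbb{P}\big(\xi_n\in B(y,r_n)\text{ for infinitely many }n\big),\]
where we used that $y\in B(\xi_n,r_n)\iff\xi_n\in B(y,r_n)$. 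Fix $y\in X$ and set $A_n=A_n(y)=\{\xi_n\in B(y,r_n)\}$. By stationarity $\mathbb{P}(A_n)=\mu(B(y,r_n))$, and Ahlfors $s$-regularity gives $C^{-1}r_n^s\le\mathbb{P}(A_n)\le Cr_n^s$ for every $n$ with $r_n\le\diam X$, hence for all large $n$, with $C$ independent of $y$. Thus $\sum_n\mathbb{P}(A_n(y))$ and $\sum_n r_n^s$ converge or diverge simultaneously, uniformly in $y$.

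If $\sum_n r_n^s<\infty$, then $\sum_n\mathbb{P}(A_n(y))<\infty$ for every $y$, so the first Borel--Cantelli lemma (which requires no independence) yields $\mathbb{P}(y\in E)=\mathbb{P}(\limsup_n A_n(y))=0$ for every $y$; hence $\mathbb{E}[\mu(E)]=0$ and, $\mu(E)$ being nonnegative, $\mu(E)=0$ almost surely.

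If $\sum_n r_n^s=\infty$, then $S_N:=\sum_{n=1}^N\mathbb{P}(A_n(y))\to\infty$ for every $y$, and we aim at $\mathbb{P}(\limsup_n A_n(y))=1$ through the Kochen--Stone inequality
\[\mathbb{P}\Big(\limsup_n A_n\Big)\ \ge\ \limsup_{N\to\infty}\frac{S_N^2}{\sum_{n,m=1}^N\mathbb{P}(A_n\cap A_m)}.\]
Here the exponentially mixing hypothesis controls the off-diagonal terms. For $n<m$ one has $A_m\in\sigma(\xi_m)\subset\mathcal{B}_m$ and $A_n\in\sigma(\xi_n)$; stationarity together with a standard argument (testing against the $\mathcal{B}_m$-sets on which the conditional probability deviates) converts Definition \ref{emp} into the almost sure bound $\big|\mathbb{P}(\xi_n\in B(y,r_n)\mid\mathcal{B}_m)-\mathbb{P}(A_n)\big|\le c\gamma^{m-n}$, whence
\[\mathbb{P}(A_n\cap A_m)=\mathbb{E}\big[\chi_{A_m}\,\mathbb{P}(A_n\mid\mathcal{B}_m)\big]\le\mathbb{P}(A_n)\mathbb{P}(A_m)+c\gamma^{m-n}\mathbb{P}(A_m).\]
Summing, $\sum_{1\le n<m\le N}c\gamma^{m-n}\mathbb{P}(A_m)\le\tfrac{c\gamma}{1-\gamma}S_N$ and $\sum_{1\le n<m\le N}\mathbb{P}(A_n)\mathbb{P}(A_m)\le\tfrac12 S_N^2$, while the diagonal contributes $S_N$; hence $\sum_{n,m=1}^N\mathbb{P}(A_n\cap A_m)\le S_N^2+C'S_N$ with $C'=1+\tfrac{2c\gamma}{1-\gamma}$. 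Since $S_N\to\infty$, the Kochen--Stone ratio tends to $1$, so $\mathbb{P}(y\in E)=1$ for every $y$; therefore $\mathbb{E}[\mu(E)]=\int_X 1\,d\mu=1$, and because $0\le\mu(E)\le\mu(X)=1$ this forces $\mu(E)=1$ almost surely.

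The main obstacle is the divergent case. One must promote the single-coordinate decorrelation bound of Definition \ref{emp}, stated for a fixed event $D$, to a conditional bound relative to the whole $\sigma$-field $\mathcal{B}_m$; keep careful track of which time-gap produces which power of $\gamma$ (the shift by $n-1$ sends the pair $(\xi_n,\mathcal{B}_m)$ to $(\xi_1,\mathcal{B}_{m-n+1})$, giving exponent $m-n$); and verify that the mixing errors are summable against $\gamma^{m-n}$ so that the Kochen--Stone denominator is $S_N^2\bigl(1+o(1)\bigr)$. Everything else -- the Fubini reduction, the uniform two-sided comparison $\mathbb{P}(A_n)\asymp r_n^s$, and the convergent half via the first Borel--Cantelli lemma -- is routine.
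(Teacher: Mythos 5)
Your proof is correct: the paper itself states this theorem as an imported result from \cite{HL} without giving a proof, and your route --- Fubini to reduce to $\mathbb{P}(y\in E)$ for fixed $y$, the first Borel--Cantelli lemma in the convergent case, and Kochen--Stone with the pairwise bound $\mathbb{P}(A_n\cap A_m)\le\mathbb{P}(A_n)\mathbb{P}(A_m)+c\gamma^{m-n}\mathbb{P}(A_m)$ extracted from Definition~\ref{emp} via stationarity --- is precisely the standard second-moment argument behind that reference. Two minor remarks: you do not actually need the upgrade to conditioning on the full $\sigma$-field $\mathcal{B}_m$, since applying the definition to the single event $D=A_m\in\mathcal{B}_m$ (which has positive probability by Ahlfors regularity) already yields the correlation bound you use, and the constants $c,\gamma$ in Definition~\ref{emp} must be read as uniform in $n$, which is the intended meaning.
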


\begin{thm}\label{random}
Let $(X,\mathscr{B},\mu,\rho)$ be a compact Ahlfors $s$-regular space $(0<s<\infty)$. Let $\{\xi_n\}_{n\ge1}$ be exponentially mixing and the probability law $\mu$ be Ahlfors $s$-regular, then the random covering set $E\in \mathcal{G}^{s_0}(X)$ a.s., where $s_0=\inf\{t\ge0\colon \sum_{n=1}^{\infty}r_n^t<\infty\}$.
\end{thm}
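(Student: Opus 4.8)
The plan is to obtain Theorem~\ref{random} by combining Corollary~\ref{mainn} with the zero--one law in Theorem~\ref{hl}. The point is that it suffices to verify the hypothesis of Corollary~\ref{mainn} for the balls $B_n=B(\xi_n,r_n)$ and every exponent $t$ strictly below $s_0$: namely, that the thickened limsup set $\limsup_n B(\xi_n,r_n)^t$ has full $\mu$-measure almost surely. We may assume $0<s_0\le s$: if $s_0=0$ there is nothing to prove, while if $s_0>s$ then $\sum_n r_n^s=\infty$, so $\mu(E)=1$ a.s.\ by Theorem~\ref{hl}, and one then reads $\mathcal G^{s_0}(X)$ as $\mathcal G^{s}(X)=\bigcap_{t'<s}\mathcal G^{t'}(X)$, the argument below going through with $s_0$ replaced by $s$.

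First I would fix $t$ with $0<t<s_0$ and set $\tilde r_n=r_n^{t/s}$. By the convention $B^t=B(x,r^{t/s})$ one has $B(\xi_n,r_n)^t=B(\xi_n,\tilde r_n)$, and $\tilde r_n$ decreases to $0$ since $r_n$ does. Since $r_n<1$ for all large $n$, the map $t'\mapsto\sum_n r_n^{t'}$ is non-increasing up to finitely many terms, so $\{t'\ge0\colon\sum_n r_n^{t'}<\infty\}$ is an interval with left endpoint $s_0$; hence $t<s_0$ forces $\sum_n\tilde r_n^{\,s}=\sum_n r_n^{\,t}=\infty$. As $\{\xi_n\}_{n\ge1}$ is exponentially mixing and its law $\mu$ is Ahlfors $s$-regular, Theorem~\ref{hl} applied to the radii $\tilde r_n$ gives
\[
\mu\Big(\limsup_{n\to\infty}B(\xi_n,r_n)^t\Big)=\mu\Big(\limsup_{n\to\infty}B(\xi_n,\tilde r_n)\Big)=1\quad a.s.
\]
Next I would apply Corollary~\ref{mainn} with $B_n=B(\xi_n,r_n)$: the space $X$ is compact Ahlfors $s$-regular, the radii $r_n$ decrease to $0$ so that $\diam B_n\le 2r_n\to0$ (only $\diam B_n\to0$ is actually used in the proof of Theorem~\ref{main}), $t\in(0,s)$, and the displayed identity says $\mu(\limsup_n B_n^t)=1$ a.s.; hence $E=\limsup_n B(\xi_n,r_n)\in\mathcal G^{t}(X)$ a.s.

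To finish, I would pick $t_k\in(0,s_0)$ with $t_k\uparrow s_0$. Each event $\{E\in\mathcal G^{t_k}(X)\}$ has probability $1$, so their countable intersection does as well, and on it $E\in\bigcap_{k\ge1}\mathcal G^{t_k}(X)=\bigcap_{t'<s_0}\mathcal G^{t'}(X)=\mathcal G^{s_0}(X)$, where the first equality uses Proposition~\ref{pro}(1) and the second is the Remark following Proposition~\ref{pro}. This yields $E\in\mathcal G^{s_0}(X)$ almost surely. I do not anticipate a serious difficulty: the heavy lifting was done in Theorem~\ref{main}/Corollary~\ref{mainn} and in the cited Theorem~\ref{hl}. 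The only delicate points are (i) matching the convention $B^t=B(x,r^{t/s})$ so that thickening the balls corresponds to lowering the exponent; (ii) the elementary monotonicity remark guaranteeing $\sum_n r_n^t=\infty$ for every $t<s_0$, which is exactly what puts Theorem~\ref{hl} in its ``full measure'' case; and (iii) the passage from ``$E\in\mathcal G^t(X)$ a.s.\ for each $t<s_0$'' to ``$E\in\mathcal G^{s_0}(X)$ a.s.'', which is legitimate precisely because it is enough to intersect over a countable sequence $t_k\uparrow s_0$ and because $\mathcal G^{s_0}(X)=\bigcap_{t'<s_0}\mathcal G^{t'}(X)$.
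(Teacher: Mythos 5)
Your proposal is correct and follows essentially the same route as the paper: apply Theorem \ref{hl} to the thickened balls $B(\xi_n,r_n^{t/s})$ (using $\sum_n (r_n^{t/s})^s=\sum_n r_n^t=\infty$ for $t<s_0$) and then invoke Corollary \ref{mainn}. The only difference is that you spell out the final passage from ``$E\in\mathcal G^{t}(X)$ a.s.\ for each $t<s_0$'' to ``$E\in\mathcal G^{s_0}(X)$ a.s.'' via a countable sequence $t_k\uparrow s_0$, and the edge cases $s_0=0$ and $s_0>s$, which the paper leaves implicit.
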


\begin{proof}
 For $t<s_0$, by Theorem \ref{hl}, we have 
\[\mu(\limsup_{n\to\infty} B(\xi_n,r_n^{t/s}))=1\quad a.s.,\]
 since $\sum_{n=1}^{\infty}(r_n^{t/s})^s=\infty$. Applying Corollary \ref{mainn}, we finish the proof.
\end{proof}

\subsection{ Limsup sets generated by rectangles}
\ 
\newline
\\
\indent
The mass transference principle from balls to limsup sets generated by rectangles was
established by \cite{wwx} in 2015.  In 2021, Ding \cite{ding} extended the result to product compact metric spaces, and proved that such sets have large intersection property. In this subsection, we use a different method to show the large intersection property of limsup sets generated by rectangles. 

For $1\le i\le d$, let $(X_i, \rho_i)$ be a compact metric spaces equipped with a Borel probability measure $\mu_i$ which is Ahlfors $s_i$-regular, that is, there exists a constant $\delta_i>1$ such that $\delta_i^{-1}r^{s_i}\le\mu_i(B_i(x,r))\le \delta_ir^{s_i}$, here $B_i(x, r)$ is the open ball in $X_i$. We consider the metric space $(\prod_{i=1}^{d}X_i,\rho)$ with the measure $\mu$, where
\[\rho(x,y)= \max_{1\le i\le d}\{\rho_i(x_i, y_i)\}\]
for $x=(x_1,\dots, x_d),~ y=(y_1,\dots, y_d)\in \prod_{i=1}^{d}X_i$ and $\mu$ satisfies that $\mu(\prod_{i=1}^{d}A_i)=\prod_{i=1}^{d}\mu_i(A_i)$ for $A_i$ is a Borel set in $X_i$.
The metric space $(\prod_{i=1}^{d}X_i,\rho)$ is compact.

\begin{thm} 
For $1\le i\le d$, let $\{B_{i}(x_{n,i},r_n)\}_{n\ge1}$ be a sequence of balls in $X_i$ with $r_n\to0$ as $n\to\infty$. Let $\{a_i\}_{1\le i\le d}$ be a sequence of positive numbers with $1 \le a_1 \le\dots\le a_d$. Assume that $\mu(\limsup\limits_{n\to\infty}\prod_{i=1}^{d}B_{i}(x_{n,i},r_n))=1$. 
Then $\limsup\limits_{n\to\infty} \prod_{i=1}^{d}B_{i}(x_{n,i},r_n^{a_i}) \in \mathcal{G}^s(\prod_{i=1}^{d}X_i)$,
where 
\[s = \min_{1\le i\le d}\Big\{ \frac{\sum_{j=1}^d s_j+a_i\sum_{j=1}^is_j-\sum_{j=1}^ia_js_j}{a_i}\Big\}.\]
\end{thm}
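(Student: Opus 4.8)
The plan is to reduce the rectangle statement to Theorem~\ref{main} by choosing appropriate balls $B_n$ and open sets $E_n$ in the product space $X=\prod_{i=1}^d X_i$, and then computing the critical exponent $\lambda$ from Theorem~\ref{main}. First I would set $B_n=\prod_{i=1}^d B_i(x_{n,i},r_n)$, which is precisely a ball of radius $r_n$ in the max-metric $\rho$, so that $\diam B_n\to 0$ and, by hypothesis, $\mu(\limsup_n B_n)=1$. Then I would set $E_n=\prod_{i=1}^d B_i(x_{n,i},r_n^{a_i})$; since $a_i\ge 1$ these are genuinely contained in $B_n$, and they are open. Observe that $\mu$ is Ahlfors regular on $X$ with exponent $\sigma:=\sum_{j=1}^d s_j$, because on a product ball $\prod_i B_i(x_i,r)$ one has $\mu=\prod_i\mu_i$ and each factor is comparable to $r^{s_i}$. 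Thus the hypotheses of Theorem~\ref{main} hold with $s$ replaced by $\sigma$, and the conclusion will be $\limsup_n E_n\in\mathcal{G}^{\lambda}(X)$ with
\[
\lambda=\sup\Big\{t\ge 0\colon \sup_{n\ge 1}\frac{I_t(\mu,E_n)\,\mu(B_n)}{\mu(E_n)^2}<\infty\Big\}.
\]
So the whole theorem comes down to showing that this $\lambda$ equals the stated minimum.

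The main computational step is therefore to estimate $I_t(\mu,E_n)$ for the rectangle $E_n$. Here $E_n$ is itself a product of balls of radii $\rho_{n,i}:=r_n^{a_i}$, which are \emph{not} all equal, so $E_n$ is not a ball in $X$ and Lemma~\ref{leme} does not apply directly; instead I would slice $E_n$ into $\asymp \prod_{i}( \rho_{n,i}/\rho_{n,d})^{s_i}$ many sub-boxes of the smallest sidelength $\rho_{n,d}=r_n^{a_d}$ — or, more cleanly, decompose the integration region $\{\rho(x,y)<\rho_{n,i_0}\text{ for the relevant scale}\}$ dyadically across the $d$ distinct scales $\rho_{n,1}\ge\cdots\ge\rho_{n,d}$ and sum geometric series in each block, exactly as in the proof of Lemma~\ref{leme} but bookkeeping each coordinate separately. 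The outcome is a two-sided estimate of the form $I_t(\mu,E_n)\asymp \prod_{i=1}^d\rho_{n,i}^{s_i}\cdot g_t(\rho_{n,1},\dots,\rho_{n,d})$ where $g_t$ is, up to constants, $\max_{1\le k\le d}\rho_{n,k}^{-t}\prod_{j>k}(\rho_{n,k}/\rho_{n,j})^{s_j}$ (the term coming from the block where $\rho(x,y)\asymp\rho_{n,k}$, when $t$ is large enough that the inner sum diverges toward the small scales). Writing everything as a power of $r_n$ via $\rho_{n,i}=r_n^{a_i}$, the ratio $I_t(\mu,E_n)\mu(B_n)/\mu(E_n)^2$ becomes $r_n$ raised to an exponent that is a piecewise-linear function of $t$; it stays bounded as $r_n\to 0$ exactly when that exponent is $\ge 0$, and the threshold value of $t$ is precisely
\[
\lambda=\min_{1\le i\le d}\frac{\sum_{j=1}^d s_j+a_i\sum_{j=1}^i s_j-\sum_{j=1}^i a_j s_j}{a_i},
\]
since coordinate $i$ contributes the binding constraint when the "critical scale" in the energy integral sits at level $\rho_{n,i}$.

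The step I expect to be the main obstacle is the two-sided energy estimate for the anisotropic box $E_n$: the upper bound requires carefully organizing the dyadic decomposition of $\rho(x,y)^{-t}$ across $d$ genuinely different scales and identifying, as a function of $t$, which scale dominates the sum (the answer changes as $t$ crosses each of the values $\sum_{j>k}s_j$), while the lower bound needs a matching contribution from a single well-chosen annulus. Once this $I_t$ asymptotic is in hand, verifying that the exponent of $r_n$ vanishes at the claimed $\lambda$ is a routine algebraic manipulation: one checks that for $t$ just below the $i$-th candidate value the exponent is nonnegative for all choices of dominant scale, and for $t$ just above it becomes negative for the scale at level $i$, so the supremum defining $\lambda$ is the minimum over $i$ of these candidates. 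Finally, applying Theorem~\ref{main} gives $\limsup_n E_n\in\mathcal{G}^{\lambda}(\prod_i X_i)$, which is the assertion. One should also note for consistency that when all $a_i=1$ the formula collapses to $\lambda=\sum_j s_j=s$, recovering Corollary~\ref{mainn} for the full product space.
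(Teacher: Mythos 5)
Your overall strategy is the same as the paper's: set $B_n=\prod_{i=1}^d B_i(x_{n,i},r_n)$ (a $\rho$-ball of radius $r_n$), $E_n=\prod_{i=1}^d B_i(x_{n,i},r_n^{a_i})\subset B_n$, observe that $\mu$ is Ahlfors regular of exponent $\sigma=\sum_j s_j$ on the product, and feed the ratio $I_t(\mu,E_n)\mu(B_n)/\mu(E_n)^2$ into Theorem~\ref{main}. Two remarks on the reduction: you only need an \emph{upper} bound on $I_t(\mu,E_n)$, valid for each fixed $t<s$, because $\lambda\ge s$ already gives $\limsup_n E_n\in\mathcal{G}^{\lambda}(X)\subset\mathcal{G}^{s}(X)$; the two-sided asymptotic and the exact identification of $\lambda$ with the stated minimum are unnecessary (the paper indeed only proves the upper bound and invokes the proof of Theorem~\ref{main} for each such $t$, which also sidesteps the requirement in Theorem~\ref{main} that the radii be decreasing rather than merely tending to $0$).

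The genuine problem is the key estimate you propose. With $\rho_{n,1}\ge\cdots\ge\rho_{n,d}$ (where $\rho_{n,i}=r_n^{a_i}$), at scale $u\approx\rho_{n,k}$ one has $\mu\{x\in E_n:\rho(x,y)<u\}\approx u^{\sum_{j\le k}s_j}\prod_{j>k}\rho_{n,j}^{s_j}$: the coordinates with the \emph{larger} radii ($j\le k$) are the ones constrained to scale $u$, while the coordinates $j>k$ contribute their full measure. Hence the correct bound is $\phi_t(\mu,E_n,y)\lesssim\max_k\rho_{n,k}^{\sum_{j\le k}s_j-t}\prod_{j>k}\rho_{n,j}^{s_j}$ and $I_t(\mu,E_n)\lesssim\mu(E_n)\max_k\rho_{n,k}^{\sum_{j\le k}s_j-t}\prod_{j>k}\rho_{n,j}^{s_j}$, which is exactly the paper's displayed bound with $k=j$ chosen by $\sum_{i<j}s_i<t\le\sum_{i\le j}s_i$; the resulting exponent of $r_n$ in the ratio is $\sum_i s_i+a_k\sum_{i\le k}s_i-\sum_{i\le k}a_is_i-a_kt$, nonnegative for all $k$ precisely when $t\le s$, which is the algebra behind the stated minimum. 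Your formula $I_t(\mu,E_n)\asymp\prod_i\rho_{n,i}^{s_i}\cdot\max_k\rho_{n,k}^{-t}\prod_{j>k}(\rho_{n,k}/\rho_{n,j})^{s_j}$ has the roles of the large and small scales interchanged (and the normalization off by a factor of $\mu(E_n)$ if you intended the normalized energy), and the threshold you assert does not follow from it: already for $d=2$, $s_1=s_2=1$, $a_1=1<a_2=a$ and $1<t<2$, the true ratio is $\asymp r_n^{1+a-at}$, bounded exactly for $t\le(1+a)/a=s$, whereas your expression gives $\asymp r_n^{1-a-at}$, which is unbounded for every $t>0$ and would prove nothing. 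So the multi-scale decomposition you describe is the right idea (and is what the paper carries out via the sets $F_{1,j}$, $F_{2,j}$), but as written the bookkeeping of which coordinates are active at each scale is reversed, and the claimed identification of the critical exponent is unverified and inconsistent with your own formula.
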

\begin{proof}
We assume that $r_n<1$, for $n\ge1$. Denote $B_n=\prod_{i=1}^{d}B_{i}(x_{n,i},r_n)$ and $R_n=\prod_{i=1}^dB_{i}(x_{n,i},r_n^{a_i})$. 
 Let $x=(x_1,\dots,x_d)$ and $y=(y_1,\dots,y_d)$. For $t<s$ and $y\in R_n$, we have
\begin{equation*}
\phi_t(\mu,R_n,y)=\int_{R_n}\rho(x,y)^{-t}d\mu(x)\le \int_{\prod_{i=1}^dB_{i}(y_i, 2r_n^{a_i})}\rho(x,y)^{-t}d\mu(x).
\end{equation*}
Let $j$ be the integer with $\sum_{i=1}^{j-1}s_i< t\le \sum_{i=1}^j s_i$. Denote
\begin{align*}
	& F_{1,j}=\Big\{x\in \prod_{i=1}^dB_{i}(y_i, 2r_n^{a_i})\colon \max_{1\le i\le j}\rho_i(x_i,y_i)<4r_n^{a_j}\Big\},\\
	& F_{2,j}=\Big\{x\in \prod_{i=1}^dB_{i}(y_i, 2r_n^{a_i})\colon \max_{1\le i\le j-1}\rho_i(x_i,y_i)\ge r_n^{a_j}\Big\}.
\end{align*}
Note that $R_n\subset  F_{1,j}\cup F_{2,j}$. Using Fubini's theorem, we derive that
\begin{equation}\label{dif}
\begin{split}
&\phi_t(\mu,R_n,y)\le \int_{F_{1,j}}\rho(x,y)^{-t}d\mu(x)+\int_{F_{2,j}}\rho(x,y)^{-t}d\mu(x)\\
&\le \int_{F_{1,j}}(\max_{1\le i\le j}\rho_i(x_i,y_i))^{-t}d\mu_1(x_1)\dots d\mu_d(x_d)+\int_{F_{2,j}}(\max_{1\le i\le j-1}\rho_i(x_i,y_i))^{-t}d\mu_1(x_1)\dots d\mu_d(x_d)\\
&\le \beta_1\prod_{i=j+1}^dr_n^{a_is_i}\int_{\prod_{i=1}^jB_{i}(y_i, 4r_n^{a_j})}(\max_{1\le i\le j}\rho_i(x_i,y_i))^{-t}d\mu_1(x_1)\dots d\mu_j(x_j)\\
&+\beta_1\prod_{i=j}^dr_n^{a_is_i}\int_{\prod_{i=1}^{j-1}B_{i}(y_i, 2r_n^{a_i})\setminus \prod_{i=1}^{j-1}B_{i}(y_i, r_n^{a_j})}  (\max_{1\le i\le j-1}\rho_i(x_i,y_i))^{-t}  d\mu_1(x_1)\dots d\mu_{j-1}(x_{j-1})\\
&\le \beta_1'\Big(\prod_{i=j+1}^dr_n^{a_is_i}r_n^{-a_jt}\prod_{i=1}^jr_n^{a_js_i}+\prod_{i=j}^dr_n^{a_is_i}I_2\Big),
\end{split}
\end{equation}
where $\beta_1,~\beta_1'$ are absolute constants. Writting $f=(\max\limits_{1\le i\le j-1}\rho_i(x_i,y_i))^{-t} $, we have
\begin{equation}\label{diff}
\begin{split}
I_2&=\int_{\prod_{i=1}^{j-1}B_{i}(y_i, 2r_n^{a_i})\setminus \prod_{i=1}^{j-1}B_{i}(y_i, r_n^{a_j})} \Big(\int_{0}^fdu\Big)d\mu_1(x_1)\dots d\mu_{j-1}(x_{j-1})\\
&=\int_{\prod_{i=1}^{j-1}B_{i}(y_i, 2r_n^{a_i})\setminus \prod_{i=1}^{j-1}B_{i}(y_i, r_n^{a_j})} r_n^{-a_jt}d\mu_1(x_1)\dots d\mu_{j-1}(x_{j-1})\\
& -\int_{\prod_{i=1}^{j-1}B_{i}(y_i, 2r_n^{a_i})\setminus \prod_{i=1}^{j-1}B_{i}(y_i, r_n^{a_j})} \Big(\int_f^{r_n^{-a_jt}}du\Big)d\mu_1(x_1)\dots d\mu_{j-1}(x_{j-1})\\
&\le r_n^{-a_jt}\Big(\prod_{i=1}^{j-1}\mu_i(B_{i}(y_i, 2r_n^{a_i}))-\prod_{i=1}^{j-1}\mu_i(B_{i}(y_i, r_n^{a_j}))\Big)\\
&- \int_0^{r_n^{-a_jt}}du\int_{\prod_{i=1}^{j-1}B_{i}(y_i, 2r_n^{a_i})\setminus \prod_{i=1}^{j-1}B_{i}(y_i, u^{-1/t})} d\mu_1(x_1)\dots d\mu_{j-1}(x_{j-1})\\
&=-r_n^{-a_jt}\prod_{i=1}^{j-1}\mu_i(B_{i}(y_i, r_n^{a_j}))+\int_0^{r_n^{-a_jt}} \prod_{i=1}^{j-1}\mu_i(B_{i}(y_i, u^{-1/t}))du\\
&\le-r_n^{-a_jt}\prod_{i=1}^{j-1}\delta_i^{-1}r_n^{a_js_i}+\prod_{i=1}^{j-1}\delta_i\int_0^{r_n^{-a_jt}} u^{-\sum_{i=1}^{j-1}s_i/t}du\le \alpha r_n^{-a_jt}\prod_{i=1}^{j-1}r_n^{a_js_i},
\end{split}
\end{equation}
where $\alpha>0$ is an absolute constant. Therefore by (\ref{dif}) and (\ref{diff}), 
 \begin{equation*}
\phi_t(\mu,R_n,y)\le \beta_1'(1+\alpha)\prod_{i=j+1}^dr_n^{a_is_i}r_n^{-a_jt}\prod_{i=1}^jr_n^{a_js_i}.
\end{equation*}
 
Hence there exists an absolute constant $\beta_2>0$ such that
\begin{equation}\label{it}
\begin{split}
\frac{I_t(\mu,R_n)\mu(B_n)}{\mu(R_n)^2}\le \beta_2r_n^{-a_jt+a_j\sum_{i=1}^js_i+\sum_{i=1}^ds_i-\sum_{i=1}^ja_is_i}\le\beta_2.
\end{split}
\end{equation}
From the proof of Theorem \ref{main}, for any $t$ such that $\sup_{n\ge1}\frac{I_t(\mu,R_n)\mu(B_n)}{\mu(R_n)^2}<\infty$, we have $\limsup\limits_{n\to\infty}R_n\in\mathcal{G}^t(\prod_{i=1}^{d}X_i)$. Note that (\ref{it}) holds for any $t<s$, then $\limsup\limits_{n\to\infty}R_n \in \mathcal{G}^s(\prod_{i=1}^{d}X_i)$.

\end{proof}

\subsection*{Acknowledgements}
The work was supported by  NSFC 11671151 and 11871208.

 { }


\begin{thebibliography}{99}
\bibitem{aj}
J. M. Aubry  and S. Jaffard, 
{\it Random wavelet series.} 
Comm. Math. Phys. 227 (2002), no. 3, 483–514.

\bibitem{bv06}
V. Beresnevich and S. Velani, 
{\it A mass transference principle and the Duffin-Schaeffer conjecture for Hausdorff measures.} 
Ann. of Math. (2) 164 (2006), no. 3, 971–992.

\bibitem{bu04}
Y. Bugeaud,
{\it Intersective sets and Diophantine approximation.} 
Michigan Math. J. 52 (2004), no. 3, 667–682.
 
\bibitem{ding}
N. Ding,
{\it Large intersection properties for limsup sets generated by rectangles in compact metric spaces.} 
Fractals 29 (2021), no. 6, 2150137, 13 pp.
  
\bibitem{du07}
A. Durand, 
{\it Sets with large intersection and ubiquity.} 
Math. Proc. Cambridge Philos. Soc. 144 (2008), no. 1, 119–144.

\bibitem{dd}
A. Durand, 
{\it Large intersection properties in Diophantine approximation and dynamical systems.} 
J. Lond. Math. Soc. (2) 79 (2009), no. 2, 377–398. 
 
\bibitem{du09}
A. Durand, 
{\it Singularity sets of L\'evy processes.} 
Probab. Theory Related Fields 143 (2009), no. 3-4, 517–544.

\bibitem{du10}
A. Durand, 
{\it On randomly placed arcs on the circle.}
Recent developments in fractals and related fields, 343–351, 
Appl. Numer. Harmon. Anal., Birkh\"auser Boston, Boston, MA, 2010. 
  
\bibitem{fal94}
K. Falconer,
{\it Sets with large intersection properties.} 
J. London Math. Soc. (2) 49 (1994), no. 2, 267–280.

\bibitem{fp}
D. F\"arm and T. Persson,
{\it Large intersection classes on fractals.} 
Nonlinearity 24 (2011), no. 4, 1291–1309. 

\bibitem{fr20}
J. Fraser,  
Assouad Dimension and Fractal Geometry (Cambridge Tracts in Mathematics). Cambridge University Press, Cambridge, 2020. doi:10.1017/9781108778459

\bibitem{Hein01}
J. Heinonen,
\newblock Lectures on analysis on metric spaces,
\newblock{\em Universitext. Springer-Verlag, New York,} 2001.

\bibitem{hcl}
Z.-N. Hu, W.-C. Cheng and B. Li,
{\it On the hitting probabilities of limsup random fractals.} 
Fractals (2022) doi: 10.1142/S0218348X22500554.

\bibitem{HL} 
Z.-N. Hu and B. Li, 
{\it Random covering sets in metric space with exponentially mixing property.} Statist. Probab. Lett. 168 (2021), Paper No. 108922, 7 pp.
 
\bibitem{HLX}
Z.-N. Hu, B. Li and Y.-M. Xiao,
{\it On the intersection of dynamical covering sets with fractals.} 
Math. Z. (2022) doi:10.1007/s00209-021-02924-2.
 
\bibitem{Krs}
A. K\"aenm\"aki, T. Rajala and V. Suomala,
{\it Existence of doubling measures via generalised nested cubes.} 
Proc. Amer. Math. Soc. 140 (2012), no. 9, 3275–3281.

\bibitem{kpx}
D. Khoshnevisan, Y. Peres, Y.-M. Xiao, 
{\it Limsup random fractals.} 
Electron. J. Probab. 5 (2000), no. 5, 24 pp.
	
\bibitem{maa}
P. Mattila, 
{\it Absolute continuity and uniqueness of measures on metric spaces.}
 Trans. Amer. Math. Soc. 266 (1981), no. 1, 233–242.

\bibitem{ma}
P. Mattila, 
{\it Geometry of sets and measures in Euclidean spaces.} 
Fractals and rectifiability. Cambridge Studies in Advanced Mathematics, 44. Cambridge University Press, Cambridge, 1995.

\bibitem{jens} 
 E. J. McShane, 
 {\it Jensen's inequality.} 
 Bull. Amer. Math. Soc. 43 (1937), no. 8, 521–527.
  

\bibitem{ns}
F. Negreira and E. Sequeira,
{\it Sets with large intersection properties in metric spaces.} 
J. Math. Anal. Appl. 511 (2022), no. 1, Paper No. 126064, 25 pp.

\bibitem{per19}
T. Persson, 
{\it Inhomogeneous potentials, Hausdorff dimension and shrinking targets. }
Ann. H. Lebesgue 2 (2019), 1–37. 

\bibitem{per21}
T. Persson,
{\it A mass transference principle and sets with large intersections.}
arXiv:1911.10399v2

\bibitem{wwx}
B.-W. Wang, J. Wu and J. Xu,
{\it Mass transference principle for limsup sets generated by rectangles. }
Math. Proc. Cambridge Philos. Soc. 158 (2015), no. 3, 419–437. 

\end{thebibliography}
\end{document}